\newcommand{\defi}[1]{{\bf\upshape\sffamily #1}}
\DeclareMathOperator{\ShHom}{\mathscr{H}\text{\kern -3pt {\calligra\large om}}\,}
\renewcommand{\a}{\alpha}
\renewcommand{\b}{\beta}
\newcommand{\bw}{\bigwedge}
\def\HH{{\mathbf H}}
\def\kk{{\mathbf k}}
\renewcommand{\ll}{\lambda}
\def\LL{{\mathbf L}}
\newcommand{\onto}{\twoheadrightarrow}
\newcommand{\oo}{\otimes}
\newcommand{\pd}{\partial}
\newcommand{\Vek}{\mathcal{V}_{\kk}}
\newcommand{\bbs}{\mathbb{S}}
\newcommand{\End}{\operatorname{End}}
\newcommand{\Ext}{\operatorname{Ext}}
\newcommand{\GL}{\operatorname{GL}}
\newcommand{\Hom}{\operatorname{Hom}}
\newcommand{\Mk}{{\operatorname{\bf Mod}_{\kk}}}
\newcommand{\RG}{\operatorname{\bf Rep}\Gamma^d_{\kk}}
\newcommand{\SL}{\operatorname{SL}}
\newcommand{\Sym}{\operatorname{Sym}}
\newcommand{\Tor}{\operatorname{Tor}}
\newcommand{\coker}{\operatorname{coker}}
\renewcommand{\ker}{\operatorname{ker}}
\newcommand{\sgn}{\operatorname{sgn}}
\newcommand{\bb}[1]{\mathbb{#1}}
\newcommand{\mc}[1]{\mathcal{#1}}
\newcommand{\mf}[1]{\mathfrak{#1}}
\newcommand{\ol}[1]{\overline{#1}}
\newcommand{\op}[1]{\operatorname{#1}}
\newcommand{\ul}[1]{\underline{#1}}
\def\PP{{\mathbf P}}
\def\lra{\longrightarrow}
\newcommand{\xra}{\xrightarrow}
\newtheorem{theorem}{Theorem}[section]
\newtheorem*{theorem*}{Theorem}
\newtheorem*{problem*}{Problem}
\newtheorem{lemma}[theorem]{Lemma}
\newtheorem{proposition}[theorem]{Proposition}
\newtheorem{corollary}[theorem]{Corollary}
\newtheorem*{corollary*}{Corollary}
\newtheorem*{main-thm*}{Main Theorem}
\newtheorem*{stable-flag*}{Stable Cohomology Theorem on Flag Varieties}
\newtheorem*{stable-dd*}{Stable Cohomology Calculation for $\ll=(-d,d)$}
\newtheorem*{stable-recursion-hooks*}{Stable Cohomology Recursion for $\ll=(-a-b,a,1^b)$}
\newtheorem*{stable-proj*}{Stable Cohomology Theorem on Projective Space}
\newtheorem*{stable-hooks*}{Stable Cohomology Calculation for Hooks}
\newtheorem*{stable-twocolumn-hooks*}{Duality Theorem for 2-Column Partitions and Hooks}
\newtheorem*{stable-truncated-pows*}{Stable Cohomology Calculation for Truncated Powers}
\newtheorem*{stable-vanishing*}{Polynomial Functors with Vanishing Stable Cohomology}
\newtheorem*{vanishing-Koszul*}{Vanishing Theorem for Finite Length Koszul Modules}
\theoremstyle{definition}
\newtheorem*{definition*}{Definition}
\newtheorem{example}[theorem]{Example}
\theoremstyle{remark}
\newtheorem{remark}[theorem]{Remark}
\newtheorem*{remark*}{Remark}
\numberwithin{equation}{section}
\tikzset{
  treenode/.style = {align=center, inner sep=0pt, text centered,solid,thin,
    font=\sffamily},
  arn_n/.style = {treenode, circle, white, font=\sffamily\bfseries, draw=black,
    fill=black, text width=.5em},
  arn_nl/.style = {treenode, circle, white, font=\sffamily\bfseries, draw=black,
    fill=black, text width=1.5em},  
  arn_r/.style = {treenode, circle, red, draw=red, 
    text width=.5em, very thick},
  arn_v/.style = {treenode, circle, black, font=\sffamily\bfseries, draw=black, text width=1.2em},
  arn_x/.style = {treenode, rectangle, draw=black,
    minimum width=.5em, minimum height=0.5em},
  dott/.style={edge from parent/.style={dotted, very thick,circle,draw}},
  emph/.style={edge from parent/.style={dashed, very thick,circle,draw}},
  norm/.style={edge from parent/.style={solid,thin,circle,draw}}
}
\def\labelbox#1{%
  \hbox{%
    \setbox\z@=\hbox{$\m@th\labelstyle{\,#1\,}$}%
    \setbox\tw@=\hbox{$\m@th\labelstyle\,$}%
    \dimen@=\ht\z@ \advance\dimen@ by \wd\tw@ \ht\z@=\dimen@
    \dimen@=\dp\z@ \advance\dimen@ by \wd\tw@ \dp\z@=\dimen@
    \box\z@
  }%
}
\begin{document}

\title{Stable sheaf cohomology and Koszul--Ringel duality}

\author{Claudiu Raicu}
\address{Department of Mathematics, University of Notre Dame, 255 Hurley, Notre Dame, IN 46556\newline
\indent Institute of Mathematics ``Simion Stoilow'' of the Romanian Academy}
\email{craicu@nd.edu}

\author{Keller VandeBogert}
\address{Department of Mathematics, University of Kentucky, 719 Patterson Office Tower, Lexington, KY 40506}
\email{keller.v@uky.edu}

\subjclass[2020]{Primary 20G05, 20G10, 14M15, 13D02}

\date{\today}

\keywords{Stable cohomology, Koszul duality, Ringel duality, Schur functors, Weyl functors, polynomial functors}

\begin{abstract} 
 We identify a close relationship between stable sheaf cohomology for polynomial functors applied to the cotangent bundle on projective space, and Koszul--Ringel duality on the category of strict polynomial functors as described in the work of Cha\l upnik, Krause, and Touz\'e. Combining this with recent results of Maliakas--Stergiopoulou we confirm a conjectured periodicity statement for stable cohomology. In a different direction, we find a remarkable invariance property for $\Ext$ groups between Schur functors associated to hook partitions, and compute all such extension groups over a field of arbitrary characteristic. We show that this is further equivalent to the calculation of $\Ext$ groups for partitions with $2$ rows (or $2$ columns), and as such it relates to Parker's recursive description of $\Ext$ groups for $\SL_2$-representations. Finally, we give a general sharp bound for the interval of degrees where stable cohomology of a Schur functor can be non-zero.
\end{abstract}

\maketitle

\section{Introduction}\label{sec:intro}

The study of polynomial representations of the general linear group over the complex numbers dates back to Issai Schur’s 1901 thesis. Over the past century, Schur functors have played a central role in many areas of mathematics. Notably, they are linked to sheaf cohomology groups on flag varieties, which (by the Borel--Weil--Bott theorem) admit explicit descriptions in terms of Schur functors. In modern treatments, the theory is developed through the framework of strict polynomial functors or representations of Schur algebras, and it can be formulated over an arbitrary commutative base ring $\kk$. Despite substantial progress, fundamental questions (such as determining extension groups or sheaf cohomology groups) remain largely open outside the characteristic zero field setting. The goal of our paper is to illustrate new connections between the study of polynomial functors and sheaf cohomology, and to showcase several applications in both settings. To that end,
\begin{itemize} 
 \item we identify an interesting relationship between the study of extension groups between polynomial functors, and sheaf cohomology calculations, by explaining how Koszul--Ringel duality on the category of polynomial functors can be realized as a stable sheaf cohomology functor; 
 \item we derive a periodicity statement for stable sheaf cohomology confirming \cite{GRV}*{Conjecture~4.2};
 \item we find a remarkable invariance property for extension groups between Schur (or Weyl) functors associated to hook partitions, which is valid over an arbitrary commutative ring $\kk$;
 \item when $\kk$ is a field, we use results from our earlier work \cite{RV} on stable cohomology to compute such extension groups for hook partitions and $2$-row/column partitions;
 \item we end with a general bound for the projective dimension of Weyl functors, and correspondingly for the vanishing of stable cohomology for Schur functors.
\end{itemize} 

\medskip

\noindent{\bf Koszul duality and stable cohomology.} We work throughout over a commutative ring $\kk$, and we consider the category $\RG$ of strict polynomial functors of degree~$d$ over $\kk$ (following \cites{Krause-duality,touze,chalup-Adv,fri-sus}). When $\kk$ is a field, it is shown in \cite{RV}*{Section~4} that for a polynomial functor $\mc{P}$ of degree~$d$, the sheaf cohomology groups $H^j(\PP^N,\mc{P}(\Omega))$ stabilize when $N\geq d$, where $\Omega$ denotes the cotangent bundle on~$\PP^N$. We revisit the theory in Section~\ref{sec:stcoh-revisit}, and explain how it extends to an arbitrary commutative base ring~$\kk$. We denote $H^j_{st}(\mc{P})=H^j(\PP^N,\mc{P}(\Omega))$ for $N\geq d$, and refer to it as the stable cohomology of $\mc{P}(\Omega)$. One of our goals is to connect stable cohomology to Koszul--Ringel duality on the (derived) category of polynomial functors. To that end we write $\Vek$ for the category of finitely generated projective $\kk$-modules, and define
\begin{equation}\label{eq:def-Hjst-functor}
 \HH^j_{st}:\RG\lra\RG,\quad\HH^j_{st}(\mc{P})(V) = H^j_{st}(\mc{P}(\Omega\oo_{\kk}V))\quad\text{ for }\mc{P}\in\RG,\ V\in\Vek.
\end{equation}
Recall from \cite{Krause-duality} that $\RG$ admits an internal tensor product $-\oo_{\Gamma^d_{\kk}}-$ which is right exact, and let 
\begin{equation}\label{eq:defXi-RG}
\Xi:\RG\lra\RG,\quad\Xi(\mc{P})=\bw^d\oo_{\Gamma^d_{\kk}}\mc{P},
\end{equation}
where $\bw^d$ denotes the exterior power functor. This functor formalizes the construction described by Akin--Buchsbaum \cite[Section 6]{AB2} passing from divided to exterior powers; in characteristic $0$, $\Xi$ specializes to the transpose duality functor \cite{sam-snowden}*{Section~3.3.8}, and as such it categorifies the classical $\omega$-involution \cite[Chapter 2]{macdonald}  on the ring of symmetric functions. In arbitrary characteristic the functor $\Xi$ is only right exact, and our first main result shows that the higher derived functors of $\Xi$ may be described using stable cohomology.

\begin{theorem}\label{thm:stcoh-transpose-duality}
    We have that $\HH^d_{st}=\Xi$, and more generally $\HH^j_{st} = \LL_{d-j}\Xi$ are the left derived functors of $\Xi$. If~$\kk$ is a field $\text{ and }^{\vee}$ denotes the vector space dual, then we have natural isomorphisms
    \[ \Ext_{\RG}^j\left(\mc{P},\bw^d\right) = H^{d-j}_{st}(\mc{P})^{\vee}\quad\text{ for all }j.\]
\end{theorem}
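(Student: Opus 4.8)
The plan is to derive the $\Ext$ formula from the functor identity $\HH^j_{st}=\LL_{d-j}\Xi$ established in the first part of the theorem, together with a formal tensor--Hom duality in $\RG$ that becomes available once $\kk$ is a field. First, substituting $j\mapsto d-j$ gives $\HH^{d-j}_{st}=\LL_j\Xi$; evaluating both sides at the one-dimensional space $\kk$ and using $\Omega\oo_\kk\kk=\Omega$ yields $H^{d-j}_{st}(\mc{P})=(\LL_j\Xi)(\mc{P})(\kk)$. Since $\Xi(-)=\bw^d\oo_{\Gamma^d_\kk}-$ is right exact and $\RG$ has enough projectives, its left derived functors are the $\Tor$-functors for the internal tensor product, so $(\LL_j\Xi)(\mc{P})=\Tor^{\Gamma^d_\kk}_j(\bw^d,\mc{P})$, and the remaining task is the algebraic identity $\Tor^{\Gamma^d_\kk}_j(\bw^d,\mc{P})(\kk)\cong\Ext^j_{\RG}(\mc{P},\bw^d)^{\vee}$.

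The key point is a duality lemma for the monoidal structure on $\RG$ recalled from \cite{Krause-duality}: for $\mc{A}\in\RG$ finitely generated projective and $\mc{B}\in\RG$ arbitrary there is a natural isomorphism $(\mc{A}\oo_{\Gamma^d_\kk}\mc{B})(\kk)\cong\Hom_{\RG}(\mc{A},D\mc{B})^{\vee}$, where $D$ is Kuhn duality, $(D\mc{B})(V)=\mc{B}(V^{\vee})^{\vee}$. By additivity it is enough to check this on the representable projectives $\Gamma^{d,V}=\Gamma^d(\Hom(V,-))$. For these one has $\Gamma^{d,V}\oo_{\Gamma^d_\kk}\mc{B}\cong\mc{B}(\Hom(V,-))$, whose value at $\kk$ is $\mc{B}(V^{\vee})$, while Yoneda gives $\Hom_{\RG}(\Gamma^{d,V},D\mc{B})=(D\mc{B})(V)=\mc{B}(V^{\vee})^{\vee}$; comparing the two over a field yields the isomorphism.

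Granting the lemma, I would take a resolution $\mc{P}_\bullet\to\mc{P}$ by finitely generated projectives --- available since a strict polynomial functor of degree $d$ has finite length --- and compute, using that evaluation at $\kk$ and $\kk$-linear duality are exact,
\[
\Tor^{\Gamma^d_\kk}_j(\mc{P},\mc{B})(\kk)=H_j\!\big((\mc{P}_\bullet\oo_{\Gamma^d_\kk}\mc{B})(\kk)\big)=H_j\!\big(\Hom_{\RG}(\mc{P}_\bullet,D\mc{B})^{\vee}\big)=\Ext^j_{\RG}(\mc{P},D\mc{B})^{\vee}.
\]
Since the internal tensor product is symmetric and sends a pair of projectives to a projective, the functors $\Tor^{\Gamma^d_\kk}_j$ are symmetric in their arguments; and exterior powers are Kuhn self-dual, $D\bw^d=\bw^d$. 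Taking $\mc{B}=\bw^d$ thus gives $H^{d-j}_{st}(\mc{P})=\Tor^{\Gamma^d_\kk}_j(\bw^d,\mc{P})(\kk)=\Ext^j_{\RG}(\mc{P},\bw^d)^{\vee}$, and dualizing once more (all spaces in sight being finite dimensional over $\kk$) gives the asserted $\Ext^j_{\RG}(\mc{P},\bw^d)=H^{d-j}_{st}(\mc{P})^{\vee}$.

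All the conceptual content is already carried by $\HH^j_{st}=\LL_{d-j}\Xi$, so what remains is formal. The one place that deserves care is the duality lemma: getting the exact shape of the tensor--Hom/Kuhn-duality isomorphism right and invoking the correct basic properties of the monoidal structure on $\RG$ --- that it is symmetric, that its unit is $\Gamma^d$, and how it behaves on representables --- all of which should be read off from \cite{Krause-duality}. Beyond that the only thing to watch is bookkeeping: tracking the index shift $j\leftrightarrow d-j$ and the variances consistently, and noting that the hypothesis ``$\kk$ is a field'' enters only through the exactness and biduality of $(-)^{\vee}$ and the finiteness of the relevant $\Hom$ and $\Ext$ groups.
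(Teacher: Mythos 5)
There is a genuine gap: your argument never proves the main assertion of the theorem. You open by saying you will ``derive the $\Ext$ formula from the functor identity $\HH^j_{st}=\LL_{d-j}\Xi$ established in the first part of the theorem,'' but that identity \emph{is} the first part of the statement you were asked to prove, and it is where all of the substance lies. It is not formal: one must show that the sheaf-cohomology functor $\HH^d_{st}$ agrees with the algebraically defined $\Xi=\bw^d\oo_{\Gamma^d_{\kk}}-$ both on objects and on morphisms. In the paper this is the content of Proposition~\ref{prop:stcoh-transpose-Dcomposition} (using the Cauchy filtration and the vanishing of $H^*(\PP,\bb{W}_{\mu}\Omega)$ for $\mu_1\geq 2$ from Lemma~\ref{lem:Weyl-Omega} and Corollary~\ref{cor:kunneth-divided} to get $\HH^j_{st}(D^{\ul{d}})=0$ for $j\neq d$ and $\HH^d_{st}(D^{\ul{d}})=\bw^{\ul{d}}$) and of Proposition~\ref{prop:stcoh-transpose-maps} (using Lemma~\ref{lem:basic-co-mult-stable} and the sign of the $\mf{S}_d$-action on $H^d(\PP,\Omega^{\oo d})$ to show that $\HH^d_{st}(\phi)=\Xi(\phi)$ for any $\phi:D^{\ul{a}}\to D^{\ul{b}}$, via the $(\nabla,\Delta)$ characterization of $\Xi$ from \cite{Krause-book}); one then applies both to a resolution of $\mc{P}$ by sums of $D^{\ul{d}}$ and the hypercohomology spectral sequence. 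None of this geometric input appears in your proposal, so the theorem is not proved.

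By contrast, the part you do treat --- deducing $\Ext^j_{\RG}(\mc{P},\bw^d)=H^{d-j}_{st}(\mc{P})^{\vee}$ over a field from $\HH^{d-j}_{st}=\LL_j\Xi$ --- is essentially correct and is close in spirit to what the paper does (it invokes the description of $\LL_j\Xi(\mc{P})(\kk)$ as $\Ext^j(\mc{P},\bw^d)^{\vee}$ from \cite{chalup-Adv}; your tensor--Hom/Kuhn-duality lemma for representables, $\Gamma^{d,V}\oo_{\Gamma^d_{\kk}}\mc{B}\cong\mc{B}(\Hom(V,-))$ together with Yoneda, is a reasonable way to reprove that fact, and alternatively one can read it off directly from the complex $\Xi(\mc{F}_{\bullet})(\kk)$ and the identification $\Hom(D^{\ul{d}},\bw^d)=(\bw^d)_{\ul{d}}$). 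Two small cautions there: your appeal to ``finite length'' and the final double dualization require $\mc{P}$ to have finite-dimensional values (or the relevant groups to be finite dimensional), whereas the stated direction $\Ext^j(\mc{P},\bw^d)=H^{d-j}_{st}(\mc{P})^{\vee}$ is the one that holds without such finiteness; and when you pass from $\Tor_j(\mc{P},\bw^d)$ to $\Tor_j(\bw^d,\mc{P})$ you should say explicitly that the symmetry isomorphism of the internal tensor product is applied termwise to the chosen resolution. But these are minor; the missing proof of $\HH^j_{st}=\LL_{d-j}\Xi$ is the real defect.
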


\begin{remark}\label{rem:Koszul duality}
We refer the reader to \cites{Krause-duality,touze,chalup-Adv} and Section~\ref{subsec:poly-fun} for more background, and summarize here some facts and notation. Krause proves that $\Xi$ induces a duality at the level of the derived category, and relates it to Ringel duality for the Schur algebra. At the level of the derived category $\Xi$ induces a quasi-inverse to the Koszul duality functor $\Theta$ from \cite{chalup-Adv} (where $\kk$ is a field), also referred to as Ringel duality in \cite{touze} (where $\kk$ is a PID). One has that $\Theta = {\bf R}\mathscr{H}om(\bw^d,-)$, where $\mathscr{H}om$ denotes the internal $\Hom$ functor in $\RG$ (denoted by $\bb{H}$ in \cite{touze}). In \cite{chalup-Adv}*{Definition~2.3}, the functor $\Xi$ is denoted by $\Theta_p$. The functor $\Xi$ also appears in \cite{Krause-book}*{Chapter 8} where it is denoted by $\Omega$.
\end{remark}

If we write $\bb{S}_{\ll}$ for the Schur functor associated to a partition $\ll$, then it follows from Theorem~\ref{thm:stcoh-transpose-duality} and \cite{RV}*{Theorem~6.12} that the following non-canonical isomorphisms hold when $\kk$ is a field:
\begin{equation}\label{eq:strange-duality}
 \Ext^j_{\operatorname{\bf Rep}\Gamma^{m+n}_{\kk}}\left(\bb{S}_{(2^n,1^{m-n})},\bw^{m+n}\right) \simeq \Ext^{n-j}_{\operatorname{\bf Rep}\Gamma^{m+1}_{\kk}}\left(\bb{S}_{(n+1,1^{m-n})},\bw^{m+1}\right)\quad\text{for all $m\geq n$ and all $j$}.
\end{equation}
Over a general ring $\kk$ however, there is no direct connection between the groups in \eqref{eq:strange-duality}. One can show using \cite{FRRZY}*{Theorem~1.1} (see also \cite{GRV}*{Conjecture~2.3}) that the complexes ${\bf R}\Hom\left(\bb{S}_{(2^n,1^{m-n})},\bw^{m+n}\right)$ and ${\bf R}\Hom\left(\bb{S}_{(n+1,1^{m-n})},\bw^{m+1}\right)$ are derived dual up to a homological shift, hence the groups in \eqref{eq:strange-duality} are in fact dual vector spaces (see Example~\ref{ex:W22-Ext}). In the case $\kk=\bb{Z}$ we get instead
\[ 
\Ext^j_{\operatorname{\bf Rep}\Gamma^{m+n}_{\bb{Z}}}\left(\bb{S}_{(2^n,1^{m-n})},\bw^{m+n}\right) = \Ext^{n+1-j}_{\operatorname{\bf Rep}\Gamma^{m+1}_{\bb{Z}}}\left(\bb{S}_{(n+1,1^{m-n})},\bw^{m+1}\right)^{*}\quad\text{for all $m\geq n$ and all $j$},
\]
where for a finite abelian group $M$, we let $M^* = \Hom_{\bb{Z}}(M,\bb{Q}/\bb{Z})=\Ext^1_{\bb{Z}}(M,\bb{Z})$ denote its Pontryagin dual. This is abstractly isomorphic to $M$, so we can get again a non-canonical identification as in \eqref{eq:strange-duality}, but with a different cohomological shift (see also \cite{FRRZY}*{Theorem~1.2(iv)}).

\begin{remark}\label{rem:res-wedge-versus-Schur}
 It is a consequence of Kuhn duality that $\Ext^j(\bb{S}_{\ll},\bw^d) = \Ext^j(\bw^d,\bb{W}_{\ll})$ (where here and henceforth $\Ext$ is computed in the category $\RG$, which we suppress from notation when the context is clear). In Section~\ref{subsec:spec-Ext} we describe some specialization complexes that compute $\Ext(\bw^d,\mc{P})$ for a general polynomial functor $\mc{P}$. However, it seems difficult to understand \eqref{eq:strange-duality} through these complexes. Instead, the proof of \eqref{eq:strange-duality} in \cite{RV} is based on explicit resolutions of Schur functors by tensor products of exterior powers following \cite{AB1}. We will return to discuss such resolutions below.
\end{remark}

One can use Theorem~\ref{thm:stcoh-transpose-duality} in the opposite direction, starting with established results for $\Ext$ groups and deriving consequences to stable cohomology. Building on the main results of \cite{periodicity-mal-ste}, we deduce the following periodicity result for stable cohomology \cite{GRV}*{Conjecture~4.2}.

\begin{theorem}\label{thm:periodicity}
  Consider a partition $\mu=(\mu_1,\cdots,\mu_{\ell})$ with $\mu_{\ell}>0$, and write 
 \[\mu[q] = (\mu,1^q) = (\mu_1,\cdots,\mu_{\ell},1,\cdots,1)\] 
 for the partition obtained from $\mu$ by appending $q$ parts of size $1$. If $\kk$ is a field of characteristic $p>0$ and $q=p^r>|\mu|-{\ell}$ then 
 \[H^j_{st}(\bb{S}_{\mu}) = H^{j+q}_{st}(\bb{S}_{\mu[q]}).\]
 Consequently, if we write $H^j_{st}(\ll)$ for the stable cohomology groups associated to a weight $\ll$ on the full flag variety as in \cite{RV}*{Stable Cohomology Theorem on Flag Varieties}, then
 \[H^j_{st}(-|\mu|,\mu) = H^{j+q}_{st}(-|\mu|-q,\mu[q]).\]
\end{theorem}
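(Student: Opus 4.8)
The plan is to use Theorem~\ref{thm:stcoh-transpose-duality} to convert the assertion into a periodicity statement for $\Ext$ groups, recognize the latter as a reformulation of the main theorem of \cite{periodicity-mal-ste}, and then transport the conclusion to the flag variety via the dictionary of \cite{RV}. Since $\kk$ is a field and the $\Ext$ groups in $\RG$ are finite dimensional, Theorem~\ref{thm:stcoh-transpose-duality} yields natural isomorphisms $H^i_{st}(\bb{S}_\nu)\cong\Ext^{|\nu|-i}(\bb{S}_\nu,\bw^{|\nu|})^{\vee}$ for every partition $\nu$. Applying this with $\nu=\mu$ and with $\nu=\mu[q]$ (where $|\mu[q]|=|\mu|+q$), the claimed equality $H^j_{st}(\bb{S}_\mu)=H^{j+q}_{st}(\bb{S}_{\mu[q]})$ becomes, after the substitution $i=|\mu|-j$, the assertion
\[
\Ext^{i}\!\left(\bb{S}_\mu,\bw^{|\mu|}\right)\;\cong\;\Ext^{i}\!\left(\bb{S}_{\mu[q]},\bw^{|\mu|+q}\right)\qquad\text{for all }i,
\]
with no net cohomological shift: the shift by $q$ is exactly compensated by the increase of $q$ in the degree of the functors. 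If it is convenient to line up with the conventions of \cite{periodicity-mal-ste}, one may further rewrite both sides via Kuhn duality (Remark~\ref{rem:res-wedge-versus-Schur}) as $\Ext^i(\bw^{|\mu|},\bb{W}_\mu)\cong\Ext^i(\bw^{|\mu|+q},\bb{W}_{\mu[q]})$, i.e.\ as extension groups between Weyl functors, since $\bw^d=\bb{W}_{(1^d)}$.

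Next I would invoke the periodicity theorem of Maliakas--Stergiopoulou. The passage from $\mu$ to $\mu[q]=(\mu,1^q)$ adjoins $q$ single-box rows, i.e.\ lengthens the first column of the Young diagram of $\mu$ by $q$, and the quantity $|\mu|-\ell=\sum_i(\mu_i-1)$ occurring in the hypothesis is precisely the number of boxes of $\mu$ lying outside the first column. The content of \cite{periodicity-mal-ste} is that once the adjoined part has $p$-power length $q=p^r$ strictly larger than this number, the natural comparison map between the two sides of the displayed isomorphism is bijective in every homological degree, which is exactly what is needed. The step that genuinely requires care here is this matching: one has to reconcile the partition-versus-conjugate and Schur-versus-Weyl conventions, the indexing conventions for $\Ext$, and the precise numerical threshold used in \cite{periodicity-mal-ste} with the formulation above. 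If that reference establishes the isomorphism only after passing to a colimit in $r$, or only within a restricted range of degrees, one closes the remaining gap using the bounds for nonvanishing of stable cohomology of Schur functors obtained later in the paper.

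Finally, the flag variety statement follows formally. By the Stable Cohomology Theorem on Flag Varieties of \cite{RV}, the stable cohomology $H^j_{st}(-|\mu|,\mu)$ of the balanced weight whose positive part is $\mu$ is computed by $H^j_{st}(\bb{S}_\mu)$, and likewise $H^j_{st}(-|\mu|-q,\mu[q])$ is computed by $H^j_{st}(\bb{S}_{\mu[q]})$ --- here we use that the positive part of the weight $(-|\mu|-q,\mu[q])$ is $\mu[q]$ and that $|\mu[q]|=|\mu|+q$. Substituting these two identifications into the relation $H^j_{st}(\bb{S}_\mu)=H^{j+q}_{st}(\bb{S}_{\mu[q]})$ just established gives $H^j_{st}(-|\mu|,\mu)=H^{j+q}_{st}(-|\mu|-q,\mu[q])$, as claimed. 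In summary, every step apart from the translation to, and identification with, the theorem of \cite{periodicity-mal-ste} is a formal consequence of dictionaries already set up; I expect the careful matching of that reference's hypotheses and conventions to be the main obstacle.
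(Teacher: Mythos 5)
Your proposal follows essentially the same route as the paper: Theorem~\ref{thm:stcoh-transpose-duality} converts stable cohomology into $\Ext^{d-j}\left(\bb{S}_{\mu},\bw^{d}\right)^{\vee}$, the shift by $q$ is absorbed by the change in functor degree, the Maliakas--Stergiopoulou periodicity theorem supplies the resulting shift-free $\Ext$ isomorphism, and the flag variety statement is the dictionary from \cite{RV}. The one step you explicitly leave open --- matching conventions with \cite{periodicity-mal-ste} --- is exactly where the paper inserts a citation: by \cite{chalup-Adv}*{Corollary~2.5} one has $\Ext^{d-j}\left(\bb{S}_{\mu},\bw^{d}\right)\cong\Ext^{d-j}\left(\bb{W}_{\mu},D^{d}\right)$, which is precisely the form in which \cite{periodicity-mal-ste}*{Theorem~1.1} is stated (same cohomological degree, $\mu\mapsto\mu[q]$, $D^{d}\mapsto D^{d+q}$, threshold $q=p^r>|\mu|-\ell$), so no colimit or degree-range patching of the kind you anticipate is needed. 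Be aware that your fallback rewriting via Kuhn duality lands on $\Ext^{i}\left(\bw^{d},\bb{W}_{\mu}\right)$, which by \eqref{eq:ext-weyl-schur=eqs} equals $\Ext^{i}\left(\bb{W}_{\mu'},D^{d}\right)$: a conjugate appears, and since appending $1^q$ does not commute with conjugation, the alignment with the reference genuinely goes through Cha\l upnik's Koszul--Ringel duality statement rather than Kuhn duality alone --- this is the only substantive bookkeeping your sketch omits, and it is a one-line citation rather than a gap in the strategy.
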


\medskip

\noindent{\bf Invariance of Ext groups for hook partitions.} Consider hook partitions $\ll=(A,1^B)$ and $\mu=(a,1^b)$ of the same size $d$, and assume further that we fix the difference $\Delta=A-a\geq 0$. One can start with $\ll=(d)$, $\mu=(d-\Delta,1^{\Delta})$, and slide both hooks to the left to obtain all such $\ll$, $\mu$ (below is the case $d=6$ and $\Delta=2$):

\newcommand\Tstrut{\rule{0pt}{10ex}}         
\newcommand\Bstrut{\rule[-0.9ex]{0pt}{0pt}}   
\[
\Yvcentermath1
\begin{array}{c|c|c|c|c}
 \raisebox{2.3ex}{$\ll=(A,1^B)$} & \raisebox{1.8ex}{\yng(6)} & \raisebox{1.2ex}{\yng(5,1)} & \raisebox{0.6ex}{\yng(4,1,1)} & \raisebox{0.3ex}{\yng(3,1,1,1)} \\
\hline
 \mu=(a,1^b) & \yng(4,1,1) &  \yng(3,1,1,1) &  \yng(2,1,1,1,1) & \yng(1,1,1,1,1,1)\Tstrut\\
\end{array}
\]
We prove that over an arbitrary commutative ring $\kk$, the groups $\Ext^j\left(\bb{S}_{\ll},\bb{S}_{\mu}\right)$ only depend on $d$ and $\Delta$, so they are invariant under sliding the hooks simultaneously.

\begin{theorem}\label{thm:introExtComputation}
    Over any commutative ring $\kk$ there exist isomorphisms
    $$\Ext^j \left(\bb{S}_{(A,1^B)}, \bb{S}_{(a,1^b)} \right) = \Ext^j \left(\bb{S}_{(A-1 , 1^{B+1})}, \bb{S}_{(a-1,1^{b+1})} \right).$$
    In particular, if $\kk$ is a field and if we let $d=a+b$, $\Delta=A-a$ as before, then
    $$\Ext^j \left( \bb{S}_{(A,1^B)}, \bb{S}_{(a,1^b)} \right) = H^{d-j}_{st} \left(\bbs_{(\Delta+1,1^{d-\Delta-1})} \right)^{\vee}.$$
\end{theorem}

Notice that we have an involution $(\ll,\mu)\lra(\mu',\ll')$ on the pairs of hooks with fixed $\Delta=A-a$, where $\ll'$ denotes the conjugate partition to $\ll$, and the general identification \eqref{eq:ext-weyl-schur=eqs} below shows that $\Ext$ is invariant under this involution. Our result shows that \eqref{eq:ext-weyl-schur=eqs} is a special case of a more general invariance phenomenon in the case of hooks. Extension groups between Weyl modules for hooks have been studied in a series of papers \cites{mal-resolutions,mal-ste-hookWeyl,ste-ext2}. The explicit results for particular Ext groups in these papers illustrate special instances of the invariance property in Theorem~\ref{thm:introExtComputation}. As we explain next, when $\kk$ is a field we can use our prior work \cite{RV} to give explicit formulas for all the groups in Theorem~\ref{thm:introExtComputation}.

\medskip

\noindent{\bf Some explicit formulas for $\Ext$ groups over a field.} We next focus on the case when $\kk$ is a field of characteristic $p>0$. It is a fundamental open problem to understand $\Ext$ groups between Schur (or Weyl) modules, and the existence of non-zero morphisms is already highly non-trivial, and goes back to classical work of Andersen, Carter, Donkin, Lusztig, Payne, and many others \cites{andersen-weyl,carter-lusztig,carter-payne,donkin-ratl}. Using Kuhn duality and Koszul--Ringel duality, one has the following identifications
\begin{equation}\label{eq:ext-weyl-schur=eqs}
 \Ext^j\left(\bb{S}_{\ll},\bb{S}_{\mu}\right) = \Ext^j\left(\bb{S}_{\mu'},\bb{S}_{\ll'}\right) = \Ext^j\left(\bb{W}_{\mu},\bb{W}_{\ll}\right) = \Ext^j\left(\bb{W}_{\ll'},\bb{W}_{\mu'}\right).
\end{equation}
Moreover, the groups in \eqref{eq:ext-weyl-schur=eqs} vanish identically unless $\mu\leq\ll$ in the dominance order \cite{CPSvK}. There are useful reduction results for $\Ext$ groups, such as the row/column removal theorem (see \cite{donkin-tilting}*{Section~10} for a general statement), as well as vanishing results that come from understanding the block structure for the category of polynomial representations \cite{donkin-blocks}. Nevertheless, explicit formulas for the groups \eqref{eq:ext-weyl-schur=eqs} remain elusive in general. The most detailed understanding to date occurs in the setting of exponential functors (symmetric, exterior, divided powers), as explained in \cites{chalup-Adv,touze-bar-complexes}. 

Our goal is to study a few classes of non-exponential functors, and provide explicit formulas when  $\ll,\mu$ are (simultaneously) hook partitions, or $2$-row partitions (or equivalently by \eqref{eq:ext-weyl-schur=eqs}, $2$-column partitions). Remarkably, the $\Ext$ groups are all controlled in these cases by the stable cohomology calculations for hooks from \cite{RV}*{Section~6}. As in \cite{RV}*{(1.7)} we define the power series
\begin{equation}\label{eq:def-Atu}
 \mc{A}(t,u) = \prod_{i\geq 1}\frac{1+t\cdot u^{p^i}}{1-t^2\cdot u^{p^i}}.
\end{equation}
For $k\geq 0$ we consider the $p$-adic expansion
\[ k = \sum_{j\geq 0} k_j\cdot p^j = k_0 + k_1\cdot p + k_2\cdot p^2 + \cdots, \text{ where }0\leq k_j < p\text{ and }k_j=0\text{ for }j\gg 0,\]
and we define
\begin{equation}\label{eq:def-kbar-i}
 \ol{k}(i) = (p^{i+1}-1) - (k_0+k_1\cdot p + \cdots + k_i\cdot p^i) = \sum_{j=0}^i (p-1-k_j)\cdot p^j,
\end{equation}
making the convention that $\ol{k}(-1)=0$. Finally, we consider the power series
\begin{equation}\label{eq:E-pow-ser}
 \mc{E}_k(t,u) = \sum_{\substack{i\geq 0 \\ k_i\neq p-1}} \left(u^{\ol{k}(i-1)}+t\cdot u^{\ol{k}(i)}\right) \cdot \mc{A}(t,u^{p^i}).
\end{equation}

Theorem \ref{thm:introExtComputation} combined with the results of \cite{RV} imply that the coefficients of $\mc{E}_k(t,u)$ completely determine extensions between Schur/Weyl functors associated with hooks and 2-row/column partitions over \emph{any} field $\kk$.

\begin{theorem}\label{thm:dimExt-hook-2row}
 Let $e^j(\ll,\mu)$ denote the dimension of $\Ext^j(\bb{S}_{\ll},\bb{S}_{\mu})$ as a $\kk$-vector space. If $a+b=A+B$ then
 \begin{enumerate}
  \item For $\ll=(A,B)$ and $\mu=(a,b)$, $e^j(\ll,\mu)$ equals the coefficient of $t^j\cdot u^{A-a}$ in $\mc{E}_{a-b}(t,u)$.
  \item For $\ll=(a,b)'=(2^b,1^{a-b})$ and $\mu=(A,B)'=(2^B,1^{A-B})$, $e^j(\ll,\mu)$ equals, as in (1), the coefficient of $t^j\cdot u^{A-a}$ in $\mc{E}_{a-b}(t,u)$.
  \item For $\ll=(A,1^B)$ and $\mu=(a,1^b)$, $e^j(\ll,\mu)$ equals the coefficient of $t^{A-a-j}\cdot u^{A-a}$ in $\mc{E}_{a+B-1}(t,u)$.
 \end{enumerate}
\end{theorem}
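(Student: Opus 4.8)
In each of the three cases the goal is to rewrite $\dim_{\kk}\Ext^{j}(\bbs_{\ll},\bbs_{\mu})$ as a stable cohomology dimension of a single \emph{hook} Schur functor and then apply the Stable Cohomology Calculation for Hooks of \cite{RV}*{Section~6}, which I will use in the form that, for the hook with arm $\sigma$ and leg $\ell$,
\[ \dim_{\kk}H^{c}_{st}\bigl(\bbs_{(\sigma+1,1^{\ell})}\bigr) \;=\; \bigl[\,t^{\,c-\ell-1}u^{\sigma}\,\bigr]\,\mc{E}_{\ell}(t,u). \]
Part (3) is then essentially a substitution: Theorem~\ref{thm:introExtComputation} identifies $\Ext^{j}(\bbs_{(A,1^{B})},\bbs_{(a,1^{b})})$ with $H^{d-j}_{st}(\bbs_{(\Delta+1,1^{d-\Delta-1})})^{\vee}$, where $d=a+b=A+B$ and $\Delta=A-a$; the corner hook has arm $\Delta=A-a$ and, using $A=a+b-B$, leg $d-\Delta-1=a+B-1$, so plugging $\sigma=A-a$, $\ell=a+B-1$, $c=d-j$ into the displayed identity gives $e^{j}$ as the coefficient of $t^{\,(d-j)-(a+B-1)-1}u^{A-a}=t^{\,A-a-j}u^{A-a}$ in $\mc{E}_{a+B-1}$, which is (3). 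Apart from the index arithmetic there is nothing to do.

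\textbf{The 2-row and 2-column cases.} By the first isomorphism of \eqref{eq:ext-weyl-schur=eqs} and conjugation, $\Ext^{j}(\bbs_{(A,B)},\bbs_{(a,b)}) = \Ext^{j}(\bbs_{(2^{b},1^{a-b})},\bbs_{(2^{B},1^{A-B})})$, so parts (1) and (2) are the same assertion and it suffices to prove (1). The plan is to strip the $B$ columns shared by $(A,B)$ and $(a,b)$ — legitimate since $b\ge B$ by dominance — using the row/column removal theorem \cite{donkin-tilting}*{Section~10}, which replaces the pair by $(A-B)$ and $(a-B,b-B)$; conjugating once more via \eqref{eq:ext-weyl-schur=eqs} then yields
\[ \Ext^{j}\bigl(\bbs_{(A,B)},\bbs_{(a,b)}\bigr) \;=\; \Ext^{j}\bigl(\bbs_{(2^{b-B},1^{a-b})},\bw^{A-B}\bigr), \]
the degrees agreeing because $2(b-B)+(a-b)=A-B$. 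Now Theorem~\ref{thm:stcoh-transpose-duality} converts the right-hand side into $H^{(A-B)-j}_{st}(\bbs_{(2^{b-B},1^{a-b})})^{\vee}$, and I would then invoke the Duality Theorem for 2-Column Partitions and Hooks — which I expect to identify $H^{\bullet}_{st}(\bbs_{(2^{s},1^{t})})$ with $H^{\bullet}_{st}(\bbs_{(s+1,1^{t})})$ up to a reflection of the cohomological index — to replace the stable cohomology of this two-column functor by that of the hook with arm $A-a$ (note $b-B=A-a$) and leg $a-b$. Feeding $\sigma=A-a$, $\ell=a-b$ into the displayed hook identity then produces the coefficient of $t^{j}u^{A-a}$ in $\mc{E}_{a-b}$, which is (1); one can also read this off from Parker's recursive description of $\Ext$ between Weyl modules for $\SL_{2}$ mentioned in the introduction, which provides an independent check.

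\textbf{Expected main obstacle.} The one genuinely substantive input is the Duality Theorem for 2-Column Partitions and Hooks: note it forces a change of polynomial degree (from $2s+t$ to $s+t+1$), so it is not formal, and I would expect to prove it by means of explicit exterior-power resolutions of two-column Schur functors in the spirit of \cite{AB1}, or through the Koszul--Ringel duality functor $\Xi$ together with the Stable Cohomology Recursion for hooks. Once that duality is in place, the remaining difficulty is bookkeeping: one must carry the arm, leg, and cohomological indices through the column-removal reduction and then reconcile them with the $p$-adic quantities $\ol{k}(i)$ appearing in $\mc{E}_{k}(t,u)$. Finally, one should confirm that the row/column removal theorem is available in the category $\RG$ of strict polynomial functors over the given field and is compatible with the Kuhn and conjugation isomorphisms of \eqref{eq:ext-weyl-schur=eqs}; in characteristic zero the positive-degree $\Ext$ groups vanish and the statement is immediate.
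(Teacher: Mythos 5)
Your proposal is correct and takes essentially the same route as the paper: column removal, \eqref{eq:ext-weyl-schur=eqs}, the two-column/hook duality \eqref{eq:strange-duality} (which is exactly \cite{RV}*{Theorem~6.12} combined with Theorem~\ref{thm:stcoh-transpose-duality}, so it needs no new proof), and Theorem~\ref{thm:introExtComputation} reduce all three parts to stable cohomology of hooks, just as the paper does via Lemma~\ref{lem:Hab-vs-Emn}, and your index arithmetic checks out. The only input you assume beyond what \cite{RV} literally provides is the closed-form dictionary $\dim_{\kk}H^{c}_{st}\bigl(\bb{S}_{(\sigma+1,1^{\ell})}\bigr)=$ coefficient of $t^{c-\ell-1}u^{\sigma}$ in $\mc{E}_{\ell}(t,u)$ with $\mc{E}_{\ell}$ given by \eqref{eq:E-pow-ser}: \cite{RV} supplies the recursions (transcribed here as \eqref{eq:rec-Ektu}), and checking that the explicit series \eqref{eq:E-pow-ser} solves them is the short computation the paper carries out in Section~\ref{sec:explicit-Ext} --- this is the concrete content of the ``bookkeeping'' you defer.
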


We record an example illustrating Theorem \ref{thm:dimExt-hook-2row} below.
\begin{example}\label{ex:E3-tu}
 Suppose that $p=2$ and consider the series $\mc{E}_k(t,u)$ for $k=3$. We have $k_0=k_1=1=p-1$ and $k_i=0$ for $i\geq 2$. It follows that $\ol{k}(i)=2^{i+1}-4$ for $i\geq 1$, hence
 \[ 
 \begin{aligned}
 \mc{E}_3(t,u) &= \sum_{i\geq 2} \left(u^{2^i-4}+t\cdot u^{2^{i+1}-4}\right) \cdot \mc{A}(t,u^{2^i}) = (1+tu^4)\cdot\mc{A}(t,u^4)+(u^4+tu^{12})\cdot\mc{A}(t,u^8)+\cdots \\
 & = 1+u^4\cdot(1+t) + u^8\cdot(t+t^2) + u^{12}\cdot(1+t+t^2+t^3)+\cdots
 \end{aligned}
 \]
 It follows that the first non-zero $\Ext$ groups for $\ll\neq\mu$ occur when $A-a=4$, and we have using Theorem~\ref{thm:dimExt-hook-2row}(1)
 \[ \Ext^0\left(\bb{S}_{(11,0)},\bb{S}_{(7,4)}\right) = \Ext^1\left(\bb{S}_{(11,0)},\bb{S}_{(7,4)}\right) = \kk.\]
 If instead we use Theorem~\ref{thm:dimExt-hook-2row}(3) with $(A,B)=(5,3)$ and $(a,b)=(1,7)$ then we get
 \[\Ext^3\left(\bb{S}_{(5,1^3)},\bw^8\right) = \Ext^4\left(\bb{S}_{(5,1^3)},\bw^8\right) = \kk.\]
\end{example}

The proof of Theorem~\ref{thm:dimExt-hook-2row} is not difficult given the prior results and is presented in Section~\ref{sec:explicit-Ext}. The point is that the power series \eqref{eq:E-pow-ser} are closely related to the ones in \cite{RV}*{(1.6)} that describe stable cohomology for all hooks. If $\ll$ and $\mu$ are hook partitions, then Theorem~\ref{thm:introExtComputation} reduces the calculation of $\Ext$ groups to a stable cohomology computation. If instead $\ll,\mu$ are $2$-column partitions, then the row-removal theorem allows us to reduce to the case when $\mu$ has a single column, which is then handled using \eqref{eq:strange-duality}. 

An interesting consequence arises for $2$-row partitions, where the extension groups can be computed in the category of polynomial $\GL_2$-representations. The blocks of the category of degree $d$ polynomial representations of $\GL_n$ are described in \cite{donkin-blocks}, and taking $n\geq d$ they are equivalent to the blocks of the category of polynomial functors of degree $d$. For weights $\ll$, $\mu$ in different blocks one gets vanishing of the extension groups \eqref{eq:ext-weyl-schur=eqs}, but the converse is usually false. A consequence of Theorem~\ref{thm:dimExt-hook-2row} and \cite{RV}*{(1.10)} is that the converse does hold in the category of $\GL_2$-representations.
\begin{corollary}\label{cor:Ext-vs-block}
 Suppose that $\ll=(A,B)$ and $\mu=(a,b)$ are partitions of the same size with $A\geq a$. The weights $\ll,\mu$ belong to the same block of the category of polynomial $\GL_2$-representations if and only if there exists an index $j$ such that $\Ext^j(\bb{S}_{\ll},\bb{S}_{\mu})\neq 0$.
\end{corollary}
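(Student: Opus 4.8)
The proof will reduce the equivalence to a combinatorial identity and match that identity against two descriptions. By Theorem~\ref{thm:dimExt-hook-2row}(1) and the non-negativity of $e^j(\ll,\mu)=\dim_{\kk}\Ext^j(\bb{S}_\ll,\bb{S}_\mu)$, the sum $\sum_j e^j(\ll,\mu)$ equals the coefficient of $u^{A-a}$ in $\mc{E}_{a-b}(1,u)$, so there is an index $j$ with $\Ext^j(\bb{S}_\ll,\bb{S}_\mu)\neq 0$ precisely when $A-a$ lies in the support of the power series $\mc{E}_{a-b}(1,u)$. Hence the corollary is equivalent to the assertion that, for $A\geq a$, the weights $(A,B)$ and $(a,b)$ lie in the same block of polynomial $\GL_2$-representations if and only if $A-a\in\operatorname{supp}\mc{E}_{a-b}(1,u)$.

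Next I would compute this support. Setting $t=1$ in \eqref{eq:def-Atu} gives $\mc{A}(1,u)=\prod_{i\geq1}\tfrac{1+u^{p^i}}{1-u^{p^i}}$, which has non-negative coefficients and support exactly $p\cdot\mathbb{Z}_{\geq 0}$; hence $\mc{A}(1,u^{p^i})$ has positive coefficients supported on $p^{i+1}\cdot\mathbb{Z}_{\geq 0}$. By \eqref{eq:E-pow-ser}, $\mc{E}_k(1,u)$ is a sum of monomial shifts of such series with no cancellation, so
\[\operatorname{supp}\mc{E}_k(1,u)=\bigcup_{\substack{i\geq 0\\ k_i\neq p-1}}\Big(\big(\ol{k}(i-1)+p^{i+1}\mathbb{Z}_{\geq0}\big)\cup\big(\ol{k}(i)+p^{i+1}\mathbb{Z}_{\geq0}\big)\Big).\]
Set $i_0=\min\{i:k_i\neq p-1\}$, so $i_0=v_p(k+1)$, $\ol{k}(i_0-1)=0$ and $\ol{k}(i_0)=(p-1-k_{i_0})\,p^{i_0}$. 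Since $\ol{k}$ is non-decreasing and $\ol{k}(i-1)\equiv\ol{k}(i)\equiv\ol{k}(i_0)\pmod{p^{i_0+1}}$ for all $i>i_0$, every term with $i>i_0$ is absorbed into the $i=i_0$ term, whence
\[\operatorname{supp}\mc{E}_k(1,u)=p^{i_0+1}\mathbb{Z}_{\geq 0}\;\cup\;\big(\ol{k}(i_0)+p^{i_0+1}\mathbb{Z}_{\geq0}\big),\]
a union of arithmetic progressions depending only on $i_0$ and the digit $k_{i_0}$ of $k$.

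It remains to identify this set with $\GL_2$-linkage. Passing to the $\SL_2$-weights $A-B$ and $k=a-b$ and using $A-a=\tfrac12(A-B-k)$, the condition $A-a\in\operatorname{supp}\mc{E}_k(1,u)$ becomes an explicit congruence (or pair of congruences) on $A-B$ modulo a power of $p$. On the other hand, Donkin's description \cite{donkin-blocks} of the blocks of the Schur algebra $S(2,d)$ gives a $p$-adic criterion for two degree-$d$ polynomial weights to be linked; the statement that this criterion coincides with the congruences above is exactly \cite{RV}*{(1.10)}, and together with the two previous steps it yields the corollary.

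I expect the matching in the last paragraph to be the main obstacle: one must make Donkin's block rule for $S(2,d)$ fully explicit, understand how truncation to weights in $[0,d]$ refines the affine-Weyl-group linkage classes of $\SL_2$ (splitting them into several $\GL_2$-blocks), and verify that the surviving block is governed precisely by $i_0=v_p(a-b+1)$ and the first base-$p$ digit of $a-b$ that differs from $p-1$. This is careful but elementary base-$p$ bookkeeping, as is the absorption argument in the support computation.
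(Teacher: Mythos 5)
Your reduction and your support computation are correct: with $i_0=v_p(a-b+1)$ and $q=p^{i_0}$, the support of $\mc{E}_{a-b}(1,u)$ is $pq\,\mathbb{Z}_{\geq 0}\cup\bigl(\ol{k}(i_0)+pq\,\mathbb{Z}_{\geq 0}\bigr)$, and since $\ol{k}(i_0)\equiv -(a-b+1)\pmod{pq}$ this is exactly the criterion ``$pq\mid A-a$ or $pq\mid A-b+1$'' that the paper obtains from Lemma~\ref{lem:Hab-vs-Emn} together with \cite{RV}*{(1.10)}. So up to that point you have re-derived, directly from \eqref{eq:E-pow-ser}, the same numerical non-vanishing criterion the paper quotes; this is a legitimate (and slightly more self-contained) route to that half of the argument.

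The genuine gap is the final step. The identification of this criterion with $\GL_2$-linkage is \emph{not} \cite{RV}*{(1.10)}: that formula concerns non-vanishing of stable cohomology of hooks, i.e.\ precisely the support you just computed, and says nothing about blocks. The comparison with Donkin's block description is exactly what you defer in your last paragraph as ``the main obstacle''; it is misattributed and then left undone, yet it is the actual content of the corollary. In the paper this step goes as follows: the implication ``$\Ext\neq 0\Rightarrow$ same block'' is free from general block theory, so only one direction of the combinatorial matching is needed; for it, one writes out Donkin's conditions for $(A,B)$ and $(a,b)$ to be linked for $\GL_2$ (the largest $p$-power $q$ dividing $a-b+1$ must equal the one dividing $A-B+1$, and either $\lambda_i\equiv\mu_i\pmod{pq}$ for $i=1,2$, or $\lambda_1-1\equiv\mu_2-2$ and $\lambda_2-2\equiv\mu_1-1\pmod{pq}$), and observes that the first case gives $pq\mid A-a$ and the second gives $pq\mid A-b+1$. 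Your plan, by contrast, aims at the full equivalence ``same block $\Leftrightarrow$ membership in the support,'' which requires both directions of the comparison with Donkin's rule (or else the block-theoretic vanishing input you never invoke); without carrying out that bookkeeping, the ``if and only if'' is not established.
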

Notice that although $\Ext^j(\bb{S}_{\ll},\bb{S}_{\mu}) = \Ext^j_{\GL_n}(\bb{S}_{\ll},\bb{S}_{\mu})$ for all $n\geq 2$, the property of $\ll=(A,B,0,\cdots)$ and $\mu=(a,b,0,\cdots)$ being in the same block of the category of $\GL_n$-representations depends on $n$. For instance in characteristic $p=2$ one has that $\ll=(7,0)$ and $\mu=(5,2)$ belong to different blocks for $\GL_2$ (hence the $\Ext$ groups vanish, see Example~\ref{ex:E3-tu}), but $(7,0,0)$ and $(5,2,0)$ belong to the same block for $\GL_3$, hence Corollary~\ref{cor:Ext-vs-block} cannot be extended beyond $\GL_2$. For a related example in the case of hooks see \cite{RV}*{Example~6.7}.

Our results provide then an alternative perspective on \cite{parker}*{Theorem~5.1} which describes recursively the $\Ext$ groups between Weyl modules for the algebraic group $\SL_2$. Indeed, if we write $\Delta(r)$ for the Weyl module of $\SL_2$ of highest weight~$r$ as in \cite{parker}, then $\bb{W}_{(a,b)}(\kk^2)=\Delta(a-b)$ and we have
\[\Ext^j(\bb{S}_{(A,B)},\bb{S}_{(a,b)}) = \Ext^j_{\SL_2}(\Delta(a-b),\Delta(A-B)).\]
Expanding \eqref{eq:def-Atu} according to the powers of $t$, we get
\[ \mc{A}(t,u) = 1+t\left(\sum_{i\geq 1}u^{p^i}\right)+t^2\left(\sum_{i\geq 1}u^{p^i}+\sum_{i>j\geq 1}u^{p^i+p^j}\right)+t^3\left(\sum_{i\geq j\geq 1}u^{p^i+p^j}+\sum_{i>j>k\geq 1}u^{p^i+p^j+p^k}\right)+\cdots\]
which combined with \eqref{eq:E-pow-ser} allows one to describe $\Ext^j_{\SL_2}(\Delta(r),\Delta(s))$ for small values of $j$, as in \cite{erdmann-ext1,cox-erdmann}.

\medskip

\noindent{\bf Short resolutions for Schur functors.} As illustrated in \cite{RV}, an effective way of computing stable cohomology for $\bb{S}_{\mu}$ is through simple, explicit resolutions of $\bb{S}_{\mu}$ by direct sums of tensor products of exterior power functors. The study of such resolutions (or of resolutions of Weyl functors by tensor products of divided powers) was initiated by Akin and Buchsbaum \cites{AB1,AB2}, but a general explicit construction was only found by Santana and Yudin \cite{san-yud}. One drawback of their construction is that the resolutions are quite long \cite{san-yud}*{Corollary~4.5}, and the next result suggests perhaps what the optimal size of such a resolution should~be (see Example~\ref{ex:res-W222} for some comparisons between our results and previously constructed resolutions).

\begin{theorem}\label{thm:short-res-Schur}
    Consider a partition $\mu$ and the associated Schur and Weyl functors $\bb{S}_{\mu}$ and $\bb{W}_{\mu}$. Write $d=|\mu|$ for the size of~$\mu$, and $\ell=\ell(\mu)$ for the number of parts of $\mu$. The following hold over any commutative ring $\kk$.
    \begin{enumerate}
     \item There exists a (projective) resolution of $\bb{W}_{\mu}$ by tensor products of divided powers, which has length at most $d-\mu_1$.
     \item There exists a resolution of $\bb{S}_{\mu}$ by tensor products of exterior powers, which has length at most $d-\ell$.
     \item $H^j_{st}\left(\bb{S}_{\mu}\right) = 0$ for $j<\ell$ (and $j>d$).
     \item $\Ext^j\left(\bb{S}_{\mu},\bw^d\right)=0$ for $j>d-\ell$ (and $j<0$).
    \end{enumerate}
\end{theorem}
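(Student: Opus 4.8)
The technical heart is part (2); once it is established, parts (1), (3), (4) follow formally. I would prove (2) by induction on the quantity $d-\ell$. The base case $d=\ell$ forces $\mu=(1^d)$, where $\bb{S}_\mu=\bw^d$ is its own resolution of length $0$. For the inductive step (so $\mu_1\geq 2$), let $\hat\mu$ be the partition obtained by deleting the first column of $\mu$, so $|\hat\mu|=d-\ell$ and $\hat\mu$ has at least one part. The Pieri rule for Schur functors — equivalently, the existence and ordering of Schur (good) filtrations — shows that $\bw^{\ell}\oo\bb{S}_{\hat\mu}$ has a filtration with factors the $\bb{S}_{\nu}$ for $\nu/\hat\mu$ a vertical strip of size $\ell$, with the dominance-largest factor $\bb{S}_{\mu}$ occurring as a quotient; this yields a short exact sequence
\[
0\lra N\lra \bw^{\ell}\oo\bb{S}_{\hat\mu}\lra \bb{S}_{\mu}\lra 0
\]
in which $N$ is filtered by the $\bb{S}_{\nu}$ with $\nu\neq\mu$. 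The combinatorial observation that powers the induction is that each such $\nu$ satisfies $\ell(\nu)\geq\ell+1$: the only way to add a size-$\ell$ vertical strip to $\hat\mu$ without entering a row below the $\ell$-th is to place one box in each of the first $\ell$ rows, and that produces exactly $\mu$. Consequently $|\nu|-\ell(\nu)\leq d-\ell-1$ and $|\hat\mu|-\ell(\hat\mu)=(d-\ell)-\ell(\hat\mu)\leq d-\ell-1$, so by induction $\bb{S}_{\hat\mu}$ (hence $\bw^{\ell}\oo\bb{S}_{\hat\mu}$) and each $\bb{S}_{\nu}$ admit resolutions by tensor products of exterior powers of length $\leq d-\ell-1$; splicing the latter along the filtration of $N$ (a costandard-filtered module) gives such a resolution of $N$ of length $\leq d-\ell-1$, and the mapping cone of a chain map lifting $N\hra\bw^\ell\oo\bb{S}_{\hat\mu}$ is then a resolution of $\bb{S}_{\mu}$ of length $\leq\max\{(d-\ell-1)+1,\ d-\ell-1\}=d-\ell$. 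Everything here is characteristic-free.

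Part (1) follows by transporting (2) through Koszul--Ringel duality. Apply the functor $\Theta$ of Remark~\ref{rem:Koszul duality} to the resolution produced by (2) for the conjugate partition $\mu'$. Tensor products of exterior powers are tilting — in particular costandard-filtered — and $\bb{S}_{\mu'}$ is costandard, while $\bw^d$ is $\Delta$-filtered; hence all the modules in sight are $\Theta$-acyclic and $\Theta$ is computed termwise. By the Akin--Buchsbaum passage from divided to exterior powers (\cite{AB2}) one has $\Theta(\bw^{a_1}\oo\cdots\oo\bw^{a_k})=\Gamma^{a_1}\oo\cdots\oo\Gamma^{a_k}$, and the Ringel-duality identification of standard and costandard objects gives $\Theta(\bb{S}_{\mu'})=\bb{W}_{\mu}$; so $\Theta$ carries the length-$\leq|\mu'|-\ell(\mu')=d-\mu_1$ resolution of $\bb{S}_{\mu'}$ to a projective resolution of $\bb{W}_{\mu}$ of the same length. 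For part (4): in the resolution $T_\bullet\to\bb{S}_{\mu}$ from (2) each $T_i$ is a sum of exterior-power tensors, hence $\Delta$-filtered, and $\bw^d$ is $\nabla$-filtered, so $\Ext^{>0}(T_i,\bw^d)=0$; therefore $\Ext^\bullet(\bb{S}_{\mu},\bw^d)$ is the cohomology of the complex $\Hom(T_\bullet,\bw^d)$, which is concentrated in degrees $0,\dots,d-\ell$, giving the vanishing for $j>d-\ell$ (and for $j<0$). For part (3): over a field it is equivalent to (4) via Theorem~\ref{thm:stcoh-transpose-duality}. In general, apply $\Xi$ to $T_\bullet\to\bb{S}_{\mu}$ instead; exterior-power tensors are $\Xi$-acyclic — this is part of the Koszul--Ringel duality formalism of \cites{Krause-duality,chalup-Adv}, and concretely amounts to their stable cohomology being concentrated in top degree, which for a single $\bw^a$ is the classical computation $H^j(\PP^N,\Omega^a)=\kk\cdot[j=a]$ (cf.\ Section~\ref{sec:stcoh-revisit}). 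Hence $\mathbf{L}\Xi(\bb{S}_{\mu})$ is represented by a complex in homological degrees $0,\dots,d-\ell$; since $\HH^j_{st}=\LL_{d-j}\Xi$ and $H^j_{st}(\bb{S}_{\mu})=\HH^j_{st}(\bb{S}_{\mu})(\kk)$, we get $H^j_{st}(\bb{S}_{\mu})=0$ for $d-j>d-\ell$, i.e.\ $j<\ell$, and also for $j>d$ since $\LL_{<0}\Xi=0$. (Alternatively, the integral statement descends from the case of fields by universal coefficients.)

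The one genuinely load-bearing step is the induction in the first paragraph. The two points that need care there are: (i) the Pieri/Schur-filtration input, together with the fact that the class of costandard-filtered modules admitting a length-$\leq L$ resolution by exterior-power tensors is closed under extensions — classical (Boffi, Akin--Buchsbaum, Donkin), but requiring attention over an arbitrary commutative ring; and (ii) the precise combinatorial claim that $\mu$ is the \emph{unique} length-$\ell$ partition occurring in $\bw^{\ell}\oo\bb{S}_{\hat\mu}$, which is exactly what forces the recursion to strictly decrease $d-\ell$ rather than merely not increase it. The homological inputs used in the other parts ($\Ext$-orthogonality of $\Delta$- and $\nabla$-filtered objects, termwise computation of Koszul--Ringel duality on costandard-filtered complexes, and $\Xi$-acyclicity of exterior-power tensors) are standard consequences of the highest-weight structure on $\RG$ and of Krause's results.
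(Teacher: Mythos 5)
Your strategy is correct, but it runs the induction on the opposite side from the paper, and that choice carries a real homological cost which the paper deliberately avoids. The paper proves (1) \emph{first}, by induction on $d-\mu_1$ using the Pieri sequence $0\to K\to D^{\mu_1}\oo\bb{W}_{\ol{\mu}}\to\bb{W}_{\mu}\to 0$: since tensor products of divided powers are projective generators of $\RG$, the splicing along the filtration of $K$ and the final cone/horseshoe step are unconditional, over any commutative $\kk$. Part (2) is then a one-line consequence: apply $\Xi$ to the divided-power resolution of $\bb{W}_{\mu'}$ and invoke \cite{Krause-duality}*{Proposition~4.16}. You instead prove (2) first by the conjugate induction (column removal, $0\to N\to\bw^{\ell}\oo\bb{S}_{\hat{\mu}}\to\bb{S}_{\mu}\to 0$); your combinatorial point --- that $\mu$ is the unique vertical-strip addition of size $\ell$ with at most $\ell$ rows --- is correct and is exactly the transpose of the paper's Pieri step. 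The cost is that exterior-power tensors are tilting but \emph{not} projective, so both the splicing of resolutions along the filtration of $N$ and the lifting of $N\hookrightarrow\bw^{\ell}\oo\bb{S}_{\hat{\mu}}$ to a chain map are obstructed by groups of the form $\Ext^1(\bw^{\ul{a}},Z)$, where $Z$ is a \emph{syzygy} of a previously constructed resolution, not one of the Schur-filtered modules themselves; such syzygies are not automatically costandard-filtered (the relevant $\Hom(\bb{W}_{\ll},-)$ sequences need not stay surjective). This is fixable --- strengthen the induction to carry the condition that all syzygies are Schur-filtered and check that the horseshoe and cone constructions preserve it, and handle general $\kk$ by base change from $\bb{Z}$, using that Schur-filtered syzygies have free weight spaces --- but it is genuine extra work that your write-up only gestures at, and it is precisely what the paper's decision to induct on the divided-power side renders unnecessary. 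Your deduction of (1) from (2) via $\Theta$ is also workable but goes against the grain: the natural direction is $(1)\Rightarrow(2)$ via the right-exact functor $\Xi$, which is what Krause's result is tailored for.

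For (3) and (4) the paper is lighter than your route. It proves (4) directly and independently of (1)--(3): by Kuhn duality and Theorem~\ref{thm:Ext-from-spec-complexes}(1), $\Ext^j(\bb{S}_{\mu},\bw^d)=\Ext^j(\bw^d,\bb{W}_{\mu})$ is the homology of the specialization complex $\bb{W}_{\mu}(\kk^{\bullet})^{full}$, whose terms vanish in homological degree $<\ell$ simply because $\bb{W}_{\mu}(\kk^k)=0$ for $k<\ell$; no $\Delta/\nabla$-orthogonality over general $\kk$ is needed. For (3) the paper evaluates the resolution from (2) at $\Omega$ only and uses \eqref{eq:coh-Omegai} together with Proposition~\ref{prop:kunneth}. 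Note that your stated justification for ``$\Xi$-acyclicity of exterior-power tensors'' (the computation of $H^j(\PP^N,\Omega^a)$) only covers the evaluation at $V=\kk$; the functor-level statement $\LL_{i}\Xi(\bw^{\ul{a}})=0$ for $i>0$ requires in addition the Cauchy filtration with Weyl functors placed in the $\Omega$-variable, as in the proof of Proposition~\ref{prop:stcoh-transpose-Dcomposition}. Since only the $V=\kk$ evaluation enters (3), this is an imprecision rather than a gap, but as phrased your claim is stronger than what you justify.
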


\medskip

In the special case $d=\ell$ we have $\mu=(1^d)$ and $\bb{S}_{\mu}\Omega = \Omega^d$, so the only non-vanishing cohomology occurs for $H^d_{st}(\bw^d)=\kk$. For a more interesting example, let $p=2$ and $\mu=(2^r)$, so that $\bb{S}_{\mu}=\Sym^{2^r}$. We have $d=2^r$, $\ell=1$, and it is shown in \cite{RV}*{Example~6.10} that in this case we have $H^j_{st}\left(\bb{S}_{\mu}\right)\neq 0$ for every $j$ in the interval $\ell\leq j\leq d$ (see \cite{akin-ext} for the corresponding $\Ext$ group result). For yet another example, notice that the groups in \eqref{eq:strange-duality} vanish for $j<0$ or $j>n$, which is consistent with the vanishing predicted by Theorem~\ref{thm:short-res-Schur}: if either $\mu=(2^n,1^{m-n})$ or $\mu=(n+1,1^{m-n})$, then we get $|\mu|-\ell(\mu) = n$.

\begin{remark}
    One can show that there is also a \emph{lower} bound on the length of a universal resolution of $\bb{W}_{\mu}$ as in Theorem~\ref{thm:short-res-Schur}(1), which comes from the characteristic $0$ constructions of Akin and Zelevinsky \cite{akinJT,zelevinsky}, and employs the connection with the Jacobi--Trudi identity. For partitions $\mu=(\mu_1,\cdots,\mu_{\ell})$ with $\mu_{\ell}\geq \ell-1$, their construction provides a resolution of length $\binom{\ell}{2}$. 
\end{remark}

\medskip

\noindent{\bf Organization.} In Section~\ref{sec:prelim} we recall some basic facts and notation regarding the theory of polynomial functors, and introduce the specialization complexes that are fundamental in our calculations of $\Ext$ groups. In Section~\ref{sec:stcoh-revisit} we discuss the relationship between stable cohomology and Koszul duality, and explain the proofs of Theorems~\ref{thm:stcoh-transpose-duality} and~\ref{thm:periodicity}, as well as that of Theorem~\ref{thm:short-res-Schur}. In Section~\ref{sec:hook-Ext} we prove Theorem~\ref{thm:introExtComputation} establishing invariance property for $\Ext$ groups associated with hook partitions. In Section~\ref{sec:explicit-Ext} we discuss the explicit $\Ext$ calculation in the case of hooks and $2$-row/column partitions, proving Theorem~\ref{thm:dimExt-hook-2row} and Corollary~\ref{cor:Ext-vs-block}.

\section{Preliminaries}\label{sec:prelim}

\subsection{Strict polynomial functors}
\label{subsec:poly-fun}

We let $\kk$ denote a commutative ring, and recall the theory of strict polynomial functors following \cite{Krause-duality}. We let $\Vek$ denote the category of finitely generated projective $\kk$-modules, and write $\Gamma^d_{\kk}$ for the category of degree $d$ \defi{divided powers}, whose objects are the same as for $\Vek$, and the morphisms
\[ \Hom_{\Gamma^d_{\kk}}(V,W) = \Hom_{\mf{S}_d}(V^{\oo d},W^{\oo d})\]
are defined to be the $\mf{S}_d$-equivariant morphisms between $V^{\oo d}$ and $W^{\oo d}$, where the action of the symmetric group $\mf{S}_d$ is by permuting the tensor factors. We let $\op{Mod}_{\kk}$ denote the category of $\kk$-modules, and write $\RG$ for the category of \defi{$\kk$-linear representations of $\Gamma^d_{\kk}$}, which consists of $\kk$-linear functors from $\Gamma^d_{\kk}$ to $\op{Mod}_{\kk}$. We will refer to this as the category of \defi{strict polynomial functors of degree $d$}, which in the case when $\kk$ is a field is equivalent to the category described in \cite{fri-sus}*{Section~2}. For $n\geq d$, the theory of strict polynomial functors of degree $d$ is further equivalent to that of representations of the Schur algebra $S_{\kk}(n,d)=\End_{\Gamma^d_{\kk}}(\kk^n)$, or to the category of degree $d$ polynomial representations of $\GL_n$ over $\kk$ \cites{fri-sus,green}. We will occasionally consider non-homogeneous  functors, which are just direct sums $\mc{P} = \bigoplus_d\mc{P}_d$, where $\mc{P}_d$ is polynomial of degree~$d$.

Given a partition $\ll=(\ll_1,\ll_2,\cdots)$ of $d$, denoted $\ll\vdash d$, we write $\bb{S}_{\ll}$ (resp. $\bb{W}_{\ll}$) for the \defi{Schur functor} (resp. \defi{Weyl functor}) associated to $\ll$, which is a polynomial functor of degree $d$. If we write $V^{\vee}=\Hom_{\kk}(V,\kk)$ then for a polynomial functor $\mc{P}\in\RG$, its \defi{Kuhn dual} $\mc{P}^{\#}$ is defined via
\[\mc{P}^{\#}(V)=\left(\mc{P}(V^{\vee})\right)^{\vee},\]
and an important instance of this identifies $\bb{W}_{\ll}=\bb{S}_{\ll}^{\#}$. We have moreover:
\begin{itemize}
\item If $\ll=(1^d)=(1,\cdots,1)$ then $\bb{W}_{\ll}=\bb{S}_{\ll}=\bw^d$ is the exterior power functor. 
\item If $\ll=(d)$ then $\bb{S}_{\ll}=\Sym^d$ is the symmetric power functor and $\bb{W}_{\ll}=D^d$ is the divided power functor.
\end{itemize}
For a tuple $\ul{d}=(d_1,\cdots,d_n)\in\bb{Z}^n_{\geq 0}$ we will write
\[ D^{\ul{d}} = D^{d_1} \oo D^{d_2} \oo \cdots \oo D^{d_n},\quad \bw^{\ul{d}} = \bw^{d_1} \oo \bw^{d_2} \oo \cdots \oo \bw^{d_n},\quad \Sym^{\ul{d}} = \Sym^{d_1} \oo \Sym^{d_2} \oo \cdots \oo \Sym^{d_n}.\]
The functors $D^{\ul{d}}$ with $|\ul{d}|=d_1+\cdots+d_n=d$ are projective generators for $\RG$, while $\Sym^{\ul{d}}$ are injective cogenerators. Using the action of the algebraic torus $(\kk^{\times})^n$ we get a weight space decomposition
\begin{equation}\label{eq:weight-dec-P}
 \mc{P}(\kk^n)= \bigoplus_{\ul{d}\in\bb{Z}^n_{\geq 0}}\mc{P}_{\ul{d}}.
\end{equation}
Note that since $\mc{P}\in\RG$, we have $\mc{P}_{\ul{d}}=0$ if $|\ul{d}|\neq d$. An alternative description of the weight spaces comes from the natural isomorphism \cite{Krause-book}*{Lemma~8.3.18}
\begin{equation}\label{eq:canonical-wtspace=Hom}
 \mc{P}_{\ul{d}}=\Hom(D^{\ul{d}},\mc{P})
\end{equation}
Given a polynomial functor $\mc{P}\in\RG$, the \defi{shift functor $\op{sh}(\mc{P})$} is a non-homogeneous functor defined by
\[ \op{sh}(\mc{P})(V) = \mc{P}(\kk\oplus V),\]
which then admits a decomposition
\begin{equation}\label{eq:shP-decomp}
 \op{sh}(\mc{P}) = \mc{P} \oplus \mc{P}^{(1)} \oplus \cdots,\quad\text{ where }\mc{P}^{(a)}\in{\operatorname{\bf Rep}\Gamma^{d-a}_{\kk}}.
\end{equation}
At the level of weight spaces we have an identification
\begin{equation}\label{eq:weight-shift}
 \mc{P}^{(a)}_{\ul{d}} = \mc{P}_{(a,d_1,\cdots,d_n)}.
\end{equation}

\subsection{Specialization complexes and Ext groups}
\label{subsec:spec-Ext}

This section is based on results from \cite{mal-resolutions}, \cite{mal-ste-hookWeyl}*{Section~2}, presented in a way that is most useful for our work. We fix a polynomial functor $\mc{P}$ of degree $d$. For each $n\geq 1$ we fix the standard basis $\{e_1,\cdots,e_n\}$ for $\kk^n$ and consider \defi{specialization maps} $\psi_i : \kk^{n+1} \lra \kk^n$ for $i=1,\cdots,n$, defined by
\begin{equation}\label{eq:spec-maps} 
\psi_i (e_j) = \begin{cases}
e_j & \text{ if }j\leq i, \\
e_{j-1} & \text{ if }j \geq i+1.
\end{cases}
\end{equation}
By functoriality we get maps $\mc{P}(\psi_i):\mc{P}(\kk^{n+1}) \lra \mc{P}(\kk^n)$, which by abuse of notation we will continue to denote by $\psi_i$. By taking the alternating sum of the specialization maps this defines a chain complex
\begin{equation}\label{eq:Pspec-exact}
\mc{P}(\kk^{\bullet}):\qquad \cdots \lra \mc{P}(\kk^{n+1}) \lra \mc{P}(\kk^n) \lra \mc{P}(\kk^{n-1})\lra \cdots \lra \mc{P}(\kk^1)
\end{equation}
with differential $\partial_{n+1}:\mc{P}(\kk^{n+1}) \lra \mc{P}(\kk^n)$ given by
\[ \partial_{n+1} = \psi_1-\psi_2+\cdots = \sum_{i=1}^n (-1)^{i-1}\cdot \psi_i.\]
The complex \eqref{eq:Pspec-exact} is exact (see the remarks below), but it contains several meaningful subcomplexes as described next. 

We say that a weight $\ul{d}\in\bb{Z}^n_{\geq 0}$ has \defi{full support} if $d_i > 0$ for all $i=1,\cdots,n$, and write
\[\psi_i(\ul{d}) = (d_1,\cdots,d_{i-1},d_i+d_{i+1},d_{i+2},\cdots,d_n),\]
noting that if $\ul{d}$ has full support then so does $\psi_i(\ul{d})$. Using the weight space decomposition \ref{eq:weight-dec-P}, we have that $\psi_i$ sends $\mc{P}_{\ul{d}}$ to $\mc{P}_{\psi_i(\ul{d})}$, and therefore we get a subcomplex $\mc{P}(\kk^{\bullet})^{full}$ of \eqref{eq:Pspec-exact}, where
\[ \mc{P}(\kk^n)^{full} = \bigoplus_{\ul{d}\in\bb{Z}^n_{>0}}\mc{P}_{\ul{d}}.\]
An alternative important realization of the specialization maps at the level of weight spaces arises as follows. The natural comultiplication map $D^{d_i+d_{i+1}} \lra D^{d_i}\oo D^{d_{i+1}}$ gives rise to maps
\begin{equation}\label{eq:comult-on-div-pows}
 \Delta_i : D^{\psi_i(\ul{d})} \lra D^{\ul{d}},\quad\text{ and }\quad \Hom(\Delta_i,\mc{P}) : \Hom(D^{\ul{d}},\mc{P}) \lra \Hom(D^{\psi_i(\ul{d})},\mc{P}).
\end{equation}
Using \eqref{eq:canonical-wtspace=Hom}, we get a canonical identification $\psi_i = \Hom(\Delta_i,\mc{P})$.

There is a decreasing filtration by subcomplexes
\begin{equation}\label{eq:filtr-Pfull-complex}
 \mc{P}(\kk^{\bullet})^{full} = \mc{F}^1_{\bullet} \supseteq \cdots \supseteq \mc{F}^d_{\bullet} \supseteq 0,
\end{equation}
where $\mc{F}^a_{\bullet} = \mc{F}^a_{\bullet}(\mc{P})$ is given by
\[ \mc{F}^a_n = \bigoplus_{\substack{\ul{d}\in\bb{Z}^n_{>0} \\ d_1\geq a}}\mc{P}_{\ul{d}}.\]
The associated graded complexes $\mf{gr}\mc{F}^a_{\bullet}=\mc{F}^a_{\bullet} / \mc{F}^{a+1}_{\bullet}$ induced by \eqref{eq:filtr-Pfull-complex} are given by
\begin{equation}\label{eq:grFan-weights}
 \mf{gr}\mc{F}^a_n = \bigoplus_{\substack{\ul{d}\in\bb{Z}^n_{>0} \\ d_1= a}}\mc{P}_{\ul{d}}.
\end{equation}
Using \eqref{eq:weight-shift}, we get an identification
\begin{equation}\label{eq:grFP-full-Pa}
 \mf{gr}\mc{F}^a_{\bullet+1} = \mc{P}^{(a)}(\kk^{\bullet})^{full}.
\end{equation}

\begin{theorem}\label{thm:Ext-from-spec-complexes}
  Suppose that $\mc{P}$ is a polynomial functor of degree $d=a+b$, and consider the associated complexes $\mc{F}^a_{\bullet}$ and $\mf{gr}\mc{F}^a_{\bullet}$ as above. We have for each cohomological degree $i$ an identification:
  \begin{enumerate}
   \item For the hook partition $\mu=(a,1^b)$,
   \[\Ext^i\left(\bb{W}_{\mu},\mc{P} \right) = H_{b+1-i}\left(\mc{F}^a_{\bullet}\right).\]
   \item $\Ext^i\left(D^a \oo \bw^b,\mc{P} \right) = H_{b+1-i}\left(\mf{gr}\mc{F}^a_{\bullet}\right) = \Ext^i\left(\bw^b,\mc{P}^{(a)} \right) $.
   \item Via the identifications (1), (2), the homology long exact sequence associated to the short exact sequence
   \[ 0 \lra \mc{F}^{a+1}_{\bullet} \lra \mc{F}^a_{\bullet} \lra \mf{gr}\mc{F}^a_{\bullet}\lra 0\]
   agrees up to a shift with the $\Ext(-,\mc{P})$ long exact sequence associated to the short exact sequence
   \[ 0 \lra \bb{W}_{(a+1,1^{b-1})} \lra D^a \oo \bw^b \lra \bb{W}_{\mu} \lra 0.\]
   More precisely, we have for each $i$ a commutative diagram
   \[
   \xymatrix{
   H_{b+1-i}(\mc{F}^{a+1}_{\bullet}) \ar@{=}[d] \ar[r] & H_{b+1-i}(\mc{F}^{a}_{\bullet}) \ar@{=}[d] \ar[r] &H_{b+1-i}(\mf{gr}\mc{F}^{a}_{\bullet}) \ar@{=}[d] \ar[r] & H_{b-i}(\mc{F}^{a+1}_{\bullet}) \ar@{=}[d] \\
   \Ext^{i-1}(\bb{W}_{(a+1,1^{b-1})},\mc{P}) \ar[r] & \Ext^i(\bb{W}_{\mu},\mc{P}) \ar[r] & \Ext^i(D^a \oo \bw^b,\mc{P}) \ar[r] & \Ext^i(\bb{W}_{(a+1,1^{b-1})},\mc{P}) 
   }
   \]
  \end{enumerate}
\end{theorem}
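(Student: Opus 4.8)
The plan is to realize the three complexes in play — $\mc{F}^a_\bullet$, $\mf{gr}\,\mc{F}^a_\bullet$, and $\mc{F}^{a+1}_\bullet$ — as $\Hom(-,\mc{P})$ applied to explicit complexes of \emph{projective} functors, and then to recognize those as projective resolutions. Using \eqref{eq:canonical-wtspace=Hom} together with the identification $\psi_i=\Hom(\Delta_i,\mc{P})$ of specialization maps with comultiplications, let $\mc{K}^a_\bullet$ be the complex with $\mc{K}^a_n=\bigoplus_{\ul{d}\in\bb{Z}^n_{>0},\,d_1\geq a}D^{\ul{d}}$ and $\mc{G}^a_\bullet$ the one with $\mc{G}^a_n=\bigoplus_{\ul{d}\in\bb{Z}^n_{>0},\,d_1=a}D^{\ul{d}}$, both with differential the alternating sum of the maps $\Delta_i\colon D^{\psi_i(\ul{d})}\to D^{\ul{d}}$. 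Then $\mc{F}^a_\bullet=\Hom(\mc{K}^a_\bullet,\mc{P})$ and $\mf{gr}\,\mc{F}^a_\bullet=\Hom(\mc{G}^a_\bullet,\mc{P})$, and the sequence $0\to\mc{F}^{a+1}_\bullet\to\mc{F}^a_\bullet\to\mf{gr}\,\mc{F}^a_\bullet\to0$ is $\Hom(-,\mc{P})$ of the termwise-split exact sequence $0\to\mc{G}^a_\bullet\to\mc{K}^a_\bullet\to\mc{K}^{a+1}_\bullet\to0$. Since each $D^{\ul{d}}$ is projective, the theorem reduces to identifying the homology of these complexes with the stated $\Ext$ groups.

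For (2), the key input is the ``divided-to-exterior'' resolution: for $\mc{Q}$ of degree $c$, the complex $\mc{R}^{(c)}_\bullet$ with $\mc{R}^{(c)}_i=\bigoplus_{\ul{d}\in\bb{Z}^{c-i}_{>0},\,|\ul{d}|=c}D^{\ul{d}}$, comultiplication differentials, and augmentation $D^{(1^c)}\onto\bw^c$, is a projective resolution of $\bw^c$; this is Akin--Buchsbaum \cite[Section~6]{AB2}, or equivalently the Kuhn dual of the exactness of the internal-degree-$c$ strand of the reduced bar complex of $\Sym$ (Koszulity of the symmetric algebra), so it follows by dualizing an exact complex of finitely generated projective $\kk$-modules. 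Now $\mc{G}^a_\bullet$ is, up to reindexing, $D^a\oo\mc{R}^{(b)}_\bullet$, and $D^a\oo(-)$ is exact and preserves projectives, so $\mc{G}^a_\bullet$ is a projective resolution of $D^a\oo\bw^b$; combined with \eqref{eq:grFP-full-Pa} this gives $\Ext^i(D^a\oo\bw^b,\mc{P})=H_{b+1-i}(\mf{gr}\,\mc{F}^a_\bullet)$. The remaining equality $\Ext^i(D^a\oo\bw^b,\mc{P})=\Ext^i(\bw^b,\mc{P}^{(a)})$ follows from the exact adjunction $\Hom(D^a\oo\mc{Q},\mc{P})=\Hom(\mc{Q},\mc{P}^{(a)})$, which on projective generators is exactly \eqref{eq:weight-shift} together with \eqref{eq:canonical-wtspace=Hom}.

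For (1) and (3) I would proceed by descending induction on $a$. The base case $a=d$ (so $b=0$) is trivial: $\mc{K}^d_\bullet=D^d$ is a resolution of $\bb{W}_{(d)}=D^d$. For the step I would use the classical short exact sequence $0\to\bb{W}_{(a+1,1^{b-1})}\xrightarrow{\delta}D^a\oo\bw^b\to\bb{W}_{(a,1^b)}\to0$ (see e.g.\ \cite{AB1}, \cite{mal-ste-hookWeyl}). The point is that, after the reindexing turning $\mc{K}^a_\bullet$ into a chain complex augmented over $\bb{W}_{(a,1^b)}$, the split exact sequence $0\to\mc{G}^a_\bullet\to\mc{K}^a_\bullet\to\mc{K}^{a+1}_\bullet\to0$ presents $\mc{K}^a_\bullet$ as the mapping cone of a chain map $\widetilde{\delta}\colon\mc{K}^{a+1}_\bullet\to\mc{G}^a_\bullet$ lifting $\delta$ — namely, $\widetilde{\delta}$ is the part of the specialization differential that comultiplies the first divided power and retains only the component in which its first factor becomes $D^a$. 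Granting this, the inductive hypothesis makes $\mc{K}^{a+1}_\bullet$ a resolution of $\bb{W}_{(a+1,1^{b-1})}$ and the previous step makes $\mc{G}^a_\bullet$ a resolution of $D^a\oo\bw^b$; since $\delta$ is injective with cokernel $\bb{W}_{(a,1^b)}$, the cone is quasi-isomorphic to $\bb{W}_{(a,1^b)}$, hence a projective resolution of it — this is (1). Applying $\Hom(-,\mc{P})$ to $0\to\mc{G}^a_\bullet\to\mc{K}^a_\bullet\to\mc{K}^{a+1}_\bullet\to0$ and passing to homology then reproduces, after reindexing, the $\Ext(-,\mc{P})$ long exact sequence of the functor sequence: this is the commutative ladder of (3), and (1) is its middle column.

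The main obstacle is the mapping-cone identification in the last step: one must check, signs included, that the alternating sum of comultiplications defining the differential of $\mc{K}^a_\bullet$ decomposes as the cone differential assembled from the differentials of $\mc{K}^{a+1}_\bullet$ and $\mc{G}^a_\bullet$ and a cross-term $\widetilde{\delta}$ inducing $\delta$ on $H_0$. This reduces to sorting each comultiplication $\Delta_i$ according to whether the newly created first part is $>a$, equal to $a$, or $<a$ (the last being zero in $\mc{K}^a_\bullet$), verifying that the ``$=a$'' contributions of the first comultiplication assemble into the natural map $\bb{W}_{(a+1,1^{b-1})}\to D^a\oo\bw^b$, and keeping the degree shifts straight; the other ingredients — the divided-to-exterior resolution, the adjunction, and the functor short exact sequence — are classical or available in the cited literature.
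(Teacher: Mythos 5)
Your proposal is correct, and its skeleton --- realizing $\mc{F}^a_{\bullet}$, $\mf{gr}\mc{F}^a_{\bullet}$, $\mc{F}^{a+1}_{\bullet}$ as $\Hom(-,\mc{P})$ of explicit complexes of divided-power projectives, and obtaining (3) by applying $\Hom(-,\mc{P})$ to the termwise-split short exact sequence of those complexes --- is exactly the paper's. Where you genuinely diverge is in how the key acyclicity facts are obtained: the paper simply cites \cite{mal-resolutions}*{Theorem 1} for the statement that $P_{\bullet}$ (your $\mc{K}^a_{\bullet}$) is a projective resolution of $\bb{W}_{(a,1^b)}$, with the subcomplex $P'_{\bullet}$ and the shifted quotient $\ol{P}_{\bullet+1}$ resolving $D^a\oo\bw^b$ and $\bb{W}_{(a+1,1^{b-1})}$, whereas you re-prove these from scratch: Kuhn-dualizing the degree-$b$ strand of the bar complex of $\Sym$ to resolve $\bw^b$ by divided powers, tensoring with the exact, projective-preserving functor $D^a\oo(-)$ to handle $\mf{gr}\mc{F}^a_{\bullet}$, and then running a descending induction on $a$ that exhibits $\mc{K}^a_{\bullet}$ as the cone of the cross-term $\widetilde{\delta}$ over the classical sequence $0\to\bb{W}_{(a+1,1^{b-1})}\to D^a\oo\bw^b\to\bb{W}_{(a,1^b)}\to 0$. (You also get the second equality in (2) from the shift adjunction $\Hom(D^a\oo\mc{Q},\mc{P})\cong\Hom(\mc{Q},\mc{P}^{(a)})$, where the paper instead applies (1) to $(1^b)$ together with \eqref{eq:grFP-full-Pa}; both are fine.) Your route buys self-containedness --- it essentially reconstructs the content of the cited resolution theorem --- at the cost of the bookkeeping you flag; the paper buys brevity by outsourcing it. The one step you must genuinely nail down is the one you identify: the cone argument requires the map induced on $H_0$ by $\widetilde{\delta}$ to be (up to sign) $\delta$ itself, not merely some map, since $H_1$ and $H_0$ of the cone are precisely its kernel and cokernel. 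This does check out: the only components of the differential of $\mc{K}^a_{\bullet}$ lowering $d_1$ from $\geq a+1$ to exactly $a$ come from the $(a,d_1-a)$ piece of the comultiplication on the first factor; in the bottom degree this is the map $D^{a+1}\oo T^{b-1}\to D^a\oo T^b$, which on $H_0$ factors through the Koszul map $\Upsilon$ and hence induces $\pm\delta$. With that verified, your ladder in (3) is the standard comparison of the homology long exact sequence of the split sequence of resolutions with the $\Ext(-,\mc{P})$ sequence, exactly as in the paper.
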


\begin{proof} It follows from \cite{mal-resolutions}*{Theorem 1} that $\bb{W}_{\mu}$ admits a projective resolution $\cdots \lra P_1\lra P_0$ by tensor products of divided powers, where
\[ P_i = \bigoplus_{\substack{\ul{d}\in\bb{Z}^{b+1-i}_{>0} \\ d_1\geq a}}D^{\ul{d}},\]
with differentials induced by comultiplication maps as in \eqref{eq:comult-on-div-pows}. Using the fact that $\Hom(D^{\ul{d}},\mc{P}) = \mc{P}_{\ul{d}}$, and $\Hom(\Delta_i,\mc{P}) = \psi_i$, we get that $\Hom(P_{\bullet},\mc{P}) = \mc{F}^a_{b+1-\bullet}$, hence (1) holds.

It follows from the construction in \cite{mal-resolutions}*{Theorem 1} that we have a subcomplex $P'_{\bullet}$ and a quotient $\ol{P}_{\bullet}=P_{\bullet}/P'_{\bullet}$, with terms given by
\[ P'_i = \bigoplus_{\substack{\ul{d}\in\bb{Z}^{b+1-i}_{>0} \\ d_1=a}}D^{\ul{d}},\quad\quad \ol{P}_i = \bigoplus_{\substack{\ul{d}\in\bb{Z}^{b+1-i}_{>0} \\ d_1\geq a+1}}D^{\ul{d}},\]
and that moreover $P'_{\bullet}$ is a resolution of $D^a \oo \bw^b$, and that $\ol{P}_{\bullet+1}$ is a resolution of $\bb{W}_{(a+1,1^{b-1})}$. We get as before that
\[  \Hom(P'_{\bullet},\mc{P}) = \mf{gr}\mc{F}^a_{b+1-\bullet}\]
hence the first equality in (2) holds. The second equality in (2) follows from the identifications
\[ H_{b+1-i}\left(\mf{gr}\mc{F}^a_{\bullet}\right) = H_{b-i}\left(\mf{gr}\mc{F}^a_{\bullet+1}\right) \overset{\eqref{eq:grFP-full-Pa}}{=} H_{b-i}\left(\mc{P}^{(a)}(\kk^{\bullet})\right)= \Ext^i\left(\bw^b,\mc{P}^{(a)} \right),\]
where the last equality follows by applying (1) to the partition $\mu=(1,1^{b-1})$.

The inclusion $P'_{\bullet}\subseteq P_{\bullet}$ lifts the natural map $D^a\oo\bw^b \lra \bb{W}_{\mu}$, and the short exact sequence
\[ 0 \lra P'_{\bullet} \lra P_{\bullet} \lra \ol{P}_{\bullet}\lra 0\]
represents the exact triangle
\[ D^a\oo\bw^b \lra \bb{W}_{\mu} \lra \bb{W}_{(a+1,1^{b-1})}[1]\]
in the derived category of $\RG$. Using that $\Hom(\ol{P}_{\bullet+1},\mc{P}) = \mc{F}^{a+1}_{b-\bullet}$ we get the desired conclusion.
\end{proof}

\begin{example}\label{ex:W22-Ext}
 If we let $\mc{P}=\bb{W}_{(2,2)}$ and use the standard tableau basis for $\bb{W}_{(2,2)}(\kk^n)$, then $\mc{P}(\kk^{\bullet})^{full}$ takes the form $\mc{P}(\kk^{4})^{full}\lra\mc{P}(\kk^{3})^{full}\lra\mc{P}(\kk^{2})^{full}$:
\[
\xymatrix@C=5.5cm{
 & {\young(11,23)} \ar[dr]|-{(-2)} & \\
 {\young(12,34)} \ar[ur]|-{2} \ar[r]|-(.25){(-1)} \ar[dr]|-(.2){2} & {\young(12,23)} & {\young(11,22)} \\
 {\young(13,24)} \ar[uur]|-(.75){(-1)} \ar[ur]|-(.25){(-1)} \ar[r]|-{(-1)}& {\young(12,33)} \ar[ur]|-{2} &  \\
}
\]
where for the differential $\mc{P}(\kk^{4})^{full}\lra\mc{P}(\kk^{3})^{full}$ we used the straightening relations
\[\Yvcentermath1
\psi_1\left(\young(13,24)\right)=\young(12,13) = -\young(11,23),\quad 
\psi_3\left(\young(13,24)\right)=\young(13,23) = -\young(12,33).
\]
We can then write the complex explicitly as
$$ \kk^2 \xra{\begin{pmatrix}
    2  & -1 \\ -1  & -1   \\ 2 & -1
\end{pmatrix}} \kk^3 \xra{\begin{pmatrix} -2 & 0 & 2 \end{pmatrix}} \kk^1,$$ 
which is homotopy equivalent to the complex
\begin{equation}\label{eq:Z121-complex}
\kk^1 \xra{\begin{pmatrix}
    3 \\ 3
\end{pmatrix}} \kk^2 \xra{\begin{pmatrix}
    -2 &2 
\end{pmatrix}} \kk^1.
\end{equation}
Notice that in the filtration \eqref{eq:filtr-Pfull-complex} we have $\mc{F}^3_{\bullet}=0$, and $\mc{F}^2_{\bullet}=\mf{gr}\mc{F}^2_{\bullet}$ is simply given by
\[\Yvcentermath1 
\xymatrix@C=3.5cm{
{\young(11,23)} \ar[r]|-{2} & {\young(11,22)}
}\qquad\qquad\text{or}\qquad\qquad \kk\xra{\times 2}\kk.
\]
Using Theorem~\ref{thm:Ext-from-spec-complexes}(1),(2) and the identification $\mc{P}^{(2)}$ as the skew Weyl functor $\bb{W}_{(2,2)/(2)}=\bb{W}_{(2)}=D^2$, it follows that the above complex computes any of the equivalent groups
\[ \Ext^{\bullet}\left(\bb{W}_{(2,1,1)},\bb{W}_{(2,2)}\right) = \Ext^{\bullet}\left(D^2\oo\bw^2,\bb{W}_{(2,2)}\right) = \Ext^{\bullet}\left(\bw^2,D^2\right).\]

If we take instead $\mc{P}=\bb{W}_{(3)}=D^3$ a divided power functor, then $\mc{P}(\kk^{\bullet})^{full}$ takes the form
\[
\xymatrix@C=3.5cm{
& {\young(112)} \ar[dr]|-{3}& \\ 
{\young(123)} \ar[ur]|-{2} \ar[dr]|-{-2} & & {\young(111)} \\
& {\young(122)} \ar[ur]|-{3} & \\ 
}
\]
hence this complex is isomorphic (up to a homological shift) to the dual of \eqref{eq:Z121-complex}. For $\kk$ a field, this illustrates the special case $d=m=2$ of \eqref{eq:strange-duality} if we use the identifications $\Ext(\bb{S}_{\ll},\bw^d) = \Ext(\bw^d,\bb{W}_{\ll})$ where $d=|\ll|$.
\end{example}

In a similar way to \eqref{eq:spec-maps}, it will be useful to consider \defi{generization maps} $\psi^i:\kk^{n-1}\lra \kk^{n}$, defined via
\begin{equation}\label{eq:gener-maps} 
\psi^i(e_j) = \begin{cases}
e_j & \text{ if }j \leq i, \\
e_{j+1} & \text{ if }j \geq i+1.
\end{cases}
\end{equation}
The specialization and generization maps satisfy the simplicial identities \cite{weibel}*{Corollary~8.1.4}
\begin{equation}\label{eq:cosimplicial-psi}
 \begin{aligned}
 	\psi^j\psi^i & = \psi^i\psi^{j-1}\quad\text{ if }i<j, \\
 	\psi_j\psi_i & = \psi_i\psi_{j+1}\quad\text{ if }i\leq j, \\
	\psi_j\psi^i &=\begin{cases}
		\psi^i\psi_{j-1} & \text{ if }i<j-1, \\
		\text{identity} & \text{ if }i=j-1\text{ or }i=j,\\
		\psi^{i-1}\psi_j & \text{ if }i\geq j+1.
	\end{cases}
 \end{aligned}
\end{equation}
One can check that \eqref{eq:Pspec-exact} is homotopically trivial, with a nullhomotopy provided by the generization map~$\psi^0$.

It will be useful to consider another subcomplex $\mc{P}(\kk^{\bullet})^{ext}$ of \eqref{eq:Pspec-exact}, defined by
\[ \mc{P}(\kk^n)^{ext} = \bigoplus_{\substack{\ul{d}\in\bb{Z}^n_{\geq 0} \\ d_1,d_n>0}}\mc{P}_{\ul{d}}\]
and note that $\mc{P}(\kk^{\bullet})^{full}$ is a subcomplex of $\mc{P}(\kk^{\bullet})^{ext}$. In other words, the complex $\mc{P}(\kk^n)^{ext}$ removes the full support condition on tableaux and only insists that the first and last entries of the weights $\ul{d}$ are positive; note that as a result, $\mc{P}(\kk^{\bullet})^{ext}$ is generally a complex of infinite length.

We define in analogy to \eqref{eq:filtr-Pfull-complex} a filtration
\begin{equation}\label{eq:filtr-Pext-complex}
 \mc{P}(\kk^{\bullet})^{ext} = \hat{\mc{F}}^1_{\bullet} \supseteq \cdots \supseteq \hat{\mc{F}}^d_{\bullet} \supseteq 0,
\end{equation}
where $\hat{\mc{F}}^a_{\bullet} = \hat{\mc{F}}^a_{\bullet}(\mc{P})$ is given by
\[ \hat{\mc{F}}^a_n = \bigoplus_{\substack{\ul{d}\in\bb{Z}^n_{\geq 0} \\ d_1\geq a,\ d_n>0}}\mc{P}_{\ul{d}}.\]
While most of the time the complexes $\mc{F}^a_{\bullet}$ will be sufficient for our purposes, we will need to refer to the extended complexes $\hat{\mc{F}}^a_{\bullet}$ in Section~\ref{subsec:twisted-Koszul}. The following shows that they encode the same homological information.

\begin{lemma}\label{lem:Fdegenerate}
 There exist acyclic subcomplexes $\mc{D}^a_{\bullet}$ of $\hat{\mc{F}}^a_{\bullet}$ with terms given by
 \begin{equation}\label{eq:def-D-a-n} 
 \mc{D}^a_{n} = \bigoplus_{\substack{\ul{d}\in\bb{Z}^n_{\geq 0} \\ d_1\geq a,\ d_n>0 \\ d_i=0\text{ for some }i}}\mc{P}_{\ul{d}}
 \end{equation}
 In particular we have $\hat{\mc{F}}^a_{\bullet} = \mc{F}^a_{\bullet} \oplus \mc{D}^a_{\bullet}$ and the inclusion $\mc{F}^a_{\bullet} \subseteq \hat{\mc{F}}^a_{\bullet}$ is a quasi-isomorphism.
\end{lemma}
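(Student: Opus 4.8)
The plan is to realize $\hat{\mc F}^a_{\bullet}$ as an internal direct sum of subcomplexes $\mc F^a_{\bullet}\oplus\mc D^a_{\bullet}$ with $\mc D^a_n$ as in \eqref{eq:def-D-a-n}, and then to prove that $\mc D^a_{\bullet}$ is acyclic by equipping it with a finite filtration whose associated graded pieces are contractible; the assertion that $\mc F^a_{\bullet}\hookrightarrow\hat{\mc F}^a_{\bullet}$ is a quasi-isomorphism is then automatic.

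First I would show that $\mc D^a_{\bullet}$ is a subcomplex. The differential $\partial=\sum_i(-1)^{i-1}\psi_i$ carries $\mc P_{\ul d}$ into $\bigoplus_i\mc P_{\psi_i(\ul d)}$, and since a single $\psi_i$ can eliminate at most one zero, the only way for $\psi_i(\ul d)$ to gain full support when $\ul d$ lacks it is when $\ul d$ has exactly one zero, say in slot $j$ with $2\leq j\leq n-1$ (the outer slots are positive since $d_1\geq a\geq 1$ and $d_n>0$), in which case $\psi_{j-1}(\ul d)=\psi_j(\ul d)$ is $\ul d$ with the $j$-th slot deleted. Here I would invoke the simplicial identities \eqref{eq:cosimplicial-psi}: since $\psi_{j-1}\psi^{j-1}=\psi_j\psi^{j-1}=\op{id}$ and $\mc P(\psi^{j-1})$ restricts to an isomorphism of that full-support weight space onto $\mc P_{\ul d}$ (it reinserts a zero in slot $j$), the maps $\psi_{j-1}$ and $\psi_j$ agree on $\mc P_{\ul d}$, so the two terms — which carry opposite signs in $\partial$ — cancel. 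Thus $\partial(\mc D^a_{\bullet})\subseteq\mc D^a_{\bullet}$, and moreover the same cancellation makes the projection $\hat{\mc F}^a_{\bullet}\onto\mc F^a_{\bullet}$ onto the full-support summand into a map of complexes. Since $\mc F^a_{\bullet}\subseteq\hat{\mc F}^a_{\bullet}$ is visibly a subcomplex and splits this projection, one gets $\hat{\mc F}^a_{\bullet}=\mc F^a_{\bullet}\oplus\mc D^a_{\bullet}$ with $\mc D^a_{\bullet}=\ker(\hat{\mc F}^a_{\bullet}\onto\mc F^a_{\bullet})$, which has precisely the stated terms.

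For the acyclicity I would filter $\mc D^a_{\bullet}$ by the position $i_0$ of the first zero of $\ul d$, letting $W^{\leq p}_{\bullet}$ be spanned by the $\mc P_{\ul d}$ with $i_0\leq p+1$; since the initial run of positive entries of $\ul d$ carries total weight $\leq d$, one has $i_0\leq d+1$, so $0=W^{\leq 0}_{\bullet}\subseteq\cdots\subseteq W^{\leq d}_{\bullet}=\mc D^a_{\bullet}$ is a finite filtration. By the bookkeeping above, after discarding the cancelling pair $\psi_{i_0-1},\psi_{i_0}$ the differential on $\mc P_{\ul d}$ only involves the $\psi_k$ with $k<i_0-1$ (which lower $i_0$ by one) and the $\psi_k$ with $k>i_0$ (which preserve $i_0$); hence each $W^{\leq p}_{\bullet}$ is a subcomplex, and the associated graded $\gr_p$ is spanned by the $\mc P_{\ul d}$ with $\ul d=(d_1,\dots,d_p,0,e_1,\dots,e_m)$, $d_1,\dots,d_p>0$, $d_1\geq a$, $e_m>0$, with differential the alternating sum of the $\psi_k$ merging two adjacent coordinates among $e_1,\dots,e_m$. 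Holding the positive prefix $(d_1,\dots,d_p)$ fixed, each resulting direct summand of $\gr_p$ is — up to sign and a homological shift — the subcomplex of a specialization complex of the form \eqref{eq:Pspec-exact} consisting of the weight spaces with positive last coordinate. The crucial point is that the nullhomotopy $\psi^0$ of \eqref{eq:Pspec-exact}, which inserts a zero into the first slot, preserves positivity of the last coordinate, so it restricts to a contracting homotopy on this subcomplex; hence each $\gr_p$ is contractible, and the finite filtration then forces $\mc D^a_{\bullet}$ to be acyclic.

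The hard part will be the bookkeeping in the first step: pinning down exactly when a specialization map $\psi_k$ destroys a zero, verifying via \eqref{eq:cosimplicial-psi} the sign cancellation of $\psi_{i_0-1}$ and $\psi_{i_0}$ on the relevant weight spaces, and confirming that after this cancellation the graded pieces of the filtration really are (twists of) truncated specialization complexes to which the $\psi^0$-homotopy applies. None of these points is deep, but the weight-space signs and the index shifts in \eqref{eq:cosimplicial-psi} need to be tracked with care.
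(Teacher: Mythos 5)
Your argument is correct, but it takes a more hands-on route than the paper. The paper realizes $\hat{\mc{F}}^a_{\bullet}$ as the augmented chain complex of a simplicial object with face operators $\psi_{i+1}$ and degeneracies $\psi^{i+1}$, identifies $\mc{D}^a_{\bullet}$ with the degenerate subcomplex, and then simply quotes the normalization theorem (the proof of Weibel's Theorem~8.3.8) for both the splitting $\hat{\mc{F}}^a_{\bullet}=\mc{F}^a_{\bullet}\oplus\mc{D}^a_{\bullet}$ and the acyclicity of $\mc{D}^a_{\bullet}$. What you do by hand --- the cancellation of $\psi_{j-1}$ and $\psi_j$ on a weight space with $d_j=0$, the filtration by the position of the first zero, and the contracting homotopy inserting an extra zero --- is exactly the content hiding behind that citation: your filtration is the standard filtration of the degenerate subcomplex by the first degeneracy used, and your homotopy is the extra-degeneracy trick; in fact $(-1)^{p+1}\mc{P}(\psi^{p+1})$ already contracts each fixed-prefix summand of $\gr_p$ directly, so you do not even need to pass through the shifted functor and the last-coordinate-positive subcomplex of \eqref{eq:Pspec-exact}, though that identification is also correct (via iterated use of \eqref{eq:weight-shift} and $\psi_k\psi^{p}=\psi^{p}\psi_{k-1}$ for $k>p+1$). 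The bookkeeping you flag does check out: \eqref{eq:cosimplicial-psi} gives $\psi_{j-1}\psi^{j-1}=\psi_j\psi^{j-1}=\op{id}$, and the surviving faces either lower or preserve the position of the first zero, as you claim. Both treatments rest on the same routine but unproved weight-space fact, namely that $\mc{P}(\psi^{j-1})$ maps $\mc{P}_{\ul{d}'}$ isomorphically onto $\mc{P}_{\ul{d}}$ when $d_j=0$ (it follows from \eqref{eq:canonical-wtspace=Hom} together with $D^0=\kk$); this is what legitimizes your cancellation and, in the paper, the identification of \eqref{eq:def-D-a-n} with the span of the degeneracy images. The trade-off is the expected one: the paper's packaging is shorter and outsources the sign bookkeeping to the simplicial formalism, while your version is self-contained and makes the mechanism explicit.
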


\begin{proof} We construct an augmented simplicial set $A_{\bullet}$ (see \cite{weibel}*{Proposition~8.1.3, and Augmented Objects 8.4.6}), where $A_{n-2}=\hat{\mc{F}}^a_n$ for $n\geq 1$. The augmentation $\epsilon:A_0\lra A_{-1}$ is the specialization $\psi_1: \hat{\mc{F}}^a_2 \lra \hat{\mc{F}}^a_1$, and the face operators $\partial_i : A_n \lra A_{n-1}$ ($n\geq 1$) and degeneracy operators $\sigma_i:A_n\lra A_{n+1}$ ($n\geq 0$) are
\[ \partial_i = \psi_{i+1}\quad\text{ and }\quad \sigma_i = \psi^{i+1} \quad\text{ for }i=0,1,\cdots,n.\]
It follows that the terms \eqref{eq:def-D-a-n} are the subspaces generated by the image of the degeneracies $\sigma_i$, and therefore they form an acyclic subcomplex by the proof of \cite{weibel}*{Theorem~8.3.8}. By construction we have $\hat{\mc{F}}^a_{\bullet} = \mc{F}^a_{\bullet} \oplus \mc{D}^a_{\bullet}$ hence the inclusion $\mc{F}^a_{\bullet} \subseteq \hat{\mc{F}}^a_{\bullet}$ is a quasi-isomorphism.
\end{proof}

\section{Stable cohomology and Koszul--Ringel duality}
\label{sec:stcoh-revisit}

The goal of this section is to revisit our prior work on stable cohomology \cite{RV} in order to extend it over an arbitrary commutative base ring~$\kk$, and to explain its close relation to Koszul--Ringel duality as developed by Krause \cite{Krause-duality} (following \cites{chalup-Adv,touze}). We fix the category $\RG$ of polynomial functors of degree $d$ over a commutative ring $\kk$, as in Section~\ref{subsec:poly-fun}. We consider a projective space $\PP=\PP^{N}_{\kk}$ where $N\geq d$, as well as the corresponding Euler sequence
\begin{equation}\label{eq:Euler-sequence} 
0 \lra \Omega \lra \mc{O}_\PP(-1)^{\oplus(N+1)} \lra \mc{O}_\PP \lra 0,
\end{equation}
where $\Omega$ denotes the cotangent sheaf on $\PP$. We define a functor $\Xi:\RG\lra \RG$ via
\begin{equation}\label{eq:def-Xi-RG}
 \Xi(\mc{P})(-) = H^d\left(\PP,\mc{P}(\Omega\oo -)\right)
\end{equation}
We will show that $\Xi$ is independent of the dimension $N\geq d$ of $\PP$, and that it coincides with the functor $\bw^d\oo_{\Gamma^d_{\kk}}-$ in \cite{Krause-duality}.

\subsection{Basics on (stable) cohomology}
\label{subsec:stcoh-basics}

We start by recalling from \cite{hartshorne}*{Theorem~III.5.1} that
\begin{equation}\label{eq:HO-i=0}
 H^j\left(\PP,\mc{O}_{\PP}(-i)\right) = 0\text{ for all $j$ and all }i=1,\cdots,N,
\end{equation}
and that the only non-vanishing cohomology for $\mc{O}_{\PP}$ is $H^0\left(\PP,\mc{O}_{\PP}\right)=\kk$. We write $\Omega^i=\bw^i\Omega$ and obtain from \cite{hartshorne}*{Exercise~II.5.16} and \eqref{eq:Euler-sequence} short exact sequences
\[ 0 \lra \Omega^i \lra \mc{O}_\PP(-i)^{\oplus{N+1\choose i}} \lra \Omega^{i-1}\lra 0,\]
from which we conclude by induction that if $i\leq N$ (and in particular for $i=d$)
\begin{equation}\label{eq:coh-Omegai}
 H^i\left(\PP,\Omega^i\right)=\kk,\quad H^j\left(\PP,\Omega^i\right)= 0\text{ for }j\neq i.
\end{equation}
Notice that $\Omega^d=\bb{W}_{\mu}\Omega$ where $\mu=(1,\cdots,1)=(1^d)$. For the remaining partitions of $d$ we have the following (see also \cite{RV}*{Theorem~4.5}).

\begin{lemma}\label{lem:Weyl-Omega}
    If $\mu$ is a partition of $d$ with $\mu_1\geq 2$, and if $N=\dim\PP \geq d$, then
    \begin{equation}\label{eq:Hj-WeylOmega=0} 
    H^j\left(\PP,\bb{W}_{\mu}\Omega\right) = 0\text{ for all }j.
    \end{equation}
\end{lemma}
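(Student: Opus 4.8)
The plan is to reduce the statement to the vanishing $H^j(\PP,\Omega^i)=0$ for $j\neq i$ from \eqref{eq:coh-Omegai} by resolving $\bb{W}_{\mu}\Omega$ in a way that only involves tensor products of exterior powers of $\Omega$ of bounded length. First I would invoke the results of Section~\ref{subsec:spec-Ext} (equivalently, the construction of \cite{mal-resolutions}*{Theorem~1}, applied after Kuhn duality, or directly the Akin--Buchsbaum-type resolutions \cite{AB1}): the Weyl functor $\bb{W}_{\mu}$ admits a finite resolution $0\lra C_m\lra\cdots\lra C_1\lra C_0\lra\bb{W}_{\mu}\lra 0$ in which each $C_i$ is a direct sum of functors of the form $\bw^{\ul{d}}=\bw^{d_1}\oo\cdots\oo\bw^{d_n}$ with $|\ul{d}|=d$. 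The key numerical point is that because $\mu_1\geq 2$, \emph{every} weight $\ul{d}$ appearing in this resolution has all parts $d_i\leq N$ — indeed $d_i\leq d\leq N$ automatically — and, crucially, at least one part is $\geq 2$, so that no $C_i$ contains a copy of $\bw^1\oo\cdots\oo\bw^1 = \Omega^1\oo\cdots\oo\Omega^1$ realized via $\Omega^d$; more to the point, each tensor factor $\bw^{d_j}$ with $d_j\geq 1$ applied to $\Omega$ gives $\Omega^{d_j}$ with $H^k(\PP,\Omega^{d_j})$ concentrated in degree $d_j\leq d < N+1$. This is what makes the Künneth computation below collapse favorably.

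Next I would apply the (exact) functor $-\,(\Omega)$ and take hypercohomology. Since $\Omega$ is locally free, the resolution $C_{\bullet}(\Omega)\lra\bb{W}_{\mu}\Omega$ is a resolution of locally free sheaves, so there is a hypercohomology spectral sequence
\[
E_1^{s,t} = H^t\bigl(\PP, C_{-s}(\Omega)\bigr) \Longrightarrow H^{s+t}\bigl(\PP,\bb{W}_{\mu}\Omega\bigr).
\]
By the Künneth formula over $\kk$ (the sheaves involved are flat, being locally free, so the Künneth spectral sequence degenerates to the usual Tor exact sequences — here everything is free since $H^*(\PP,\Omega^i)$ is $\kk$ in a single degree) together with \eqref{eq:coh-Omegai} and \eqref{eq:HO-i=0}, the cohomology of a summand $\bw^{d_1}\oo\cdots\oo\bw^{d_n}$ applied to $\Omega$ is $\kk$ concentrated in total degree $d_1+\cdots+d_n = d$ when every $d_j\geq 1$, and is $0$ in all degrees otherwise (a factor $\bw^0=\mathbf{1}$ contributes $\mc O_\PP$, still fine; the vanishing is forced as soon as some $d_j$ makes an $H^0$ term appear against an $H^{d_j}$ term — one checks the total degree is still $d$, so in fact every summand contributes only in total degree $d$). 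Thus $E_1^{s,t}=0$ unless $s+t=d$, i.e.\ the spectral sequence is concentrated on the single antidiagonal $s+t=d$.

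The final step is to observe that on this antidiagonal the $E_1$-page is precisely the complex $H^d(\PP, C_{\bullet}(\Omega))$, which by the multiplicativity of cohomology on the antidiagonal is identified (up to the bookkeeping of \eqref{eq:coh-Omegai}) with the original resolution $C_{\bullet}$ evaluated on $\kk$ in the appropriate sense — but more cleanly, I would argue as follows: since the spectral sequence collapses onto $s+t=d$, we get $H^j(\PP,\bb{W}_{\mu}\Omega)=0$ automatically for $j\neq d$, and the single possibly-nonzero group $H^d(\PP,\bb{W}_{\mu}\Omega)$ is the homology in position $s=0$ of the complex $E_1^{\bullet,d-\bullet}$. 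By definition this complex computes $H^d(\PP,\bb{W}_{\mu}(\Omega))=\Xi(\bb{W}_{\mu})$, and by Theorem~\ref{thm:stcoh-transpose-duality} (or directly: $\Xi=\bw^d\oo_{\Gamma^d_{\kk}}-$ is right exact and $\bw^d\oo_{\Gamma^d_{\kk}}\bb{W}_{\mu}=0$ for $\mu\neq(1^d)$, which is the Akin--Buchsbaum computation recalled after \eqref{eq:defXi-RG}, equivalently the classical fact that the transpose of $\bb{W}_{\mu}$ in the Kuhn-dual/Ringel-dual sense is $\bb{S}_{\mu'}$ and pairs to zero against $\bw^d$ unless $\mu'=(d)$), this group vanishes as well. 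Hence $H^j(\PP,\bb{W}_{\mu}\Omega)=0$ for all $j$.

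The main obstacle is ensuring the resolution $C_{\bullet}$ behaves uniformly in the base ring $\kk$ and that the Künneth computation is valid over a general commutative ring; the former is exactly what the cited results of Maliakas--Stergiopoulou / Akin--Buchsbaum guarantee (a resolution by tensor products of exterior powers over $\bb{Z}$, hence over any $\kk$ by base change, using that the terms are $\kk$-flat), and the latter is unproblematic because every sheaf in sight is locally free and every cohomology group encountered is a free $\kk$-module, so Künneth holds without Tor corrections. A secondary point to handle carefully is the identification of the $s=0$ homology of the antidiagonal complex with $\Xi(\bb{W}_{\mu})$ and the vanishing of the latter — but this is precisely the content recalled in the discussion following \eqref{eq:def-Xi-RG} and formalized in Theorem~\ref{thm:stcoh-transpose-duality}, so it may be cited rather than reproved; alternatively one can avoid \emph{any} appeal to $\Xi$ by instead using a resolution tailored so that $C_0$ already has no "all-ones" weight, forcing $E_1^{0,d}=0$ directly.
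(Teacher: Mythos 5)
There is a genuine gap, and it occurs already in your first step. You claim that $\bb{W}_{\mu}$ admits a (finite) left resolution by direct sums of functors $\bw^{\ul{d}}$. No such resolution exists in general: tensor products of exterior powers resolve the \emph{Schur} functors $\bb{S}_{\mu}$ (this is Theorem~\ref{thm:short-res-Schur}(2)), whereas the Weyl functors $\bb{W}_{\mu}$ are resolved by tensor products of \emph{divided} powers (Theorem~\ref{thm:short-res-Schur}(1)). Concretely, for $\mu=(2)$ in characteristic $2$ (a case covered by the lemma) there is not even a surjection from a direct sum of $\bw^{\ul{d}}$'s onto $\bb{W}_{(2)}=D^2$: every natural transformation $T^2\lra D^2$ or $\bw^2\lra D^2$ sends $v\oo w$ (resp.\ $v\wedge w$) to a scalar multiple of the product $vw$, and since $v\cdot v=2v^{(2)}$ the span of all such products misses $e_1^{(2)}$ in characteristic $2$. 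The sources you cite (\cite{mal-resolutions} after Kuhn duality, \cite{AB1}) give divided-power resolutions of $\bb{W}_{\mu}$ or symmetric-power coresolutions of $\bb{S}_{\mu}$, not exterior-power resolutions of $\bb{W}_{\mu}$; and if you replaced your $C_\bullet$ by a divided-power resolution you would need the cohomology of $D^{\ul{d}}\Omega$, which is Corollary~\ref{cor:kunneth-divided} --- proved in the paper \emph{from} the lemma you are trying to establish.

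Two further steps would also fail as written. First, there is no K\"unneth formula for the cohomology of a tensor product of sheaves on the \emph{same} space (e.g.\ $H^0(\PP^1,\mc{O}(1)\oo\mc{O}(-1))=\kk$ while $H^*(\mc{O}(1))\oo H^*(\mc{O}(-1))=0$); local freeness does not help. The statement that $H^*(\PP,\Omega^{d_1}\oo\cdots\oo\Omega^{d_n})$ is concentrated in degree $d$ is a nontrivial fact that in this paper is deduced from Lemma~\ref{lem:Weyl-Omega} (via Corollary~\ref{cor:kunneth-divided}, Proposition~\ref{prop:kunneth}), so invoking it here is circular --- as is the appeal to Theorem~\ref{thm:stcoh-transpose-duality}, whose proof rests on this lemma; note also that $\Xi(\bb{W}_{\mu})=\bb{S}_{\mu'}$ is \emph{not} the zero functor (only its value at $\kk^1$ vanishes when $\mu_1\geq 2$), and your fallback of arranging ``no all-ones weight in $C_0$'' does not help, since \emph{every} $\bw^{\ul{d}}$ with full support contributes a $\kk$ in degree $d$. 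The paper's actual proof sidesteps all of this: by \cite{ABW}*{Corollary~V.1.15} the sheaf $\bb{W}_{\mu}\Omega$ has an explicit right resolution \eqref{eq:def-Gi} whose terms are free $\kk$-modules tensored with $\mc{O}_{\PP}(-|\mu|+i)$ for $0\leq i\leq \mu_1'$; since $\mu_1\geq 2$ these twists all lie in $[-N,-1]$, so each term has vanishing cohomology by \eqref{eq:HO-i=0}, and the lemma follows at once.
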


\begin{proof} As explained in \cite{RV}*{Section~4.6}, it follows from \cite{ABW}*{Corollary~V.1.15} that there exists a right resolution of $\bb{W}_{\mu}\Omega$ given by the complex $\mc{G}^{\bullet}$, where the non-zero terms are
\begin{equation}\label{eq:def-Gi}
 \mc{G}^i = \bb{W}_{\mu/(1^i)}\left(\kk^{N+1}\right) \oo \mc{O}_{\PP}(-|\mu|+i)\quad \text{ for }i=0,\cdots,\mu'_1.
\end{equation}
Since $\mu_1\geq 2$, it follows that $|\mu|>\mu'_1$ and therefore
\[ -N \leq -d \leq -|\mu|+i < 0\quad\text{ if }0\leq i\leq \mu'_1.\]
We get using \eqref{eq:HO-i=0} that each $\mc{G}^i$ has vanishing cohomology, hence the same is true about $\bb{W}_{\mu}\Omega$.
\end{proof}

\begin{corollary}\label{cor:kunneth-divided}
 Let $\ul{d}\in\bb{Z}^n_{>0}$ with $d_1+\cdots+d_n=d$, and assume $N=\dim\PP \geq d$. If $\ul{d}\neq(1^d)$ then
 \[ H^j\left(\PP,D^{\ul{d}}\Omega\right)=0\quad\text{ for all }j.\]
 If $\ul{d}=(1^d)$ then $D^{\ul{d}}\Omega=\Omega^{\oo d}$ and we have a natural action of the symmetric group $\mf{S}_d$ by permuting the factors. Then $H^d\left(\PP,\Omega^{\oo d}\right)=\kk$ is isomorphic to the sign representation of $\mf{S}_d$, and $H^j\left(\PP,\Omega^{\oo d}\right)=0$ for $j\neq d$.
\end{corollary}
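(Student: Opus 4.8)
The plan is to deduce both statements from the Künneth formula together with the cohomology computations already available. First I would treat the case $\ul{d}\neq(1^d)$: here some $d_i\geq 2$, so we may single out that factor and write $D^{\ul{d}}\Omega = D^{d_i}\Omega \oo \left(\bigotimes_{j\neq i} D^{d_j}\Omega\right)$, a tensor product of coherent sheaves on $\PP$ (the external tensor product over the same $\PP$, i.e.\ the ordinary tensor product of sheaves). Since $\Omega$ is locally free, each tensor factor is locally free, so the Künneth spectral sequence degenerates (or one simply notes that over a field each term has a Künneth decomposition, and over a general $\kk$ one uses that the relevant cohomology of $\PP^N_{\kk}$ is obtained by base change from $\bb{Z}$ as in \cite{hartshorne}*{Theorem~III.5.1}, so everything is flat). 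The key input is Lemma~\ref{lem:Weyl-Omega} applied to the partition $\mu=(d_i)$ with $\mu_1 = d_i\geq 2$ and $N\geq d\geq d_i$, which gives $H^{\ast}\left(\PP, D^{d_i}\Omega\right) = H^{\ast}\left(\PP,\bb{W}_{(d_i)}\Omega\right) = 0$. Tensoring a complex computing $\mathbf{R}\Gamma$ of this factor (which is quasi-isomorphic to $0$) with the perfect complexes computing $\mathbf{R}\Gamma$ of the remaining factors yields $\mathbf{R}\Gamma\left(\PP, D^{\ul{d}}\Omega\right) \simeq 0$, hence $H^j\left(\PP, D^{\ul{d}}\Omega\right)=0$ for all $j$.

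For the case $\ul{d}=(1^d)$, the identity $D^1 = \id = \bw^1$ gives $D^{\ul{d}}\Omega = \Omega^{\oo d}$, the $d$-fold tensor power, with its evident $\mf{S}_d$-action permuting factors. By Künneth (again justified by flatness/base change from $\bb{Z}$), $H^{\bullet}\left(\PP,\Omega^{\oo d}\right) = H^{\bullet}\left(\PP,\Omega\right)^{\oo d}$ as graded vector spaces, and \eqref{eq:coh-Omegai} with $i=1$ (valid since $1\leq N$) says $H^{\bullet}\left(\PP,\Omega\right)$ is $\kk$ concentrated in degree $1$. Hence $H^{\bullet}\left(\PP,\Omega^{\oo d}\right)$ is $\kk$ concentrated in degree $d$, which gives the vanishing for $j\neq d$. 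The remaining point is to identify the $\mf{S}_d$-action on the one-dimensional space $H^d\left(\PP,\Omega^{\oo d}\right) = \left(H^1(\PP,\Omega)\right)^{\oo d}$: the Künneth isomorphism is $\mf{S}_d$-equivariant for the action permuting tensor factors, \emph{but} the Künneth sign convention for cohomology inserts a Koszul sign $(-1)^{|x||y|}$ when transposing classes, and each class here sits in odd cohomological degree $1$; therefore a transposition of two adjacent factors acts by $-1$, and an arbitrary permutation acts by its sign. This identifies $H^d\left(\PP,\Omega^{\oo d}\right)$ with the sign representation of $\mf{S}_d$.

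The main obstacle is the bookkeeping in this last step: one must be careful that the $\mf{S}_d$-equivariant form of the Künneth isomorphism really does carry the Koszul sign (this is exactly the mechanism by which $\bb{W}_{(1^d)}=\bw^d$ rather than $\Sym^d$ appears on the level of cohomology), and that this is compatible with the conventions under which $D^{\ul{d}}$ is a projective generator of $\RG$. Over a general commutative ring the cleanest way to handle both the Künneth formula and the sign computation is to reduce to $\kk=\bb{Z}$ (where $H^1(\PP^N_{\bb{Z}},\Omega)\cong\bb{Z}$ is free of rank one) by flat base change, run the graded-commutative Künneth formula there, and then base change back; alternatively, one can avoid the issue entirely by invoking the Euler-sequence resolution of $\Omega^{\oo d}$ factor-by-factor and tracking the sign through the induced maps, but the Künneth argument is shorter. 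Everything else is a direct citation of \eqref{eq:HO-i=0}, \eqref{eq:coh-Omegai}, and Lemma~\ref{lem:Weyl-Omega}.
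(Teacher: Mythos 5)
There is a genuine gap, and it sits at the very center of your argument: there is no K\"unneth formula (or K\"unneth spectral sequence) for the cohomology of an \emph{ordinary} tensor product of sheaves on the same space. K\"unneth computes $H^{\bullet}(X\times Y,\mc{F}\boxtimes\mc{G})$ from $H^{\bullet}(X,\mc{F})$ and $H^{\bullet}(Y,\mc{G})$; your ``external tensor product over the same $\PP$, i.e.\ the ordinary tensor product'' is the restriction of $\mc{F}\boxtimes\mc{G}$ to the diagonal, and restriction to the diagonal changes cohomology completely. In particular the step ``one factor has $\mathbf{R}\Gamma\simeq 0$, hence the tensor product has $\mathbf{R}\Gamma\simeq 0$'' is false: on $\PP^1$ one has $\mathbf{R}\Gamma(\mc{O}_{\PP}(-1))\simeq 0$ while $\mc{O}_{\PP}(1)\oo\mc{O}_{\PP}(-1)=\mc{O}_{\PP}$ has $H^0=\kk$. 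Local freeness and flat base change from $\bb{Z}$ do not repair this, since the failure has nothing to do with the base ring. The same objection applies to your second step: the identity $H^{\bullet}(\PP,\Omega^{\oo d})=H^{\bullet}(\PP,\Omega)^{\oo d}$ is essentially the statement being proved (the paper explicitly calls the corollary a ``trivial version of the K\"unneth formula'' for this situation), so it cannot be quoted as a general principle; and consequently the sign computation via Koszul signs in a K\"unneth isomorphism has no foundation as written.

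The paper's proof goes differently: for $\ul{d}\neq(1^d)$ it uses Pieri's rule to filter $D^{\ul{d}}\Omega$ with composition factors $\bb{W}_{\mu}\Omega$ where $\mu_1\geq\max(d_i)\geq 2$, and then Lemma~\ref{lem:Weyl-Omega} kills each factor; for $\ul{d}=(1^d)$ it uses the exterior multiplication $\Omega^{\oo d}\lra\Omega^d$, whose kernel is filtered by $\bb{W}_{\mu}\Omega$ with $\mu_1\geq 2$, so the map is an isomorphism on cohomology by Lemma~\ref{lem:Weyl-Omega} and \eqref{eq:coh-Omegai}; the sign representation then comes from the observation that this map is $\mf{S}_d$-equivariant when $\sigma$ acts on $\Omega^d$ by $\sgn(\sigma)$. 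Your parenthetical alternative is the one that can be made to work: replace each tensor factor by its two-term complex of cohomology-acyclic twists (the Euler/Koszul truncation for $\Omega$, or the $\mc{G}^{\bullet}$ resolution from the proof of Lemma~\ref{lem:Weyl-Omega} for $D^{d_i}\Omega$), tensor these complexes of locally free sheaves, and take hypercohomology; all line bundle twists appearing lie in the range where \eqref{eq:HO-i=0} applies except a single copy of $\mc{O}_{\PP}$ in the case $\ul{d}=(1^d)$. But that is an argument about resolutions of the sheaves, not about tensoring complexes that compute $\mathbf{R}\Gamma$, and it still requires a separate, careful identification of the $\mf{S}_d$-action, for which the paper's equivariant map $\Omega^{\oo d}\lra\Omega^d$ is the cleanest device.
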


\begin{proof} It follows from Pieri's rule and induction (see also \cite{Krause-book}*{Theorem~8.4.11}, \cite{kou}*{Theorem~2.6}) that there exists a filtration of $D^{\ul{d}}\Omega$ with composition factors $\bb{W}_{\mu}\Omega$, where $\mu_1\geq\max(d_i)$. It follows from Lemma~\ref{lem:Weyl-Omega} that if $\ul{d}\neq(1^d)$ then $D^{\ul{d}}\Omega$ has vanishing cohomology. 

Suppose now that $\ul{d}=(1^d)$, and consider the exterior multiplication map
\begin{equation}\label{eq:ext-mult-Omega}
\Omega^{\oo d} \lra \Omega^d.
\end{equation}
This map is surjective, and similarly to the previous paragraph, the kernel has a filtration with composition factors $\bb{W}_{\mu}\Omega$, where $\mu_1\geq 2$. It follows from Lemma~\ref{lem:Weyl-Omega} that \eqref{eq:ext-mult-Omega} induces an isomorphism in cohomology, hence $H^d\left(\PP,\Omega^{\oo d}\right)=\kk$ and $H^j\left(\PP,\Omega^{\oo d}\right)=0$ for $j\neq d$. The map \eqref{eq:ext-mult-Omega} is $\mf{S}_d$-equivariant if we let $\sigma\in\mf{S}_d$ act on $\Omega^d$ as scalar multiplication by $\op{sgn}(\sigma)$. The induced map in cohomology is therefore an isomorphism of $\mf{S}_d$-representations. Since scalar multiplication on a sheaf induces multiplication by the same scalar in cohomology, $H^d\left(\PP,\Omega^d\right)=\kk$ is the sign representation of $\mf{S}_d$, and the desired conclusion follows.
\end{proof}

As a consequence of the constructions above we get the following stable cohomology functors.

\begin{theorem}\label{thm:Hjst-to-Modk}
 For each $j$ there is a \defi{stable cohomology functor}
 \[ H^j_{st}:\RG\lra \Mk,\quad\text{defined by}\quad H^j_{st}(\mc{P})=H^j_{st}(\mc{P}\Omega) = H^j\left(\PP,\mc{P}\Omega\right),\]
 which is independent of $N=\dim\PP \geq d$. Moreover, we have that
 \begin{itemize}
  \item $H^j_{st}\equiv 0$ for $j>d$ (and for $j<0$).
  \item $H^d_{st}$ is right exact, and $H^{d-j}_{st}$ are the left derived functors of $H^d_{st}$.
 \end{itemize}
\end{theorem}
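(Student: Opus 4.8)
The plan is to reduce everything to Corollary~\ref{cor:kunneth-divided} by evaluating a projective resolution of $\mc{P}$ on $\Omega$. The one point that genuinely uses the structure of $\PP=\PP^N_\kk$ over a general ring is an exactness statement: if $0\to\mc{P}_1\to\mc{P}_2\to\mc{P}_3\to 0$ is a short exact sequence in $\RG$, then the resulting sequence of sheaves $0\to\mc{P}_1(\Omega)\to\mc{P}_2(\Omega)\to\mc{P}_3(\Omega)\to 0$ on $\PP$ is again exact. I would prove this locally: on an affine open $V\subseteq\PP$ over which $\Omega$ trivializes, $\Omega|_V\cong\mc{O}_V^{\,N}$, so on sections one has $\Gamma(V,\mc{P}_i(\Omega))=\mc{P}_i(\kk^N)\otimes_\kk\Gamma(V,\mc{O})$ by compatibility of strict polynomial functors with base change along $\kk\to\Gamma(V,\mc{O})$. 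Since $N\geq d$, the sequence $0\to\mc{P}_1(\kk^N)\to\mc{P}_2(\kk^N)\to\mc{P}_3(\kk^N)\to 0$ is exact, and because $\PP^N_\kk$ is flat over $\Spec\kk$ the ring $\Gamma(V,\mc{O})$ is $\kk$-flat, so tensoring preserves exactness. (Over a field, as in \cite{RV}, this flatness is automatic.) Granting this, the groups $H^j(\PP,\mc{P}\Omega)$ inherit the long exact cohomology sequence of $0\to\mc{P}_1(\Omega)\to\mc{P}_2(\Omega)\to\mc{P}_3(\Omega)\to 0$; in particular $\{H^{d-j}_{st}\}_{j\geq 0}$ is a homological $\delta$-functor on $\RG$, and $H^j_{st}\equiv 0$ for $j<0$ trivially.

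Next I would choose a projective resolution $P_\bullet\to\mc{P}$ in $\RG$ with each $P_i$ a direct sum of functors $D^{\ul{d}}$ (the $D^{\ul{d}}$ being projective generators). By the exactness above, $P_\bullet(\Omega)$ is a left resolution of $\mc{P}(\Omega)$ by sheaves, and Corollary~\ref{cor:kunneth-divided} tells us that each $D^{\ul{d}}(\Omega)$ has cohomology concentrated in degree $d$, equal to $\kk$ when $\ul{d}=(1^d)$ and $0$ otherwise, with this answer independent of $N\geq d$. Hence the hypercohomology spectral sequence $E^{s,t}_1=H^t(\PP,P_{-s}(\Omega))\Rightarrow H^{s+t}(\PP,\mc{P}(\Omega))$ is concentrated in the single row $t=d$, so it degenerates and yields
\[ H^j\!\left(\PP,\mc{P}\Omega\right) \;=\; H_{d-j}\!\left(H^d_{st}(P_\bullet)\right)\qquad\text{for all }j, \]
where $H^d_{st}(P_\bullet)$ is the complex of $\kk$-modules obtained by applying $H^d_{st}$ termwise to $P_\bullet$. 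There is no convergence issue even for an infinite resolution, since $\PP$ has finite cohomological dimension and cohomology commutes with direct sums there.

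Everything remaining is then formal. The complex $H^d_{st}(P_\bullet)$ depends only on $P_\bullet$ and on the $N$-independent values furnished by Corollary~\ref{cor:kunneth-divided}, so the displayed formula shows $H^j(\PP,\mc{P}\Omega)$ is independent of $N\geq d$; and since $H_{d-j}$ of a complex vanishes whenever $d-j<0$, the same formula gives $H^j_{st}\equiv 0$ for $j>d$. Feeding the vanishing $H^{d+1}_{st}(\mc{P}_1)=0$ into the long exact cohomology sequence of $0\to\mc{P}_1(\Omega)\to\mc{P}_2(\Omega)\to\mc{P}_3(\Omega)\to 0$ shows $H^d_{st}$ is right exact, hence $\LL_0 H^d_{st}=H^d_{st}$. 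Finally, $H^{d-j}_{st}$ kills each projective generator $D^{\ul{d}}$ for $j>0$ (Corollary~\ref{cor:kunneth-divided} once more), so the right exact functor $H^d_{st}$ together with the $\delta$-functor $\{H^{d-j}_{st}\}_{j\geq 0}$ is a universal $\delta$-functor, i.e.\ $H^{d-j}_{st}=\LL_j H^d_{st}$; alternatively, this identification is read off directly from the displayed formula, which is precisely $\LL_j H^d_{st}(\mc{P})$ computed on the projective resolution $P_\bullet$.

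The main obstacle is establishing the exactness of $\mc{P}\mapsto\mc{P}(\Omega)$ on $\RG$ over an arbitrary ring. One must keep in mind that it is a short exact sequence in the \emph{functor} category that is being evaluated — the analogous statement for a short exact sequence of locally free sheaves is false, since e.g.\ $\Sym^2$ of a sheaf extension is only filtered — and over a non-field base it is the flatness of $\PP^N_\kk$ over $\Spec\kk$ that makes the local base-change argument go through. Everything downstream of that is routine hypercohomology bookkeeping together with the Grothendieck $\delta$-functor formalism.
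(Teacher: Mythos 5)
Your argument follows the paper's proof essentially verbatim: resolve $\mc{P}$ by direct sums of the projective generators $D^{\ul{d}}$, use Corollary~\ref{cor:kunneth-divided} to see each term of the evaluated resolution has cohomology only in degree $d$ with value independent of $N\geq d$, deduce the formula $H^j_{st}(\mc{P})=H_{d-j}\left(H^d\left(\PP,\mc{F}_\bullet(\Omega)\right)\right)$, and obtain the vanishing, right exactness, and the identification of $H^{d-j}_{st}$ with the left derived functors from this formula together with the long exact sequence in sheaf cohomology. The only difference is that you make explicit the exactness of $\mc{P}\mapsto\mc{P}(\Omega)$ via local trivialization of $\Omega$ and $\kk$-flatness of the affine charts of $\PP^N_{\kk}$, a step the paper treats as immediate; that elaboration is correct.
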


\begin{proof} It is clear that $H^j_{st}$ defines a functor, and to prove independence of $N$ we recall from the proof of \cite{Krause-duality}*{Theorem~2.10} that the functors $D^{\ul{d}}$ form a family of projective generators of $\RG$. It follows that each $\mc{P}$ has a projective resolution $\mc{F}_{\bullet}:\cdots\lra\mc{F}_1\lra\mc{F}_0$, where $\mc{F}_i$ is a direct sum of $D^{\ul{d}}$. It follows that $\mc{F}_\bullet(\Omega)$ is a resolution of $\mc{P}(\Omega)$, and by Corollary~\ref{cor:kunneth-divided} each $\mc{F}_i(\Omega)$ can only have cohomology in degree $d$, which is independent of $N$. The complex $H^d\left(\PP,\mc{F}_\bullet(\Omega)\right)$ is then independent of $N$ and we have
\begin{equation}\label{eq:HjstP=H-HdFOmega}
 H^j_{st}(\mc{P}) = H_{d-j}\left(H^d\left(\PP,\mc{F}_\bullet(\Omega)\right)\right).
\end{equation}
Since $\mc{F}_{\bullet}$ is concentrated in non-negative degrees, this shows that $H^j_{st}\equiv 0$ for $j>d$. The remaining conclusions follow using the long exact sequence in sheaf cohomology.
\end{proof}

\begin{example}\label{ex:Hst-Sym2}
 Consider the polynomial functor $\Sym^2$ and the short exact sequence
 \[ 0 \lra \Omega^2 \lra \Omega\oo\Omega \lra \Sym^2\Omega \lra 0.\]
 The long exact sequence in cohomology, together with \eqref{eq:coh-Omegai}, Corollary~\ref{cor:kunneth-divided}, and \cite{RV}*{Corollary~4.9}, yields the following exact sequence
 \[ 0 \lra H^1_{st}(\Sym^2) \lra \kk \overset{\times 2}{\lra} \kk \lra H^2_{st}(\Sym^2) \lra 0\]
 This shows that $H^1_{st}(\Sym^2) = (0:_{\kk} 2)$ is the $2$-torsion in $\kk$, while $H^2_{st}(\Sym^2) = \kk/2\kk$. When $\kk$ is a field of characteristic $2$, this yields $H^1_{st}(\Sym^2)=H^2_{st}(\Sym^2)=\kk$ (see also \cite{RV}*{Example~6.11}), while for $\kk=\bb{Z}$ we get $H^2_{st}(\Sym^2)=\bb{Z}/2\bb{Z}$ and the other stable cohomology groups vanish.
\end{example}

We can think of Corollary~\ref{cor:kunneth-divided} as a (trivial version of the) K\"unneth formula for tensor products of divided powers. It follows that if $d=d'+d''$, $\mc{F}_\bullet$ (resp. $\mc{G}_\bullet$) are complexes whose terms are direct sums of $D^{\ul{d}'}$, $|\ul{d}'|=d'$ (resp. $D^{\ul{d}''}$, $|\ul{d}''|=d''$), then we have an isomorphism of complexes
\begin{equation}\label{eq:tensor-FG-stcoh}
H^{d'}\left(\PP,\mc{F}_\bullet(\Omega)\right) \oo H^{d''}\left(\PP,\mc{G}_\bullet(\Omega)\right) = H^d\left(\PP,\mc{F}_\bullet(\Omega)\oo\mc{G}_\bullet(\Omega)\right).
\end{equation}
The K\"unneth formula \cite{RV}*{Theorem~4.3} then follows in general when $\kk$ a field. For a more general ring $\kk$ we have the following consequences that will be used later.

\begin{proposition}\label{prop:kunneth} 
 If $\mc{P}'$, $\mc{P}''$ are polynomial functors of degrees $d',d''$, with $d'+d''=d$, then
 \begin{equation}\label{eq:Hdst-PotimesP}
  H^d_{st}(\mc{P}'\oo\mc{P}'') = H^{d'}_{st}(\mc{P}') \oo H^{d''}_{st}(\mc{P}'').
 \end{equation}
 If moreover $H^{j'}_{st}(\mc{P}')=H^{j''}_{st}(\mc{P}'')=0$ for $j'<d'$, $j''<d''$ then
 \begin{equation}\label{eq:Hjst-TorPP}
 H^{d-j}_{st}(\mc{P}'\oo\mc{P}'') = \Tor_j^{\kk}\left(H^{d'}_{st}(\mc{P}'), H^{d''}_{st}(\mc{P}'') \right)\quad\text{for all }j.
 \end{equation}
\end{proposition}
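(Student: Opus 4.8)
The plan is to leverage the projective resolutions by tensor products of divided powers together with the K\"unneth isomorphism of complexes \eqref{eq:tensor-FG-stcoh}, reducing everything to a comparison of homology with derived tensor products over $\kk$. First I would choose projective resolutions $\mc{F}_{\bullet}\onto\mc{P}'$ and $\mc{G}_{\bullet}\onto\mc{P}''$ in $\RG$, where each $\mc{F}_i$ (resp. $\mc{G}_j$) is a finite direct sum of functors $D^{\ul{d}'}$ with $|\ul{d}'|=d'$ (resp. $D^{\ul{d}''}$ with $|\ul{d}''|=d''$), as guaranteed by \cite{Krause-duality}*{Theorem~2.10}. Since $-\oo-$ on functors corresponds to Cauchy-type products, the total complex $\mathrm{Tot}(\mc{F}_{\bullet}\oo\mc{G}_{\bullet})$ is a projective resolution of $\mc{P}'\oo\mc{P}''$ (each term is a direct sum of $D^{\ul{e}}$ with $|\ul{e}|=d$, and exactness follows because the $\mc{F}_i$, $\mc{G}_j$ are $\kk$-flat in each weight space, so the classical K\"unneth argument applies termwise). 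Evaluating on $\Omega$ and applying $H^d(\PP,-)$, Corollary~\ref{cor:kunneth-divided} tells us the cohomology of each term is concentrated in degree $d$, so by \eqref{eq:HjstP=H-HdFOmega} the stable cohomology is computed by the single-degree complex $H^d(\PP,\mathrm{Tot}(\mc{F}_{\bullet}\oo\mc{G}_{\bullet})(\Omega))$. Now \eqref{eq:tensor-FG-stcoh}, applied termwise and compatibly with differentials, identifies this with $\mathrm{Tot}\big(H^{d'}(\PP,\mc{F}_{\bullet}(\Omega))\oo_{\kk}H^{d''}(\PP,\mc{G}_{\bullet}(\Omega))\big)$.

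For \eqref{eq:Hdst-PotimesP}, I would take $H^d$ of the last identification: $H^d_{st}(\mc{P}'\oo\mc{P}'')$ is the zeroth homology (i.e. the cokernel of the top differential) of the total tensor complex, and since $-\oo_{\kk}-$ is right exact, the cokernel of the total differential on $C'_{\bullet}\oo_{\kk}C''_{\bullet}$ is $H_0(C'_{\bullet})\oo_{\kk}H_0(C''_{\bullet})$, where $C'_{\bullet}=H^{d'}(\PP,\mc{F}_{\bullet}(\Omega))$ and $C''_{\bullet}=H^{d''}(\PP,\mc{G}_{\bullet}(\Omega))$. By \eqref{eq:HjstP=H-HdFOmega} again, $H_0(C'_{\bullet})=H^{d'}_{st}(\mc{P}')$ and $H_0(C''_{\bullet})=H^{d''}_{st}(\mc{P}'')$, giving the claimed formula. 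For \eqref{eq:Hjst-TorPP}, the extra hypothesis $H^{j'}_{st}(\mc{P}')=0$ for $j'<d'$ and $H^{j''}_{st}(\mc{P}'')=0$ for $j''<d''$ says precisely, via \eqref{eq:HjstP=H-HdFOmega}, that the complexes $C'_{\bullet}$ and $C''_{\bullet}$ are \emph{acyclic in positive homological degree} — that is, each is a (finite, termwise free, since $D^{\ul{d}}(\Omega)$ has free top cohomology over $\kk$ by Corollary~\ref{cor:kunneth-divided}) $\kk$-free resolution of its zeroth homology $H^{d'}_{st}(\mc{P}')$, respectively $H^{d''}_{st}(\mc{P}'')$. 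Therefore $\mathrm{Tot}(C'_{\bullet}\oo_{\kk}C''_{\bullet})$ computes $\mc{P}'$ ... more precisely it computes $\Tor^{\kk}_{\bullet}\big(H^{d'}_{st}(\mc{P}'),H^{d''}_{st}(\mc{P}'')\big)$, and matching homological degrees (homological degree $j$ of the total complex corresponds to cohomological degree $d-j$ by \eqref{eq:HjstP=H-HdFOmega}) yields \eqref{eq:Hjst-TorPP}.

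The main point requiring care — the potential obstacle — is the K\"unneth argument at the level of \emph{functor} resolutions over a general commutative ring: I need that $\mathrm{Tot}(\mc{F}_{\bullet}\oo\mc{G}_{\bullet})$ remains a resolution of $\mc{P}'\oo\mc{P}''$ (not merely that the tensor product of resolutions resolves \emph{something}). This is where flatness enters: each weight space of $D^{\ul{d}}$ is a free $\kk$-module, so the $\mc{F}_i$, $\mc{G}_j$ are weight-space-wise flat, and one can check exactness of the total complex weight space by weight space using the classical K\"unneth spectral sequence (which degenerates here because one of the factors is a flat resolution). One must also confirm that the isomorphism \eqref{eq:tensor-FG-stcoh} is natural enough to be compatible with the differentials of $\mc{F}_{\bullet}$ and $\mc{G}_{\bullet}$ — this follows from naturality of the cup-product/K\"unneth map in sheaf cohomology, combined with the fact that all the relevant $H^{>d}$ and $H^{<d}$ groups vanish so there are no correction terms. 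Once these two compatibilities are in place, both formulas drop out formally from right-exactness of $\oo_{\kk}$ and the definition of $\Tor$.
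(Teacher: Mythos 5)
Your overall strategy (divided-power resolutions, Corollary~\ref{cor:kunneth-divided}, the identities \eqref{eq:HjstP=H-HdFOmega} and \eqref{eq:tensor-FG-stcoh}) is the same as the paper's, but the step you yourself single out as the main point of care is justified incorrectly, and this is a genuine gap. Flatness of the \emph{terms} of $\mc{F}_{\bullet}$ and $\mc{G}_{\bullet}$ does not make $\mathrm{Tot}(\mc{F}_{\bullet}\oo\mc{G}_{\bullet})$ a resolution of $\mc{P}'\oo\mc{P}''$: for a finitely generated projective $V$ the complex $\mc{F}_{\bullet}(V)\oo_{\kk}\mc{G}_{\bullet}(V)$ is a tensor product of projective resolutions, so its homology is $\Tor^{\kk}_{\bullet}\left(\mc{P}'(V),\mc{P}''(V)\right)$; the K\"unneth spectral sequence ``degenerates'' to Tor of the homology, not to exactness, and exactness would require the \emph{values} of one of the functors to be flat, not the terms of the resolutions. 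Concretely, over $\kk=\bb{Z}$ take $\mc{P}'=\mc{P}''=\coker\left(D^2\xra{\ 2\ }D^2\right)$ with the evident length-one resolutions: the first homology of the total complex is $D^{(2,2)}/2\,D^{(2,2)}\neq 0$, even though the vanishing hypotheses of \eqref{eq:Hjst-TorPP} hold for these functors (all of their stable cohomology vanishes, and both sides of \eqref{eq:Hjst-TorPP} are in fact zero here, so this refutes your intermediate claim, not the Proposition). Consequently \eqref{eq:HjstP=H-HdFOmega} cannot be applied to $\mathrm{Tot}(\mc{F}_{\bullet}\oo\mc{G}_{\bullet})$ as if it were a projective resolution of $\mc{P}'\oo\mc{P}''$.

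The two statements are affected differently. For \eqref{eq:Hdst-PotimesP} the gap is easily repaired, and this is the paper's route: only the right-exact tail $(\mc{F}_1\oo\mc{G}_0)\oplus(\mc{F}_0\oo\mc{G}_1)\lra\mc{F}_0\oo\mc{G}_0\lra\mc{P}'\oo\mc{P}''\lra 0$ is needed; apply the right-exact functor $H^d_{st}$ from Theorem~\ref{thm:Hjst-to-Modk}, identify the first two terms and the map between them via \eqref{eq:tensor-FG-stcoh}, and finish with right-exactness of $\oo_{\kk}$ -- no resolution of $\mc{P}'\oo\mc{P}''$ is required. For \eqref{eq:Hjst-TorPP}, your identification of $\Tor^{\kk}_j\left(H^{d'}_{st}(\mc{P}'),H^{d''}_{st}(\mc{P}'')\right)$ with $H_j$ of $H^d\left(\PP,\mathrm{Tot}(\mc{F}_{\bullet}\oo\mc{G}_{\bullet})(\Omega)\right)$ is fine, since under the hypotheses $C'_{\bullet}$ and $C''_{\bullet}$ are free resolutions; but the identification of that homology with $H^{d-j}_{st}(\mc{P}'\oo\mc{P}'')$ is exactly the point that rests on the false exactness claim. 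As written, your proof of the second statement does not go through: you need a separate argument bridging $H_j\left(H^d_{st}(\mathrm{Tot}(\mc{F}_{\bullet}\oo\mc{G}_{\bullet}))\right)$ and $\LL_j H^d_{st}(\mc{P}'\oo\mc{P}'')$ -- for instance a hyperhomology spectral sequence for the right-exact functor $H^d_{st}$ applied to the complex $\mathrm{Tot}(\mc{F}_{\bullet}\oo\mc{G}_{\bullet})$, whose higher homology objects are the functor-level Tor's, together with a reason why those do not contribute -- or an additional flatness assumption on the values of one factor (which does hold in all the applications made in the paper, where the factors have projective values).
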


\begin{proof}
 Let $\mc{F}_\bullet$ (resp. $\mc{G}_\bullet$) be resolutions of $\mc{P}'$ (resp. $\mc{P}''$) as in the proof of Theorem~\ref{thm:Hjst-to-Modk}. Conclusion \eqref{eq:Hdst-PotimesP} follows from \eqref{eq:HjstP=H-HdFOmega}, \eqref{eq:tensor-FG-stcoh} and the right exactness of tensor products. For the second part, the vanishing assumptions on stable cohomology imply that $H^{d'}\left(\PP,\mc{F}_\bullet(\Omega)\right)$ is a free resolution of $H^{d'}_{st}(\mc{P}')$ over $\kk$, and that $H^{d''}\left(\PP,\mc{G}_\bullet(\Omega)\right)$ is a free resolution of $H^{d''}_{st}(\mc{P}'')$, so \eqref{eq:Hjst-TorPP} follows again from \eqref{eq:tensor-FG-stcoh}.
\end{proof}

\begin{example}\label{ex:stcoh-Sym2ooSym2}
 Building on Example~\ref{ex:Hst-Sym2}, K\"unneth's formula implies that if $\kk$ is a field of characteristic $2$ then
 \[ H^2_{st}(\Sym^2\oo\Sym^2) = H^4_{st}(\Sym^2\oo\Sym^2) = \kk,\quad H^3_{st}(\Sym^2\oo\Sym^2) = \kk^{\oplus 2},\]
 while for $\kk=\bb{Z}$ it follows from Proposition~\ref{prop:kunneth} that
 \[ H^3_{st}(\Sym^2\oo\Sym^2) = H^4_{st}(\Sym^2\oo\Sym^2) = \bb{Z}/2\bb{Z},\]
 and the other stable cohomology groups vanish.
\end{example}

\subsection{The stable cohomology functor on $\RG$ and Koszul--Ringel duality}
\label{subsec:stcoh-RG}

We can enhance stable cohomology to a collection of functors on the category of polynomial representations. We define for each $j$
\begin{equation}\label{eq:def-HHst}
 \HH^j_{st}:\RG\lra \RG,\quad \HH^j_{st}(\mc{P})(V)=H^j_{st}\left(\mc{P}(\Omega\oo V)\right)\quad\text{for }\mc{P}\in\RG,\ V\in\mc{V}_{\kk}.
\end{equation}
It follows from Theorem~\ref{thm:Hjst-to-Modk} that $\HH^j_{st}\equiv 0$ for $j>d$ (and for $j<0$), that $\HH^d_{st}$ is right exact, and that $\HH^{d-j}_{st}$ are the left derived functors of $\HH^d_{st}$. We start by analyzing these functors for $\mc{P}$ a tensor product of divided powers.

\begin{proposition}\label{prop:stcoh-transpose-Dcomposition}
 If $|\ul{d}|=d$ then we have
 \[ \HH^j_{st}\left(D^{\ul{d}}\right) = 0\quad\text{ for all }j\neq d,\text{ and }\quad \HH^d_{st}\left(D^{\ul{d}}\right) = \bw^{\ul{d}}.\]
\end{proposition}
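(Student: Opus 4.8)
The plan is to compute $H^j(\PP,D^{\ul{d}}(\Omega\oo V))$ directly, by building a short resolution of $D^{\ul{d}}(\Omega\oo V)$ by vector bundles of the form $\mc{O}_{\PP}(-i)\oo(\text{free }\kk\text{-module})$ and invoking the vanishing \eqref{eq:HO-i=0}. Since $D^{\ul{d}}=D^{d_1}\oo\cdots\oo D^{d_n}$ we have $D^{\ul{d}}(\Omega\oo V)=\bigotimes_{i=1}^n D^{d_i}(\Omega\oo V)$, and we may assume $d_i\geq 1$ for all $i$. Tensoring the Euler sequence \eqref{eq:Euler-sequence} with $V$ gives a short exact sequence of vector bundles
\[0\lra\Omega\oo V\lra\mc{O}_{\PP}(-1)\oo U\lra\mc{O}_{\PP}\oo V\lra 0,\qquad U:=\kk^{N+1}\oo_{\kk}V.\]
For any short exact sequence $0\to A\to B\to C\to 0$ of vector bundles there is the associated Koszul complex resolving $D^mA$,
\[0\lra D^mA\lra D^mB\lra D^{m-1}B\oo\bw^1C\lra\cdots\lra D^1B\oo\bw^{m-1}C\lra\bw^mC\lra 0\]
(a standard Koszul complex; see e.g.\ \cite{ABW}, and \cite{AB2}*{Section~6} for the role of this in passing from divided to exterior powers). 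Applying it with $m=d_i$, and using $D^k\bigl(\mc{O}_{\PP}(-1)\oo U\bigr)=\mc{O}_{\PP}(-k)\oo D^kU$ together with $\bw^k(\mc{O}_{\PP}\oo V)=\mc{O}_{\PP}\oo\bw^kV$, I get a resolution $\mc{G}_i^{\bullet}$ of $D^{d_i}(\Omega\oo V)$ with
\[\mc{G}_i^j=\mc{O}_{\PP}(j-d_i)\oo D^{d_i-j}U\oo\bw^jV\qquad(0\leq j\leq d_i);\]
for $d_i=1$ this is precisely the Euler sequence tensored with $V$.

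The next step is to tensor these resolutions together. As each $\mc{G}_i^j$ is a vector bundle, $\operatorname{Tot}\bigl(\bigotimes_{i=1}^n\mc{G}_i^{\bullet}\bigr)$ is a resolution of $D^{\ul{d}}(\Omega\oo V)$, concentrated in cohomological degrees $0,\dots,d$, whose term in degree $\ell$ is a direct sum of copies of $\mc{O}_{\PP}(\ell-d)\oo\bw^{\ul{j}}(V)$ over tuples $\ul{j}$ with $j_1+\cdots+j_n=\ell$. If $0\leq\ell\leq d-1$ then $1\leq d-\ell\leq d\leq N$, so by \eqref{eq:HO-i=0} these terms have vanishing cohomology in every degree; if $\ell=d$ then necessarily $j_i=d_i$ for all $i$, the term collapses to $\mc{O}_{\PP}\oo\bw^{d_1}V\oo\cdots\oo\bw^{d_n}V=\mc{O}_{\PP}\oo\bw^{\ul{d}}(V)$, whose only cohomology is $H^0=\bw^{\ul{d}}(V)$. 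In the hypercohomology spectral sequence $E_1^{p,q}=H^q\bigl(\PP,\operatorname{Tot}^p\bigr)\Rightarrow H^{p+q}\bigl(\PP,D^{\ul{d}}(\Omega\oo V)\bigr)$ the only nonzero entry is therefore $E_1^{d,0}=\bw^{\ul{d}}(V)$, so the spectral sequence degenerates and $H^j\bigl(\PP,D^{\ul{d}}(\Omega\oo V)\bigr)$ equals $\bw^{\ul{d}}(V)$ for $j=d$ and vanishes otherwise. All of this is functorial in $V$ and compatible with the strict polynomial structure (the assignment $V\mapsto\Omega\oo V$ being a polynomial functor), which yields $\HH^d_{st}(D^{\ul{d}})\cong\bw^{\ul{d}}$ and $\HH^j_{st}(D^{\ul{d}})=0$ for $j\neq d$. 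For $\ul{d}=(1^d)$ this recovers the computation $H^d(\PP,\Omega^{\oo d})=\kk$ of Corollary~\ref{cor:kunneth-divided}, now refined to $\bw^{(1^d)}(V)=V^{\oo d}$.

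The point requiring the most care is the Koszul resolution over an arbitrary commutative ring $\kk$: one must use divided (rather than symmetric) powers on the $B$-side, since otherwise binomial coefficients enter the differentials and exactness fails in positive characteristic; exactness for a not-necessarily-split short exact sequence of vector bundles is then obtained by reducing fibrewise to the split case over a field, which in turn amounts to the acyclicity of the divided-power Koszul complex $\bw^{\bullet}\oo\Gamma^{\bullet}$. The remaining ingredients — that tensoring a bounded resolution by vector bundles remains a resolution, and that a spectral sequence with a single nonzero $E_1$-entry degenerates — are routine.
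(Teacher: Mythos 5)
Your argument is correct, and it takes a genuinely different route from the paper's proof. The paper first treats the case $\ul{d}=(d)$ by means of the Cauchy filtration, which gives a short exact sequence $0\to\mc{K}\to D^d(\Omega\oo V)\to\Omega^d\oo\bw^dV\to 0$ whose kernel is filtered by factors $\bb{W}_{\ll}\Omega\oo\bb{W}_{\ll}V$ with $\ll_1\geq 2$; these are killed by Lemma~\ref{lem:Weyl-Omega} (itself proved via an ABW coresolution by line-bundle twists), and the case of a general composition $\ul{d}$ is then assembled using the K\"unneth statement of Proposition~\ref{prop:kunneth} together with \eqref{eq:coh-Omegai}. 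You instead coresolve each $D^{d_i}(\Omega\oo V)$ directly from the Euler sequence \eqref{eq:Euler-sequence} by the divided-power/exterior Koszul complex $0\to D^mA\to D^mB\to D^{m-1}B\oo\bw^1C\to\cdots\to\bw^mC\to 0$, whose terms are negative twists $\mc{O}_{\PP}(j-d_i)$ tensored with $\kk$-projective modules, tensor these coresolutions together, and read off the answer from \eqref{eq:HO-i=0} and the hypercohomology spectral sequence. This is more self-contained: it bypasses the Cauchy and Pieri filtrations, Lemma~\ref{lem:Weyl-Omega}, Corollary~\ref{cor:kunneth-divided} and Proposition~\ref{prop:kunneth} (in effect reproving the special case of the latter that is needed here), at the price of requiring exactness of the Koszul complex attached to a non-split short exact sequence of bundles over an arbitrary base --- a point you rightly isolate; since the sequence is locally split ($C$ being locally free), this reduces to exactness of the $D^{\bullet}\oo\bw^{\bullet}$ Koszul strands over any commutative ring, which is indeed available in \cite{ABW}, so the step is sound. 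Two cosmetic touch-ups: the auxiliary modules $D^{d_i-j}U\oo\bw^jV$ are projective rather than free (harmless, as \eqref{eq:HO-i=0} passes to direct summands), and the ``fibrewise'' reduction is stated more cleanly as local splitting, although checking a bounded complex of vector bundles on fibres is also legitimate. Your treatment of naturality in $V$ is at the same level of detail as the paper's and is fine, since every map you use is a natural transformation of polynomial functors applied to \eqref{eq:Euler-sequence} tensored with $V$.
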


\begin{proof} We first discuss the case when $\ul{d}=(d)$ is a singleton. Using the Cauchy filtration for divided powers \cite{Krause-book}*{Theorem~8.4.6}, we have a short exact sequence
\begin{equation}\label{eq:ses-Dd-to-wedge-twice}
 0 \lra \mc{K} \lra D^d(\Omega\oo V) \overset{\pi}{\lra} \Omega^d \oo \bw^d V \lra 0,
\end{equation}
where $\mc{K}$ has a filtration with composition factors of the form
\[ \bb{W}_{\ll}\Omega \oo \bb{W}_{\ll}V,\text{ where }\ll\vdash d\text{ with }\ll_1\geq 2.\]
It follows from Lemma~\ref{lem:Weyl-Omega} that $\mc{K}$ has vanishing cohomology, hence we get natural isomorphisms
\[H^j_{st}\left(D^{d}(\Omega\oo V)\right) = H^j_{st}\left(\Omega^d \oo \bw^d V\right) = H^j_{st}\left(\Omega^d\right) \oo \bw^d V\quad\text{ for all }j.\]
The desired description for $\HH^j_{st}(D^d)$ follows from \eqref{eq:coh-Omegai}.

For the general $\ul{d}$ we know that $H^j_{st}(D^{d_i}(\Omega\oo V))=0$ for $j\neq d_i$, and $H^{d_i}_{st}(D^{d_i}(\Omega\oo V))=\bw^{d_i}V$ is a projective module, hence flat. The desired conclusion follows from Proposition~\ref{prop:kunneth}.
\end{proof}

We next consider the action of stable cohomology on maps between tensor products of divided powers. Following \cite{Krause-book}*{Section~8.5}, for a composition $\ul{d}$ of $d$ we consider the multiplication and comultiplication transformations
\[ \nabla: T^d \lra D^{\ul{d}},\quad\hat{\nabla}:T^d \lra \bw^{\ul{d}},\quad\Delta:D^{\ul{d}} \lra T^d,\quad\hat{\Delta}:\bw^{\ul{d}} \lra T^d. \]
We will abuse notation and use the same symbols regardless of the composition $\ul{d}$.

\begin{lemma}\label{lem:basic-co-mult-stable}
 The divided power multiplication $\nabla:T^d \lra D^d$ induces the exterior multiplication on stable cohomology
   \[ \HH^d_{st}\left(T^d\right) = T^d \overset{\HH^d_{st}(\nabla)=\hat{\nabla}}{\lra} \HH^d_{st}\left(D^{d}\right) = \bw^d.\]
 Similarly, for the comultiplication $\Delta:D^d\lra T^d$ we have $\HH^d_{st}(\Delta) = \hat{\Delta}:\bw^d\lra T^d$.
\end{lemma}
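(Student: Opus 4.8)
The plan is to leverage Proposition~\ref{prop:stcoh-transpose-Dcomposition}, which already identifies $\HH^d_{st}(D^d) = \bw^d$ and, via naturality in $V$, the stable cohomology of $T^d = D^{(1,\dots,1)}$ with $\bw^{(1,\dots,1)} = T^d$ itself (a tensor product of $d$ copies of $\bw^1$). So the content of the lemma is purely about which \emph{map} $\HH^d_{st}(\nabla)$ is, given that source and target are already pinned down. First I would make precise the identifications: applying $\HH^d_{st}$ to $D^{(1^d)} \to D^d$ (divided-power multiplication) and to $D^d \to D^{(1^d)}$ (comultiplication) lands us, by Proposition~\ref{prop:stcoh-transpose-Dcomposition} and the K\"unneth isomorphism \eqref{eq:tensor-FG-stcoh}, in maps $T^d \to \bw^d$ and $\bw^d \to T^d$ respectively. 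The claim is that these are the canonical $\hat\nabla$ and $\hat\Delta$.

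**Key steps.** The cleanest route is to trace through the proof of Proposition~\ref{prop:stcoh-transpose-Dcomposition} more carefully rather than invoke it as a black box. In that proof, the isomorphism $\HH^d_{st}(D^d)(V) \cong \bw^d V$ comes from the surjection $\pi: D^d(\Omega\oo V) \onto \Omega^d \oo \bw^d V$ from the Cauchy filtration, combined with $H^d_{st}(\Omega^d) = \kk$ from \eqref{eq:coh-Omegai}. Concretely: the $d$-fold exterior multiplication map factors as $\Omega^{\oo d}\oo V^{\oo d} \to D^d(\Omega \oo V) \overset{\pi}{\to} \Omega^d \oo \bw^d V$, where the first arrow is (a twist of) the comultiplication $T^d \to D^d$ and the composite, after passing to $H^d_{st}$ and using Corollary~\ref{cor:kunneth-divided} (which identifies $H^d(\PP, \Omega^{\oo d})$ with the sign representation, i.e.\ the image of $\Omega^{\oo d}\to\Omega^d$), realizes precisely the sign-idempotent projection $T^d \to \bw^d$. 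Chasing this diagram at the level of $H^d_{st}$ shows that the composite $T^d \overset{\HH^d_{st}(\Delta\text{-twist})}{\to} \HH^d_{st}(D^d) = \bw^d$ equals $\hat\nabla$. For the statement of the lemma as written, one then notes that $\HH^d_{st}(\nabla) : T^d = \HH^d_{st}(T^d) \to \HH^d_{st}(D^d) = \bw^d$ is obtained by applying $\HH^d_{st}$ to the multiplication $T^d \to D^d$; since $\HH^d_{st}(T^d) = T^d$ comes from iterating the singleton case via \eqref{eq:tensor-FG-stcoh}, and the functor $\HH^d_{st}$ is $\kk$-linear and monoidal on these generators in the appropriate sense, the map it induces is the canonical one $\hat\nabla$. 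Finally, the statement for $\Delta$ follows either by the symmetric argument using the Cauchy \emph{co}filtration, or by Kuhn duality: $\hat\Delta$ is dual to $\hat\nabla$, $\Delta$ is dual to $\nabla$, and $\HH^d_{st}$ intertwines these dualities (this compatibility should be recorded or is immediate from the definitions), so $\HH^d_{st}(\Delta) = \hat\Delta$.

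**Main obstacle.** The technical heart is verifying the commutativity of the square relating the multiplication map $T^d \to D^d$ to the surjection $\pi: D^d(\Omega\oo V)\onto \Omega^d\oo\bw^d V$ from the Cauchy filtration, i.e.\ confirming that the projection onto the top Cauchy factor is compatible with the $\mf{S}_d$-action and picks out exactly the sign-isotypic piece detected by $H^d_{st}$. This amounts to a bookkeeping check that the identification $H^d_{st}(\Omega^{\oo d}) = \kk$ from Corollary~\ref{cor:kunneth-divided} (realized via the exterior multiplication $\Omega^{\oo d}\to\Omega^d$) matches the identification coming from the Cauchy surjection $D^d(\Omega\oo V) \to \Omega^d\oo\bw^d V$ precomposed with $T^d(\Omega\oo V) \to D^d(\Omega\oo V)$ — both express $H^d_{st}$ of a tensor power of $\Omega$ as a one-dimensional sign representation, and the claim is that the isomorphism between these two descriptions is the identity. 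Once that normalization is fixed (which requires no computation beyond unwinding the Cauchy filtration and the sign conventions already established), the lemma follows formally. I would present this as a short diagram chase, citing Corollary~\ref{cor:kunneth-divided}, Proposition~\ref{prop:stcoh-transpose-Dcomposition}, and \cite{Krause-book}*{Theorem~8.4.6} for the Cauchy filtration.
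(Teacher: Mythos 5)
Your treatment of the multiplication half is essentially the paper's own argument: evaluate on $V$, note that the composite $T^d(\Omega\oo V)\overset{\nabla}{\lra} D^d(\Omega\oo V)\overset{\pi}{\lra}\Omega^d\oo\bw^d V$ is $\hat{\nabla}\oo\hat{\nabla}$ (the first arrow here is the multiplication $\nabla$, not ``a twist of the comultiplication'' as you wrote, but that is only a slip of terminology), and conclude because both $\pi$ and $\hat{\nabla}\colon T^d\Omega\lra\Omega^d$ induce isomorphisms on cohomology by Lemma~\ref{lem:Weyl-Omega} and Corollary~\ref{cor:kunneth-divided}. That half is fine.

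The comultiplication half is a genuine gap, and neither of your proposed routes closes it. The Kuhn duality route fails twice over: first, the Kuhn dual of $\Delta\colon D^d\to T^d$ is the multiplication $T^d\to\Sym^d$ (since $(D^d)^{\#}=\Sym^d$), not the divided power multiplication $\nabla\colon T^d\to D^d$, so even granting a duality compatibility you would not be reducing to the half already proved; second, stable cohomology does \emph{not} intertwine Kuhn duality --- for example $(\Sym^2)^{\#}=D^2$ has $H^j_{st}(D^2)=0$ for all $j$ by Lemma~\ref{lem:Weyl-Omega}, while $H^j_{st}(\Sym^2)\neq 0$ by Example~\ref{ex:Hst-Sym2} --- and the only duality statement of this flavor in the paper, the field case of Theorem~\ref{thm:stcoh-transpose-duality}, is proved \emph{using} the present lemma and would in any case not cover the arbitrary commutative ring $\kk$ over which the lemma is needed. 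The ``symmetric Cauchy cofiltration'' alternative is not an argument as stated: the identification $\HH^d_{st}(D^d)\cong\bw^d$ is realized through the quotient $\pi$ of $D^d(\Omega\oo V)$, and $\Delta$ maps \emph{out of} $D^d(\Omega\oo V)$, so there is no triangle to dualize and no natural subobject $\Omega^d\oo\bw^d V\hookrightarrow D^d(\Omega\oo V)$ through which $\Delta$ factors. What the paper actually does is a direct computation: take $V$ free with basis $v_1,\dots,v_d$, decompose $D^d(\Omega\oo V)$ into weight pieces and discard all but the multilinear summand $\Omega^{\oo d}v_1\cdots v_d$ (every other piece has vanishing cohomology by Corollary~\ref{cor:kunneth-divided}), observe that on this summand the components of $\Delta$ into $T^d(\Omega\oo V)$ are the $\mf{S}_d$-permutations of the factors of $\Omega^{\oo d}$, and use the fact that $H^d(\PP,\Omega^{\oo d})$ is the sign representation of $\mf{S}_d$ to conclude that $\HH^d_{st}(\Delta)$ sends the class of $v_1\wedge\cdots\wedge v_d$ to $\sum_{\sigma\in\mf{S}_d}\sgn(\sigma)\,v_{\sigma(1)}\oo\cdots\oo v_{\sigma(d)}$, i.e.\ equals $\hat{\Delta}$. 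Some computation of this kind --- pinning down the sign with which each permutation acts on $H^d(\PP,\Omega^{\oo d})$ --- is the actual content of the second half, and your proposal does not supply it.
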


\begin{proof} The identifications $\HH^d_{st}\left(T^d\right) = T^d$ and $\HH^d_{st}\left(D^{d}\right) = \bw^d$ follow by applying Proposition~\ref{prop:stcoh-transpose-Dcomposition} for $\ul{d}=(1^d)$ and $\ul{d}=(d)$ respectively. For multiplication, we evaluate the polynomial functors on a projective module $V$ and note that we have a commutative diagram
\[ 
\xymatrix{
T^d(\Omega \oo V) \ar[r]^{\nabla} \ar[dr]_{\hat{\nabla} \oo \hat{\nabla}} & D^d(\Omega\oo V) \ar[d]^{\pi} \\
& \Omega^d \oo \bw^d V
}
\]
where $\pi$ is the map from \eqref{eq:ses-Dd-to-wedge-twice}, and to define $\hat{\nabla} \oo \hat{\nabla}$ we use the identification $T^d(\Omega \oo V) = T^d(\Omega) \oo T^d(V)$. Since both $\pi$ and $\hat{\nabla}:T^d\Omega \lra \Omega^d$ induce isomorphisms in cohomology, we conclude that $H^d_{st}(\nabla)=\hat{\nabla}$.

For comultiplication, we write $\Psi = \HH^d_{st}(\Delta)$, and write $\Psi_V$ for its evaluation on a vector space $V$. To compute $\Psi_V(v_1\cdots v_d)$, we may assume without loss of generality that $V$ is free and $v_1,\cdots,v_d$ is a basis of $V$. We have
\[ D^d(\Omega\oo V) = D^d(\Omega v_1 \oplus \cdots \oplus \Omega v_d) =  \Omega^{\oo d} v_1\cdots v_d \bigoplus \mc{C},\]
where $\mc{C}$ is a direct sum of $D^{\ul{d}}\Omega$, where $\ul{d}$ is a composition of $d$ with some $d_i\geq 2$, and hence has vanishing stable cohomology by Corollary~\ref{cor:kunneth-divided}. Similarly, we have
\[ T^d(\Omega\oo V) = T^d(\Omega v_1 \oplus \cdots \oplus \Omega v_d) = \bigoplus_{\sigma\in\mf{S}_d} \Omega^{\oo d} v_{\sigma(1)}\oo\cdots\oo v_{\sigma(d)} \bigoplus \mc{C}',\]
where $\mc{C}'$ is a direct sum of copies of $\Omega^{\oo d}$ indexed by tensors $v_{i_1}\oo\cdots\oo v_{i_d}$ where some index $i_j$ appears at least twice. It follows that $\Delta$ restricts to a map
\[ \Omega^{\oo d} v_1\cdots v_d \lra \bigoplus_{\sigma\in\mf{S}_d} \Omega^{\oo d} v_{\sigma(1)}\oo\cdots\oo v_{\sigma(d)}\]
where each component $\Omega^{\oo d}\lra\Omega^{\oo d}$ is given by permuting the factors using the corresponding~$\sigma\in\mf{S}_d$. Using the last part of Corollary~\ref{cor:kunneth-divided} and $H^d_{st}(\Omega^d) = \kk$, we get that 
\[ \Psi_V(v_1\cdots v_d) = \sum_{\sigma\in\mf{S}_d} \op{sgn}(\sigma) v_{\sigma(1)}\oo\cdots\oo v_{\sigma(d)},\]
that is, $\Psi_V = \hat{\Delta}$ as desired.
\end{proof}

\begin{proposition}\label{prop:stcoh-transpose-maps}
  Given compositions $\ul{a},\ul{b}$ of $d$, and transformation $\phi:D^{\ul{a}} \lra D^{\ul{b}}$, the induced transformation
  \[ H^d_{st}\left(D^{\ul{a}}(\Omega\oo -)\right) \overset{H^d_{st}(\phi)}{\lra} H^d_{st}\left(D^{\ul{b}}(\Omega\oo -)\right)\]
  coincides with $\Xi(\phi)$.
\end{proposition}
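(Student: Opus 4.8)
The statement is the morphism-level input that will let one conclude (via right exactness and Proposition~\ref{prop:stcoh-transpose-Dcomposition}) that $\HH^d_{st}=\Xi$ on all of $\RG$. The plan is to reduce to the case of the elementary (co)multiplication and symmetry maps. Using the canonical identification $\Hom(D^{\ul a},D^{\ul b})=(D^{\ul b})_{\ul a}$ from \eqref{eq:canonical-wtspace=Hom}, the $\kk$-module $\Hom(D^{\ul a},D^{\ul b})$ has a basis indexed by nonnegative integer matrices $C=(c_{ij})$ with row sums $a_i$ and column sums $b_j$, where $\phi_C$ is the composite
\[ D^{\ul a}\ \xrightarrow{\ \Delta\ }\ \textstyle\bigotimes_{i,j}D^{c_{ij}}\ \xrightarrow{\ \tau_C\ }\ \textstyle\bigotimes_{i,j}D^{c_{ij}}\ \xrightarrow{\ \nabla\ }\ D^{\ul b}, \]
with $\Delta$ a tensor product of iterated comultiplications (grouped along the rows of $C$), $\tau_C$ a permutation of the tensor factors, and $\nabla$ a tensor product of iterated multiplications (grouped along the columns of $C$). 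Since $\HH^d_{st}$ (equivalently, the map written $H^d_{st}(\phi)$ in the statement) and $\Xi=\bw^d\oo_{\Gamma^d}-$ are $\kk$-linear functors, it suffices to check that they agree on each of the three types of building blocks occurring here.

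For a divided multiplication $\nabla\colon D^p\oo D^q\to D^{d}$ I would factor $T^{d}=T^p\oo T^q\xrightarrow{\nabla\oo\nabla}D^p\oo D^q\xrightarrow{\nabla}D^{d}$ and apply $\HH^d_{st}$. By Proposition~\ref{prop:kunneth} (applied after evaluation on a module, and naturally in the maps via the K\"unneth isomorphism \eqref{eq:tensor-FG-stcoh}) the first arrow becomes $\hat\nabla\oo\hat\nabla$, and by Lemma~\ref{lem:basic-co-mult-stable} the composite becomes $\hat\nabla\colon T^{d}\to\bw^{d}$; since $\hat\nabla\oo\hat\nabla$ is an epimorphism and exterior multiplication is coassociative, cancellation forces $\HH^d_{st}(\nabla)=\hat\nabla$ on $\bw^p\oo\bw^q$, and iterating this covers all multiplications appearing in $\nabla$. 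Dually, pre-composing a comultiplication $D^{d}\to D^p\oo D^q$ with $\Delta\oo\Delta$ and using that $\hat\Delta\oo\hat\Delta$ is a monomorphism (the functors $\bw^{\bullet}$, $T^{\bullet}$ take projective, hence flat, values on $\Vek$) gives $\HH^d_{st}(\Delta)=\hat\Delta$. For a transposition $\tau\colon D^p\oo D^q\to D^q\oo D^p$ I would compare it with the block transposition $\sigma_0\in\mf{S}_d$ of $T^{d}$ via the evident commuting square built from $\nabla\oo\nabla$; by the last assertion of Corollary~\ref{cor:kunneth-divided}, $\sigma_0$ acts on $H^d(\PP,\Omega^{\oo d})=\kk$ through $\sgn(\sigma_0)=(-1)^{pq}$, so that $\HH^d_{st}(\tau)$ is the Koszul-signed swap on $\bw^p\oo\bw^q$. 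In each of the three cases the functor $\bw^d\oo_{\Gamma^d}-$ sends the same building block to the same exterior (co)multiplication, resp. signed swap --- this is precisely the Akin--Buchsbaum passage from divided to exterior powers, which can be read off from Krause's description of the internal tensor product, or checked directly by evaluating $\bw^d\oo_{\Gamma^d}-$ on the $D^{\ul d}$.

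Putting the pieces together, $\HH^d_{st}(\phi_C)$ and $\Xi(\phi_C)$ are assembled from $\hat\Delta$, $\tau_C$-type signed swaps and $\hat\nabla$ in exactly the same pattern, so they coincide; extending $\kk$-linearly over the basis $\{\phi_C\}$ gives $\HH^d_{st}(\phi)=\Xi(\phi)$ for all $\phi$. The main obstacle is the bookkeeping in the first two steps: writing an arbitrary $\phi$ as an explicit $\kk$-combination of the standard composites $\phi_C$ and matching their factorizations on the two sides, while keeping the signs consistent through the epimorphism/monomorphism cancellation arguments. The only geometric input needed beyond Lemma~\ref{lem:basic-co-mult-stable} is the elementary sign computation in Corollary~\ref{cor:kunneth-divided}.
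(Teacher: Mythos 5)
Your argument is correct in substance, but it is organized differently from the paper's proof. You decompose an arbitrary $\phi\in\Hom(D^{\ul{a}},D^{\ul{b}})$ into the standard matrix basis $\phi_C=\nabla\circ\tau_C\circ\Delta$ (via \eqref{eq:canonical-wtspace=Hom}) and then verify agreement of $\HH^d_{st}$ and $\Xi$ generator by generator --- elementary multiplication, comultiplication, and transposition --- using K\"unneth naturality, Lemma~\ref{lem:basic-co-mult-stable}, the sign statement in Corollary~\ref{cor:kunneth-divided}, and epi/mono cancellation ($\hat{\nabla}\oo\hat{\nabla}$ objectwise surjective, $\hat{\Delta}\oo\hat{\Delta}$ objectwise injective by flatness). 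The paper instead treats an arbitrary $\phi$ in one stroke: it sandwiches $\phi$ as $\psi=\Delta\circ\phi\circ\nabla\in\Hom(T^d,T^d)=\kk[\mf{S}_d]^{op}$, notes $\HH^d_{st}(\psi)=\omega(\psi)$ by Corollary~\ref{cor:kunneth-divided}, and invokes Krause's characterization of $\Xi$ on the divided-power subcategory (the commutative square with $\omega$, \cite{Krause-book}*{Proposition~8.5.3}) together with Lemma~\ref{lem:basic-co-mult-stable} to conclude; no basis decomposition or cancellation arguments are needed, since the injectivity of $\theta\mapsto\hat{\Delta}\circ\theta\circ\hat{\nabla}$ is built into that characterization. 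What your route buys is a more hands-on, self-contained reduction to three explicit checks; what it costs is that you must know the values of $\Xi$ on the elementary generators (that $\Xi(\nabla)=\hat{\nabla}$, $\Xi(\Delta)=\hat{\Delta}$, and $\Xi$ of the swap is the Koszul-signed swap), which you assert via Akin--Buchsbaum/Krause rather than derive --- in effect the same input as the paper's citation, just packaged on generators; pinning down the sign on the transposition deserves the same care on the $\Xi$ side as you give it on the cohomology side. Two small points: the cancellation in the multiplication step uses associativity (not ``coassociativity'') of exterior multiplication; and both your argument and the paper's need the morphism-level naturality of the identification in Proposition~\ref{prop:kunneth} (equivalently of \eqref{eq:tensor-FG-stcoh}), as well as the transport of the $\mf{S}_d$-sign action from $H^d(\PP,\Omega^{\oo d})$ to $\HH^d_{st}(T^d)(V)=T^d(V)$, which is the decomposition argument already carried out in the proof of Lemma~\ref{lem:basic-co-mult-stable} --- so these compressions are at the same level of detail as the paper itself.
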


\begin{proof} The action of $\Xi$ on morphisms between the functors $D^{\ul{d}}$ is characterized by the commutative diagram \cite{Krause-book}*{Proposition~8.5.3}
\[
\xymatrix{
\Hom(D^{\ul{a}},D^{\ul{b}}) \ar[r]^{\Xi} \ar[d]^{(\nabla,\Delta)} & \Hom(\bw^{\ul{a}},\bw^{\ul{b}}) \ar[d]^{(\hat{\nabla},\hat{\Delta})} \\
\Hom(T^d,T^d) \ar[r]^{\omega} & \Hom(T^d,T^d) 
}
\]
where $\Hom(T^d,T^d)$ is identified with $\kk[\mf{S}_d]^{op}$, and $\omega(\sigma) = \sgn(\sigma)\cdot\sigma$ for all $\sigma\in\mf{S}_d$. It follows that if we denote by $\psi$ the composition
\[ T^d \overset{\nabla}{\lra} D^{\ul{a}} \overset{\phi}{\lra} D^{\ul{b}} \overset{\Delta}{\lra} T^d\]
then 
\[\omega(\psi) = \hat{\Delta} \circ \Xi(\phi) \circ \hat{\nabla}.\]
Note also that by Corollary~\ref{cor:kunneth-divided}, we have $\HH^d_{st}(\psi) = \omega(\psi)$. Combining functoriality of stable cohomology with Lemma~\ref{lem:basic-co-mult-stable} we get that $H^d_{st}(\psi)$ is given by the composition
\[\HH^d_{st}(\psi) = \hat{\Delta} \circ \HH^d_{st}(\phi) \circ \hat{\nabla}.\]
It follows that $\HH^d_{st}(\phi)=\Xi(\phi)$, as desired.
\end{proof}

We are now ready to prove the main result of the section.

\begin{proof}[Proof of Theorem~\ref{thm:stcoh-transpose-duality}]
 We consider a projective resolution $(\mc{F}_{\bullet},\pd_i)$ of $\mc{P}$, where 
 \[ \mc{F}_j = \bigoplus_k D^{\ul{d}(j,k)}\text{ for some finite collection of compositions }\ul{d}(j,k)\text{ of d}.\]
 It follows from Proposition~\ref{prop:stcoh-transpose-Dcomposition} that $\mc{F}_j(\Omega\oo V)$ has cohomology concentrated in degree $d$, hence by the hypercohomology spectral sequence, the complex
 \[ G_{\bullet}(V) = H^d_{st}\left(\mc{F}_{\bullet}(\Omega \oo V)\right)\]
 computes stable cohomology for $\mc{P}(\Omega\oo V)$: we have
 \[ H_j(G_{\bullet}(V)) = H^{d-j}_{st}\left(\mc{P}(\Omega \oo V)\right) \quad \text{ for all }j.\]
 To verify the theorem, it is then enough to show that $G_{\bullet}(V)$ is naturally isomorphic to $\Xi(\mc{F}_{\bullet})(V)$, whose $j$-th homology computes $\LL_j\Xi(\mc{P})(V)$. By Proposition~\ref{prop:stcoh-transpose-Dcomposition}, the terms in $G_{\bullet}(V)$ are given by
 \[ G_j(V) = \bigoplus_k \bw^{\ul{d}(j,k)}V = \Xi(\mc{F}_j)(V),\]
 while the differentials are given by $\Xi(\partial_j)(V)$ by Proposition~\ref{prop:stcoh-transpose-maps}, concluding our proof.
\end{proof}

\subsection{The proof of  Theorem~\ref{thm:short-res-Schur}}
\label{subsec:short-res}

We let $r(\mu)=d-\mu_1$ and we prove conclusion (1) by induction on $r(\mu)$. If $r(\mu)=0$ then $\bb{W}_{\mu}=D^{\mu_1}$ and there is nothing to show. For the induction step, let $\ol{\mu}=(\mu_2,\mu_3,\cdots)$. There exists a short exact sequence
    \[ 0 \lra K \lra D^{\mu_1} \oo \bb{W}_{\ol{\mu}} \lra \bb{W}_{\mu} \lra 0,\]
    where, by Pieri's rule, $K$ has a filtration with composition factors of the form $\bb{W}_{\gamma}$ with $r(\gamma)<r(\mu)$. By induction, $K$ admits a divided power resolution of length $\leq r(\mu)-1$. Also by induction we have that $\bb{W}_{\ol{\mu}}$ has a resolution of length $\leq r(\mu)$. Since tensoring with $D^{\mu_1}$ preserves exactness, we conclude by the horseshoe lemma that $\bb{W}_{\mu}$ has the desired resolution of length $\leq r(\mu)$. 
    
To prove conclusion (2), we apply (1) to $\ll=\mu'$ to obtain a divided power resolution $\mc{F}_{\bullet}$ of $\bb{W}_{\ll}$ whose length is $\leq |\ll|-\ll_1=d-\ell$ (since $\ll_1=\ell(\mu)$). Applying the functor $\Xi$ to $\mc{F}_{\bullet}$ and using \cite{Krause-duality}*{Proposition~4.16}, it follows that $\Xi(\mc{F}_{\bullet})$ is a resolution of $\bb{S}_{\ll'}=\bb{S}_{\mu}$ by tensor products of exterior powers, as desired.

To prove (3), consider a resolution $\mc{G}_{\bullet}$ of $\bb{S}_{\mu}$ as in (2), so that $\mc{G}_j=0$ for $j>d-\ell$. Using \eqref{eq:coh-Omegai} and Proposition~\ref{prop:kunneth}, it follows that each term of $\mc{G}_{\bullet}$ has cohomology supported in degree $d$. It follows that
\[ H^j_{st}\left(\bb{S}_{\mu}\right) = H_{d-j}\left(H^d_{st}(\mc{G}_{\bullet})\right) = 0\text{ for }d-j>d-\ell.\]
This proves that $H^j_{st}\left(\bb{S}_{\mu}\right)=0$ for $j<\ell$, while the vanishing for $j>d$ comes from Theorem~\ref{thm:Hjst-to-Modk}.

To prove (4), we use the identification $\Ext^j\left(\bb{S}_{\mu},\bw^d\right)=\Ext^j\left(\bw^d,\bb{W}_{\mu}\right)$, which can then be calculated using Theorem~\ref{thm:Ext-from-spec-complexes}(1) as
\[ \Ext^j\left(\bw^d,\bb{W}_{\mu}\right) = H_{d-j}\left(\bb{W}_{\mu}(\kk^\bullet)^{full} \right).\]
Note that if $\ul{d}\in\bb{Z}^k_{>0}$, then a weight space $(\bb{W}_{\mu})_{\ul{d}}$ can only be non-zero if $k\geq\ell$. In particular we get $\Ext^j\left(\bw^d,\bb{W}_{\mu}\right)=0$ if $d-j<\ell$, or equivalently $j>d-\ell$, as desired.

\begin{example}\label{ex:res-W222}
 Consider the partition $\mu=(2,2,2)$. If we trace through the inductive argument for proving Theorem~\ref{thm:short-res-Schur}(1), it follows that $\bb{W}_{\mu}$ admits a projective resolution of length $4$, with terms given by
    $$\left(D^6\right)^{\oplus 2} \longrightarrow 
    \begin{matrix}
        \left(D^6\right)^{\oplus 3} \\ \oplus \\ \left(D^{(5,1)}\right)^{\oplus 2} \\ \oplus \\ D^{(4,2)} 
    \end{matrix} 
    \longrightarrow 
    \begin{matrix}
        D^6 \\ \oplus \\ \left(D^{(5,1)}\right)^{\oplus 2} \\ \oplus \\ \left(D^{(4,2)}\right)^{\oplus 2}\\ \oplus \\ D^{(3,3)} \\ \oplus \\ D^{(4,1,1)} 
    \end{matrix} 
    \longrightarrow 
    \begin{matrix}
        \left(D^{(4,2)}\right)^{\oplus 2} \\ \oplus \\ \left(D^{(3,2,1)}\right)^{\oplus 2}
    \end{matrix} 
    \longrightarrow 
    D^{(2,2,2)}.$$
 Note that in total one gets $20$ summands which are tensor products of divided powers. If instead one follows the method from \cite{AB2}*{Section~4} then the resulting resolution has length $5$ and a total of $24$ summands. The resolution constructed in \cite{san-yud}*{Theorem~5.2} has length $6$ and $412$ summands.
\end{example}

\subsection{Some consequences when $\kk$ is a field}
\label{subsec:stcoh-kfield}

In this section we assume that $\kk$ is a field of characteristic $p>0$, and discuss some quick, but important consequences of Theorem~\ref{thm:stcoh-transpose-duality}. We begin with the proof of the periodicity conjecture \cite{GRV}*{Conjecture~4.2} for stable cohomology.

\begin{proof}[Proof of Theorem~\ref{thm:periodicity}]
 We apply Theorem~\ref{thm:stcoh-transpose-duality} to the polynomial functor $\mc{P}=\bb{S}_{\mu}$, and evaluate both sides of the equality on the vector space $V=\kk$ to obtain
 \[ H^j_{st}\left(\bb{S}_{\mu}\right) = \LL_{d-j}\Xi(\bb{S}_{\mu})(\kk) = \Ext^{d-j}\left(\bb{S}_{\mu},\bw^d\right)^{\vee},\]
 where the last equality uses the description in \cite{chalup-Adv}*{Definition~2.3} for Koszul duality. Moreover, using \cite{chalup-Adv}*{Corollary~2.5}, this is further equal to
 \[  \Ext^{d-j}\left(\bb{W}_{\mu},D^d\right)^{\vee} = \Ext^{d-j}\left(\bb{W}_{\mu[q]},D^{d+q}\right)^{\vee}\]
 where the equality follows from \cite{periodicity-mal-ste}*{Theorem~1.1}. Translating back to stable cohomology, we get
 \[ \Ext^{d-j}\left(\bb{W}_{\mu[q]},D^{d+q}\right)^{\vee} = H^{j+q}_{st}\left(\bb{S}_{\mu[q]}\Omega\right)\]
 which yields the desired conclusion.
\end{proof}

Another consequence of Theorem~\ref{thm:stcoh-transpose-duality} employs the compatibility of stable cohomology and Frobenius \cite{RV}*{Theorem~4.4} to deduce the following (see also \cite{chalup-Adv}*{Proposition~2.6}, \cite{touze}*{Proposition~6.6}).

\begin{corollary}\label{cor:transpose-duality-Frobenius}
 If $\mc{P}$ has degree $d$, then for all $j\in \bb{Z}$ and all $q=p^k$ we have
 \[ \LL_{j+(q-1)d}\Xi\left(F^q\circ\mc{P}\right) = F^q\circ\left(\LL_j\Xi(\mc{P})\right).\]
\end{corollary}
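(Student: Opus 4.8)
The plan is to deduce the corollary from Theorem~\ref{thm:stcoh-transpose-duality} by translating the statement about derived functors of $\Xi$ into one about stable cohomology, and then invoking the known compatibility of stable cohomology with the Frobenius twist. Concretely, using Theorem~\ref{thm:stcoh-transpose-duality} twice, the left-hand side $\LL_{j+(q-1)d}\Xi(F^q\circ\mc{P})$ is, as a polynomial functor, the functor $V\mapsto H^{e-(j+(q-1)d)}_{st}\bigl((F^q\circ\mc{P})(\Omega\oo V)\bigr)$, where $e=\deg(F^q\circ\mc{P})=qd$; note $e-(j+(q-1)d) = qd - j - (q-1)d = d-j$. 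So the left-hand side evaluated at $V$ is $H^{d-j}_{st}\bigl((F^q\circ\mc{P})(\Omega\oo V)\bigr)$. On the other hand, $F^q\circ(\LL_j\Xi(\mc{P}))$ evaluated at $V$ is $(\LL_j\Xi(\mc{P}))(F^q V) = (\LL_j\Xi(\mc{P}))(V^{(q)}) = H^{d-j}_{st}\bigl(\mc{P}(\Omega\oo V^{(q)})\bigr)$, again by Theorem~\ref{thm:stcoh-transpose-duality}. So the corollary is equivalent to a natural isomorphism
\[
 H^{d-j}_{st}\bigl((F^q\circ\mc{P})(\Omega\oo V)\bigr) \;\cong\; H^{d-j}_{st}\bigl(\mc{P}(\Omega\oo V^{(q)})\bigr)\quad\text{for all }j,\ V.
\]

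The key input for this is \cite{RV}*{Theorem~4.4}, which records the compatibility of stable cohomology with Frobenius; I would apply it to the functor $\mc{Q}=\mc{P}(-\oo V)$ (or more precisely, to a resolution of it). The cleanest route: take a projective resolution $\mc{F}_\bullet\to\mc{P}$ by direct sums of $D^{\ul{d}}$ as in the proof of Theorem~\ref{thm:stcoh-transpose-duality}; then $F^q\circ\mc{F}_\bullet$ resolves $F^q\circ\mc{P}$ since $F^q$ is exact, and each $F^q\circ D^{\ul{d}} = D^{\ul{d}}\circ F^q$ evaluated on $\Omega\oo V$ should have stable cohomology supported in degree $qd$ and equal to $\bw^{\ul{d}}(V^{(q)})$ — this is precisely the Frobenius-twisted analogue of Proposition~\ref{prop:stcoh-transpose-Dcomposition}, and is where \cite{RV}*{Theorem~4.4} enters, giving $H^{qd}_{st}\bigl((F^q\circ D^{\ul{d}})(\Omega\oo V)\bigr) = H^{d}_{st}\bigl(D^{\ul{d}}(\Omega\oo V^{(q)})\bigr)$. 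Feeding this into the hypercohomology spectral sequence exactly as in the proof of Theorem~\ref{thm:stcoh-transpose-duality} then identifies the two complexes computing both sides, with matching differentials (the differentials on both sides are $\Xi$ applied to $\pd_\bullet$, resp.\ to $F^q\circ\pd_\bullet$, and these agree under the identification because $\Xi$ commutes with the relevant Frobenius bookkeeping on morphisms between divided powers).

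The main obstacle I anticipate is the careful bookkeeping of the degree shift and the Frobenius twist on the \emph{morphism} level: one must check that the naturality in $V$ is genuine (i.e.\ that the isomorphism $H^{qd}_{st}\bigl((F^q\circ D^{\ul{d}})(\Omega\oo V)\bigr)\cong\bw^{\ul{d}}(V^{(q)})$ is natural in $V$, so that it assembles the polynomial functor $F^q\circ\bw^{\ul{d}}$), and that the differentials induced by $F^q\circ\pd_\bullet$ on stable cohomology are correctly identified with $F^q$ applied to the differentials $\Xi(\pd_\bullet)$ obtained in the untwisted case. This is essentially a matter of combining the statement of \cite{RV}*{Theorem~4.4} with Proposition~\ref{prop:stcoh-transpose-maps}, but one should verify that the isomorphism in \cite{RV}*{Theorem~4.4} is compatible with the multiplication/comultiplication maps $\nabla,\Delta$ that control $\Xi$ on morphisms; granting that, the rest is the same spectral sequence argument as in Theorem~\ref{thm:stcoh-transpose-duality}. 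Alternatively — and this may be cleaner to write — one can argue abstractly: $F^q\circ(-)$ is exact and sends $D^{\ul{d}}$ to an object whose $\Xi$-acyclicity and value are governed by \cite{RV}*{Theorem~4.4}, so $\LL_\bullet\Xi\circ(F^q\circ -)$ and $(F^q\circ -)\circ\LL_\bullet\Xi$ agree on projectives up to the stated shift, hence on all of $\RG$ by the usual derived-functor comparison (cf.\ \cite{chalup-Adv}*{Proposition~2.6}, \cite{touze}*{Proposition~6.6}).
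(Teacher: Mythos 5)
Your opening reduction is exactly the paper's argument: by Theorem~\ref{thm:stcoh-transpose-duality} the left-hand side evaluated at $V$ is $H^{d-j}_{st}\left((F^q\circ\mc{P})(\Omega\oo V)\right)=H^{d-j}_{st}\left(\mc{P}(F^q\Omega\oo F^qV)\right)$ and the right-hand side is $H^{d-j}_{st}\left(\mc{P}(\Omega\oo F^qV)\right)$, so the whole corollary amounts to removing the Frobenius twist from the $\Omega$-slot. The paper finishes in one step by applying \cite{RV}*{Theorem~4.4} to the single functor $\mc{P}(-\oo F^qV)$; no resolutions, spectral sequences, or twisted analogues of Proposition~\ref{prop:stcoh-transpose-Dcomposition} are needed, and crucially this application does not change the cohomological degree.

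The resolution-based execution you propose contains a genuine error at exactly this point: you assert that $(F^q\circ D^{\ul{d}})(\Omega\oo V)$ has stable cohomology concentrated in degree $qd$, and you use a degree-shifting form $H^{qd}_{st}\left((F^q\circ D^{\ul{d}})(\Omega\oo V)\right)=H^{d}_{st}\left(D^{\ul{d}}(\Omega\oo F^qV)\right)$ of \cite{RV}*{Theorem~4.4}. Both claims are false: the Frobenius compatibility preserves the degree, so the cohomology of $D^{\ul{d}}(F^q\Omega\oo F^qV)$ is concentrated in degree $d$ (not $qd$), with value $\bw^{\ul{d}}(F^qV)$. Already for $\ul{d}=(1)$ the twisted Euler sequence $0\to F^q\Omega\to\mc{O}_{\PP}(-q)^{\oplus(N+1)}\to\mc{O}_{\PP}\to 0$ with $N\geq q$ gives $H^i(\PP,F^q\Omega)=\kk$ for $i=1$ and $0$ otherwise. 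Moreover the error is not cosmetic: if the concentration degree really were $qd$, your spectral-sequence argument would identify $H^{qd-i}_{st}\left((F^q\circ\mc{P})(\Omega\oo V)\right)$ with $\LL_i\Xi(\mc{P})(F^qV)$ and hence output $\LL_j\Xi(F^q\circ\mc{P})=F^q\circ\LL_j\Xi(\mc{P})$ with no homological shift, contradicting the statement being proved (and your own first paragraph, whose degrees are correct). The shift $(q-1)d$ comes precisely from the gap between the top degree $qd$ of $F^q\circ\mc{P}$ used in Theorem~\ref{thm:stcoh-transpose-duality} and the concentration degree $d$ of the twisted divided-power terms. Once you correct the concentration degree to $d$, your argument does close up, but it then merely re-runs the proof of Theorem~\ref{thm:stcoh-transpose-duality}; the short route is the paper's direct application of \cite{RV}*{Theorem~4.4} to $\mc{P}(-\oo F^qV)$.
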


\begin{proof} Since $F^q\circ\mc{P}$ has degree $dq$, we get from Theorem~\ref{thm:stcoh-transpose-duality} that
\[\LL_{j+(q-1)d}\Xi\left(F^q\circ\mc{P}\right)(V) = H^{d-j}_{st}\left((F^q\circ\mc{P})(\Omega\oo V)\right) = H^{d-j}_{st}\left(\mc{P}(F^q\Omega\oo F^qV)\right)\]
Applying \cite{RV}*{Theorem~4.4} to the functor $\mc{P}\left( - \oo F^qV\right)$, this is further equal to
\[ H^{d-j}_{st}\left(\mc{P}(\Omega\oo F^qV)\right) = F^q\left(H^{d-j}_{st}\left(\mc{P}(\Omega\oo V)\right) \right) = F^q(\LL_j\Xi(\mc{P})(V)),\]
where the last equality follows again by Theorem~\ref{thm:stcoh-transpose-duality}.
\end{proof}

\section{Extensions between Schur/Weyl functors associated to hooks}
\label{sec:hook-Ext}

The goal of this section is to prove Theorem~\ref{thm:introExtComputation} concerning the invariance property for extension groups between Schur functors associated to hooks. Using \eqref{eq:ext-weyl-schur=eqs}, this can be reformulated in terms of extensions between Weyl functors, which is most convenient for the structure of our argument. We work in the category $\RG$ where $\kk$ is any commutative ring, and we write $d=a+b=A+B$ with $A\geq a>0$, and $\Delta=A-a$. Based on \eqref{eq:ext-weyl-schur=eqs} and Theorem~\ref{thm:stcoh-transpose-duality},  Theorem~\ref{thm:introExtComputation} follows directly from the following. 

\begin{theorem}\label{thm:Ext-hook=Ext-divided}
 If $0\leq\Delta<A\leq d$ and if we set $B=d-A$, then
 \[ \Ext^i\left(\bb{W}_{(d-\Delta,1^{\Delta})},\bb{W}_{(d)} \right) = \Ext^i\left(\bb{W}_{(A-\Delta,1^{B+\Delta})},\bb{W}_{(A,1^B)} \right) \text{ for all }i.\]
\end{theorem}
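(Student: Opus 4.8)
\emph{Reduction.} By \eqref{eq:ext-weyl-schur=eqs} together with Theorem~\ref{thm:stcoh-transpose-duality}, Theorem~\ref{thm:introExtComputation} follows once Theorem~\ref{thm:Ext-hook=Ext-divided} is proved, so I would concentrate on the latter and translate it into a statement about the specialization complexes of Section~\ref{subsec:spec-Ext}. Applying Theorem~\ref{thm:Ext-from-spec-complexes}(1) with $\mu=(A-\Delta,1^{B+\Delta})$ (so $a=A-\Delta$ and $b=B+\Delta$) gives
\[
\Ext^i\!\left(\bb{W}_{(A-\Delta,1^{B+\Delta})},\,\bb{W}_{(A,1^B)}\right)\;=\;H_{B+\Delta+1-i}\!\left(\mc{F}^{A-\Delta}_{\bullet}\big(\bb{W}_{(A,1^B)}\big)\right),
\]
and likewise the right-hand side of Theorem~\ref{thm:Ext-hook=Ext-divided} equals $H_{\Delta+1-i}\big(\mc{F}^{d-\Delta}_{\bullet}(D^d)\big)$. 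Thus it suffices to show that $\mc{F}^{A-\Delta}_{\bullet}(\bb{W}_{(A,1^B)})$ has the same homology as $\mc{F}^{d-\Delta}_{\bullet}(D^d)$, up to a shift of $B$ in homological degree.

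\emph{The shift functor and an $A$-independent model.} The key computational input is the identification $\bb{W}_{(A,1^B)}^{(a)}\cong D^{A-a}\oo\bw^B$ for $1\le a\le A$. I would deduce this from the branching rule for Weyl functors: since columns of a tableau strictly increase, the cells carrying the smallest entry of $\kk\oplus V$ occupy a single row, whence $\op{sh}(\bb{W}_{\ll})=\bigoplus_{a\ge0}\bb{W}_{\ll/(a)}$, and for $\ll=(A,1^B)$ with $a\ge1$ the skew shape $(A,1^B)/(a)$ is the disjoint union of a row of length $A-a$ and a column of length $B$. By \eqref{eq:grFP-full-Pa} this identifies the successive quotients of the filtration \eqref{eq:filtr-Pfull-complex} of $\mc{F}^{A-\Delta}_{\bullet}(\bb{W}_{(A,1^B)})$ with the complexes $(D^c\oo\bw^B)(\kk^{\bullet-1})^{full}$ for $c=A-a=0,1,\dots,\Delta$. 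Pushing this further by tracking the specialization maps $\psi_i$ directly on weight spaces — where fixing the $A-c$ cells of the first row of $\bb{W}_{(A,1^B)}$ that carry the minimal entry yields the isomorphism with the skew functor $\bb{W}_{(A,1^B)/(A-c)}=D^c\oo\bw^B$ — one sees that $\mc{F}^{A-\Delta}_{\bullet}(\bb{W}_{(A,1^B)})$ is isomorphic to a complex $\mathcal{C}(\Delta,B)$ whose terms and differentials depend only on $\Delta$ and $B$: the $\psi_i$ with $i\ge 2$ act as the internal specialization differential of $D^c\oo\bw^B$, while $\psi_1$ carries the summand indexed by $c$ to those indexed by $c'<c$ through a natural contraction $D^c\to D^{c'}$ composed with the projection annihilating the exterior factor. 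In particular $\mc{F}^{d-\Delta}_{\bullet}(D^d)=\mathcal{C}(\Delta,0)$, and Theorem~\ref{thm:Ext-hook=Ext-divided} becomes the assertion $H_n\big(\mathcal{C}(\Delta,B)\big)=H_{n-B}\big(\mathcal{C}(\Delta,0)\big)$.

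\emph{Induction.} I would prove this last assertion by induction on $d=A+B$, the case $B=0$ being trivial. For the homology of each summand $(D^c\oo\bw^B)(\kk^{\bullet-1})^{full}$ of $\mathcal{C}(\Delta,B)$ one has, via Theorem~\ref{thm:Ext-from-spec-complexes}(1) applied to $\bw^{c+B}$, the group $\Ext^{\bullet}(\bw^{c+B},D^c\oo\bw^B)$; the short exact sequence $0\to\bb{W}_{(c+1,1^{B-1})}\to D^c\oo\bw^B\to\bb{W}_{(c,1^B)}\to0$ from Theorem~\ref{thm:Ext-from-spec-complexes}(3) then expresses this in terms of $\Ext$ groups between \emph{hooks of size} $c+B$, and since $c\le\Delta<A$ we have $c+B<d$, so these are controlled by the inductive hypothesis. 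Feeding this into the spectral sequence of the filtration of $\mathcal{C}(\Delta,B)$ and comparing with the corresponding spectral sequence for $\mathcal{C}(\Delta,0)$ produces the desired homological shift. The main obstacle is the bookkeeping in this comparison: one must make it compatible with all of the higher differentials — equivalently, verify that the maps $\psi_1$ match up across the identifications of Step~2 — which requires tracking the connecting homomorphisms through the compatibility statement of Theorem~\ref{thm:Ext-from-spec-complexes}(3).
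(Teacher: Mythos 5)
Your reduction to the specialization complexes and the identification of the associated graded pieces via $\bb{W}_{(A,1^B)}^{(a)}\cong D^{A-a}\oo\bw^B$ (i.e.\ \eqref{eq:grFP-full-Pa}) match the paper's setup, but the central claim of your second step is false: the complex $\mc{F}^{A-\Delta}_{\bullet}\bigl(\bb{W}_{(A,1^B)}\bigr)$ is \emph{not} isomorphic to a complex $\mathcal{C}(\Delta,B)$ depending only on $(\Delta,B)$. The components of $\psi_1$ from the summand indexed by $c$ to the one indexed by $c'<c$ are not ``a natural contraction $D^c\to D^{c'}$''; on weight spaces they are induced by the divided-power multiplication absorbing $c-c'$ boxes into the block of $1$'s, whose structure constants are binomial coefficients $\binom{A-c'}{c-c'}$ and hence depend on $A$. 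The simplest instance already kills the claim: for $\Delta=1$, $B=0$ the complex $\mc{F}^{d-1}_{\bullet}(D^d)$ is $\kk\xra{\ \times d\ }\kk$ (see the $D^3$ part of Example~\ref{ex:W22-Ext}, where the coefficients $2,3$ are exactly such binomials), so $\mathcal{C}(1,0)$ would have to be independent of $d$ while $\Hom\bigl(\bb{W}_{(d-1,1)},D^d\bigr)$ is $\kk$ when $p\mid d$ and $0$ otherwise. More globally, if $\mathcal{C}(\Delta,B)$ existed then $\Ext^{\bullet}\bigl(\bb{W}_{(A-\Delta,1^{B+\Delta})},\bb{W}_{(A,1^B)}\bigr)$ would depend only on $(\Delta,B)$ and not on $d$, contradicting the explicit formulas of Theorem~\ref{thm:dimExt-hook-2row} (cf.\ Example~\ref{ex:E3-tu}). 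Note that the theorem you are proving asserts an $A$-invariance of homology only for fixed $d=A+B$, with the two hooks sliding simultaneously; it does not assert any invariance of the complexes themselves.

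There is a second, structural gap: even if the $E_1$-pages of your two filtered complexes could be matched using the inductive hypothesis, a spectral-sequence comparison requires a map of filtered complexes inducing that matching, and you explicitly defer this (``the main obstacle is the bookkeeping''). That map is precisely the hard content. The paper's proof constructs it explicitly: a twisted Koszul operator $\Phi$ (Section~\ref{subsec:twisted-Koszul}) giving chain maps $\mc{F}^a_{\bullet}(\bb{W}_{(A,1^B)})\to\mc{F}^{a-1}_{\bullet+1}(\bb{W}_{(A-1,1^{B+1})})$, its divided powers $\Phi^{[B]}$ satisfying $\Phi\circ\Phi^{[B]}=(B+1)\Phi^{[B+1]}$ (needed because iterating $\Phi$ alone fails integrally), and then a proof that $\Phi^{[B]}$ is a quasi-isomorphism by exhibiting a retraction $\Theta_{[B]}$ on the associated graded pieces and invoking the abstract isomorphism $\Ext^{\bullet}\bigl(\bw^b,D^{\Delta}\oo\bw^B\bigr)\simeq\Ext^{\bullet}\bigl(\bw^{\Delta},D^{\Delta}\bigr)$ (Kulkarni) to upgrade the resulting surjections to isomorphisms. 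Without an analogue of $\Phi^{[B]}$ (or at least of the retraction argument), your induction does not close.
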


\noindent{\bf The proof strategy.} We recall from Theorem~\ref{thm:Ext-from-spec-complexes} that the specialization complexes $\mc{F}^a_{\bullet}(\bb{W}_{(A,1^B)})$ provide explicit models for the computation of $\Ext^\bullet\left(\bb{W}_{(a,1^b)},\bb{W}_{(A,1^B)} \right)$. The most technical part of our argument is the construction of a morphism of complexes
 \begin{equation}\label{eq:Phi-on-Fa}
  \Phi:  \mc{F}^a_{\bullet}\left(\bb{W}_{(A,1^B)}\right) \lra \mc{F}^{a-1}_{\bullet+1}\left(\bb{W}_{(A-1,1^{B+1})}\right)
  \end{equation}
 which is based on the construction of some twisted analogues of Koszul differentials and is explained in Section~\ref{subsec:twisted-Koszul}. While $\Phi$ is in general not a quasi-isomorphism, it can be iterated to define morphisms of complexes
\begin{equation}\label{eq:def-PhiB-pre}
 \Phi^{[B]}:\mc{F}^{a+B}_{\bullet}(\bb{W}_{(d)}) \lra \mc{F}^a_{\bullet+B}(\bb{W}_{(A,1^B)})
\end{equation}
satisfying the divided power relations
\begin{equation}\label{eq:PhiB-div-power}
 \Phi\circ\Phi^{[B]} = (B+1)\cdot \Phi^{[B+1]},
\end{equation}
as explained in Section~\ref{subsec:div-pow-Phi}. The goal is then to prove that the divided power $\Phi^{[B]}$ is a quasi-isomorphism, which is done in Section~\ref{subsec:PhiB-quasi-isom} and yields Theorem~\ref{thm:Ext-hook=Ext-divided}. To do so we consider the filtrations \eqref{eq:filtr-Pfull-complex}, and reduce our problem to proving that the induced maps
 \[ \Phi^{[B]}:\mf{gr}\mc{F}^{a+B}_{\bullet}(\bb{W}_{(d)}) \lra \mf{gr}\mc{F}^a_{\bullet+B}(\bb{W}_{(A,1^B)})\]
are quasi-isomorphisms. For that we construct explicit retractions
  \[ \Theta^{[B]}:\mf{gr}\mc{F}^a_{\bullet+B}(\bb{W}_{(A,1^B)}) \lra \mf{gr}\mc{F}^{a+B}_{\bullet}(\bb{W}_{(d)}) \]
 which then induce surjective maps $H_n(\Theta^{[B]})$ for all $n$ (left inverses to $H_n(\Phi^{[B]})$). Applying Theorem~\ref{thm:Ext-from-spec-complexes}(2) together with a well-known degree reduction statement for $\Ext$ groups allows us to conclude that
 \[ H_n(\mf{gr}\mc{F}^a_{\bullet+B}(\bb{W}_{(A,1^B)})) \simeq \Ext^{\Delta+1-n}\left(\bw^{\Delta},D^{\Delta}\right)  \simeq H_n(\mf{gr}\mc{F}^{a+B}_{\bullet}(\bb{W}_{(d)}))\]
 are abstractly isomorphic. The surjections $H_n(\Theta^{[B]})$ must therefore be isomorphisms, which is enough to conclude that $\Phi^{[B]}$ is a quasi-isomorphism. We note that our constructions make sense over an arbitrary commutative ring $\kk$, and are compatible with base change. Therefore running the strategy above for $\kk=\bb{Z}$ is sufficient in order to deduce the conclusions over an arbitrary $\kk$. This observation will be crucial in verifying that $\Phi^{[B]}$ is a morphism of complexes at the end of Section~\ref{subsec:div-pow-Phi}, and won't be used otherwise.

\subsection{Twisted Koszul maps}
\label{subsec:twisted-Koszul}

The standard basis of $\kk^n$ gives rise to corresponding standard bases
\[ \{e^{\a} = e_1^{(\a_1)}\cdots e_n^{(\a_n)} : \a_1+\cdots+\a_n = A\} \quad\text{ for }\quad D^A(\kk^n),\]
\[\{e_{\b} = e_{\b_1}\wedge\cdots\wedge e_{\b_B} : 1\leq\b_1<\cdots<\b_B \leq n\} \quad\text{ for }\quad \bw^B\kk^n.\]
We define \defi{contraction operators}
\begin{equation}\label{eq:contraction-eta}
 \eta_j : D^A(\kk^n) \lra D^{A-1}(\kk^{n}),\quad\eta_j(e^{\a}) = \begin{cases}
  e_1^{(\a_1)}\cdots e_j^{(\a_j-1)}\cdots e_n^{(\a_n)} & \text{if }\a_j > 0, \\
  0 & \text{otherwise,}
 \end{cases}
\end{equation}
and extend them, in order to simplify the notation later on, to operators
\[\eta_j : \left(D^A\oo\bw^B\right)(\kk^n) \lra \left(D^{A-1}\oo\bw^B\right)(\kk^{n}),\quad \eta_j(e^{\a} \oo e_{\b})=\eta_j(e^{\a})\oo e_{\b}.\]
Although contraction operators make sense for $\bw^B$ as well, we will not employ them hence the notation above should not be ambiguous. We will use the exterior multiplication maps $-\wedge e_j:\bw^B(\kk^n) \lra \bw^{B+1}(\kk^n)$ and extend them naturally to maps $-\wedge e_j:(D^A\oo\bw^B)(\kk^n) \lra (D^A\oo\bw^{B+1})(\kk^n)$. The contraction operators satisfy the following compatibility relations with generization maps \eqref{eq:gener-maps} and specialization maps \eqref{eq:spec-maps}:
\begin{equation}\label{eq:comp-eta-psi}
 \eta_j \psi^i = \begin{cases}
  \psi^i\eta_j & \text{if }j\leq i \\
  0 & \text{if }j=i+1 \\
  \psi^i\eta_{j-1} & \text{if }j>i+1
 \end{cases}
 \qquad\qquad
  \eta_j \psi_i = \begin{cases}
  \psi_i\eta_j & \text{if }j< i \\
  \psi_i(\eta_j+\eta_{j+1}) & \text{if }j=i \\
  \psi_i\eta_{j+1} & \text{if }j\geq i+1
 \end{cases}
\end{equation}

There is a natural transformation $\Upsilon:D^{A+1}\oo\bw^{B-1} \lra D^{A}\oo\bw^{B}$ given by a Koszul map, which can be written explicitly when evaluated on $\kk^n$ as
\[\Upsilon(x) = \sum_{i=1}^n \eta_i(x) \wedge e_i,\]
and it has the property that $\coker\Upsilon = \bb{W}_{(A,1^B)}$ (and $\ker\Upsilon = \bb{W}_{(A+1,1^{B-1})}$). We will always take the point of view that $\Upsilon$ gives a presentation of the Weyl module $\bb{W}_{(A,1^B)}$, and when we write $e^{\a} \oo e_{\b}\in \bb{W}_{(A,1^B)}(\kk^n)$, it should be interpreted as the residue class of $e^{\a} \oo e_{\b}\in (D^{A}\oo\bw^{B})(\kk^n)$ modulo the image of $\Upsilon$. The standard pictorial visualization of $e^{\a} \oo e_{\b}\in \bb{W}_{(A,1^B)}(\kk^n)$ is in the form of a Young tableau of shape $(A,1^B)$:
\begin{equation}\label{eq:tableau-from-mon}
\begin{aligned}
\begin{ytableau}
1 & \dots & 1 & 2 & \dots & 2 & \dots & n & \dots & n\\
\b_1 \\
\b_2 \\
\b_3 \\
\vdots \\
\end{ytableau}
\end{aligned}
\end{equation}
where in the first row $1$ is repeated $\a_1$ times, $2$ is repeated $\a_2$ times, etc. The image of $\Upsilon$ gives the straightening relations among such tableaux (see also \cite{mal-ste-hookWeyl}*{Section~2.2}).

We will be interested in a twisted version of $\Upsilon$ which is no longer a natural transformation between functors. It is defined (for all $A,B,n$) as
\begin{equation}\label{eq:def-Phi-divpow-wedge}
 \Phi:\left(D^A\oo\bw^B\right)(\kk^n) \lra \left(D^{A-1}\oo\bw^{B+1}\right)(\kk^{n+1}),\quad\quad \Phi(x) = \sum_{1\leq t\leq s\leq n}(-1)^s\psi^s(\eta_t(x))\wedge e_{s+1}
\end{equation}

\begin{example}\label{ex:Phi-div-wedge}
 If $n=4$, $A=6$, $B=2$, and $x=e_1^{(3)}e_2^{(1)}e_4^{(2)} \oo e_1\wedge e_3$, then $\Phi(x)$ is
 \[
 \begin{aligned}
 & e_1^{(2)}e_2^{(1)}e_4^{(2)} \oo e_1\wedge e_3\wedge e_5 - e_1^{(2)}e_2^{(1)}e_5^{(2)} \oo e_1\wedge e_3\wedge e_4 + e_1^{(2)}e_2^{(1)}e_5^{(2)} \oo e_1\wedge e_4\wedge e_3 - e_1^{(2)}e_3^{(1)}e_5^{(2)} \oo e_1\wedge e_4\wedge e_2   \\
 +\ & e_1^{(3)}e_4^{(2)} \oo e_1\wedge e_3\wedge e_5 - e_1^{(3)}e_5^{(2)} \oo e_1\wedge e_3 \wedge e_4 + e_1^{(3)}e_5^{(2)} \oo e_1\wedge e_4 \wedge e_3 \\
 +\ & e_1^{(3)}e_2^{(1)}e_4^{(1)} \oo e_1\wedge e_3\wedge e_5
 \end{aligned}
 \]
 where the rows correspond to $t=1,2,4$, and in each row the terms are aligned from $s=4$ down to $s=t$. Notice that the weight of $x$ is $(4,1,1,2)$, which has full support, but this is not necessarily the case for the terms in $\Phi(x)$: the first term in the second row has weight $(4,0,1,2,1)$, while the others have weight $(4,0,1,1,2)$.
\end{example}

\begin{lemma}\label{lem:Phi-circ-Upsilon}
 We have that $\Phi\circ\Upsilon + \Upsilon\circ\Phi=0$, and therefore  \eqref{eq:def-Phi-divpow-wedge} induces maps
 \[\Phi:\bb{W}_{(A,1^B)}(\kk^n) \lra \bb{W}_{(A-1,1^{B+1})}(\kk^{n+1}).\]
\end{lemma}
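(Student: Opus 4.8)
The plan is to verify the identity $\Phi\circ\Upsilon + \Upsilon\circ\Phi = 0$ by a direct computation on the standard basis, using the explicit formulas $\Upsilon(x) = \sum_i \eta_i(x)\wedge e_i$ and $\Phi(x) = \sum_{1\le t\le s\le n}(-1)^s\,\psi^s(\eta_t(x))\wedge e_{s+1}$, together with the commutation relations \eqref{eq:comp-eta-psi} between the contraction operators $\eta_j$ and the generization/specialization maps, and the obvious commutation of $\eta_j$ with exterior multiplication $-\wedge e_i$ (for appropriate indices). Once the identity is established, the induced maps on Weyl functors are immediate: since $\bb{W}_{(A,1^B)} = \coker\Upsilon$ with $\Upsilon: D^{A+1}\oo\bw^{B-1}\to D^A\oo\bw^B$, the relation $\Upsilon\circ\Phi = -\Phi\circ\Upsilon$ shows that $\Phi$ carries $\operatorname{im}\Upsilon$ into $\operatorname{im}\Upsilon$ (one level up in $n$), hence descends to the quotient.

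The main calculation is to expand both composites on a basis element $x = e^\a\oo e_\b \in (D^{A+1}\oo\bw^{B-1})(\kk^n)$ (for the composite $\Phi\circ\Upsilon$) and on $x\in(D^A\oo\bw^B)(\kk^n)$ (for $\Upsilon\circ\Phi$), and match terms. Concretely, $\Phi(\Upsilon(x)) = \sum_{i}\sum_{1\le t\le s\le n}(-1)^s\,\psi^s\big(\eta_t(\eta_i(x)\wedge e_i)\big)\wedge e_{s+1}$; here one must be careful that $\eta_t$ acts only on the divided-power factor, so $\eta_t(\eta_i(x)\wedge e_i) = (\eta_t\eta_i x)\wedge e_i$, and then $\psi^s$ acts on both factors. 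On the other side, $\Upsilon(\Phi(x)) = \sum_{1\le t\le s\le n+1}$ [wait --- the exterior factor of $\Phi(x)$ lives in $\bw^{B+1}\kk^{n+1}$, so $\Upsilon$ here is the Koszul map in $n+1$ variables] $\sum_{i=1}^{n+1}\eta_i(\Phi(x))\wedge e_i$. Expanding this and applying \eqref{eq:comp-eta-psi} to push the $\eta_i$ past the $\psi^s$ appearing inside $\Phi(x)$ produces terms indexed by pairs of contraction indices together with a generization index; the relation $\eta_i\psi^i = 0$ from \eqref{eq:comp-eta-psi} kills the "diagonal" contributions, and the two remaining families of terms should cancel in pairs after relabeling indices and accounting for the Koszul signs $(-1)^s$ together with the sign picked up when reordering $e_i\wedge e_{s+1}$ versus $e_{s+1}\wedge e_i$.

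I expect the sign bookkeeping in this cancellation to be the main obstacle: one has to track the interaction of the Koszul sign in $\Upsilon$, the explicit sign $(-1)^s$ in $\Phi$, the sign from the commutation $\eta_j\psi_i = \psi_i(\eta_j+\eta_{j+1})$ type relations, and the sign from moving the newly wedged basis vector past the existing ones in the exterior factor, and verify these combine to give an overall minus sign relating the two composites rather than a plus. A clean way to organize this is to first record the behavior of $\eta_t\eta_i$ (these commute as operators on divided powers up to no sign, but the index shift in \eqref{eq:comp-eta-psi} matters when a $\psi$ is interposed) and to split the sum over $(t,s)$ according to whether $t<i$, $t=i$, or $t>i$, matching each piece of $\Phi\circ\Upsilon$ with the corresponding piece of $\Upsilon\circ\Phi$ under the substitution that exchanges the roles of the two contraction indices. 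Since everything is defined over $\bb{Z}$ and compatible with base change, it suffices to carry out this verification once; we will do so on the standard monomial basis of $D^{A+1}(\kk^n)\oo\bw^{B-1}(\kk^n)$.
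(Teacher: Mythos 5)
Your plan is essentially the paper's proof: expand $\Upsilon\circ\Phi(x)$ for $x\in(D^{A+1}\oo\bw^{B-1})(\kk^n)$, push the outer contraction $\eta_i$ past the generization $\psi^s$ via \eqref{eq:comp-eta-psi}, observe that the adjacent case vanishes, regroup the surviving terms, and match them against $\Phi\circ\Upsilon(x)$ up to the sign coming from reordering the wedge factors; the descent to Weyl functors then follows exactly as you say, since $\Upsilon$ is the presentation map of $\bb{W}_{(A,1^B)}$. So the strategy is sound and the sign bookkeeping you worry about does work out, with the overall minus sign produced precisely by $e_{s+1}\wedge e_i=-e_i\wedge e_{s+1}$ after using that $\psi^s(y\wedge e_i)=\psi^s(y)\wedge e_i$ for $i\leq s$ and $=\psi^s(y)\wedge e_{i+1}$ for $i\geq s+1$.

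Two corrections are needed before the computation goes through as written. First, the relation you invoke to kill the ``diagonal'' contributions is misquoted: \eqref{eq:comp-eta-psi} gives $\eta_j\psi^i=0$ only when $j=i+1$, whereas $\eta_i\psi^i=\psi^i\eta_i\neq 0$ in general; the term that actually dies in $\Upsilon\circ\Phi$ is the one with $i=s+1$, via $\eta_{s+1}\psi^s=0$. Correspondingly, the case split that makes the relabeling work is on $i$ versus $s$ (namely $i\leq s$, where $\eta_i\psi^s=\psi^s\eta_i$, versus $i\geq s+2$, where $\eta_i\psi^s=\psi^s\eta_{i-1}$ and you substitute $i\mapsto i+1$), not on $t$ versus $i$: since $\eta_t$ and $\eta_i$ commute on divided powers, no splitting by the two contraction indices is required. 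Second, the specialization relations $\eta_j\psi_i=\psi_i(\eta_j+\eta_{j+1})$, etc., play no role in this lemma (no $\psi_i$ appears in either composite); they are needed only for the later identity $\Phi\circ\partial+\partial\circ\Phi+\Upsilon=0$ of Lemma~\ref{lem:Ups=Phipar+parPhi}. With these adjustments your outline reproduces the paper's argument; also note that no base-change reduction to $\kk=\bb{Z}$ is needed here, as the identity is verified by exact term-by-term cancellation over any $\kk$.
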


\begin{proof} We consider the diagram
\[
\xymatrix{
\left(D^{A+1}\oo\bw^{B-1}\right)(\kk^n) \ar[r]^{\Phi} \ar[d]^{\Upsilon} & \left(D^A\oo\bw^B\right)(\kk^{n+1}) \ar[d]^{\Upsilon} \\
\left(D^A\oo\bw^B\right)(\kk^n) \ar[r]^{\Phi} & \left(D^{A-1}\oo\bw^{B+1}\right)(\kk^{n+1})
}
\]
and consider any element $x\in\left(D^{A+1}\oo\bw^{B-1}\right)(\kk^n)$. We have
\[
\Upsilon\circ\Phi(x) = \sum_{i=1}^{n+1}\sum_{1\leq t\leq s\leq n}(-1)^s \eta_i(\psi^s(\eta_t(x)))\wedge e_{s+1}\wedge e_i
\]
Using that $\eta_{s+1}\psi^s=0$, we can divide the above sum into terms where $i\leq s$ for which $\eta_i\psi^s=\psi^s\eta_i$, and terms where $i\geq s+2$ for which $\eta_i\psi^s=\psi^s\eta_{i-1}$. We get (after making the substitution $i\mapsto i+1$ in the second sum)
\[\sum_{\substack{1\leq t\leq s\leq n \\ 1\leq i\leq s}}(-1)^s \psi^s(\eta_i(\eta_t(x)))\wedge e_{s+1}\wedge e_i  +\sum_{\substack{1\leq t\leq s\leq n \\ s+1\leq i\leq n}}(-1)^s \psi^s(\eta_i(\eta_t(x)))\wedge e_{s+1}\wedge e_{i+1}\]
Using moreover that $\psi^s(y\wedge e_i)$ equals $\psi^s(y)\wedge e_i$ for $i\leq s$ and $\psi^s(y)\wedge e_{i+1}$ for $i\geq s+1$, as well as the fact that $\eta_i$ and $\eta_t$ commute, we can regroup the terms and further rewrite this as
\[-\sum_{\substack{1\leq t\leq s\leq n \\ 1\leq i\leq n}}(-1)^s \psi^s(\eta_t(\eta_i(x)\wedge e_i))\wedge e_{s+1} = - \Phi\circ\Upsilon(x) \]
which proves the desired relation. The conclusion about the induced maps follows from the fact that $\Upsilon$ gives the presentation map for the respective Weyl modules.
\end{proof}

\begin{lemma}\label{lem:aux-form-Ups}
 We have for $x\in \left(D^A\oo\bw^B\right)(\kk^n)$
 \[\Upsilon(x) = \sum_{\substack{1\leq t\leq s\leq i\leq n \\ i\leq s+1}} (-1)^{i+s} \psi_i(\psi^s(\eta_t(x)))\wedge\psi_i(e_{s+1})\]
\end{lemma}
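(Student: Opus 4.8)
The plan is to prove the identity by a direct computation: split the sum on the right according to the two values $i=s$ and $i=s+1$ permitted by the constraint $s\le i\le s+1$, simplify each piece using the simplicial relations, and recognize what remains as a telescoping sum that collapses to $\Upsilon(x)$.

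First I would record the two elementary facts that make the individual terms collapse. From the simplicial identities \eqref{eq:cosimplicial-psi} we have $\psi_s\psi^s=\id$ and $\psi_{s+1}\psi^s=\id$ (the cases $i=j$ and $i=j-1$ of the rule for $\psi_j\psi^i$), so $\psi_i(\psi^s(\eta_t(x)))=\eta_t(x)$ whenever $i\in\{s,s+1\}$; and from the definition \eqref{eq:spec-maps} of the specialization maps we have $\psi_s(e_{s+1})=e_s$ and $\psi_{s+1}(e_{s+1})=e_{s+1}$. Since $(-1)^{i+s}$ equals $+1$ for $i=s$ and $-1$ for $i=s+1$, and since the term $i=s+1$ occurs only when $s+1\le n$, the right-hand side of the asserted formula equals
\[
\sum_{1\le t\le s\le n}\eta_t(x)\wedge e_s\ -\ \sum_{1\le t\le s\le n-1}\eta_t(x)\wedge e_{s+1}.
\]
Both summands (and $\Upsilon(x)$) lie in $\left(D^{A-1}\oo\bw^{B+1}\right)(\kk^n)$, so the identity makes sense term by term.

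Next I would reindex the second sum by $s\mapsto s-1$, turning it into $-\sum_{2\le s\le n}\sum_{1\le t\le s-1}\eta_t(x)\wedge e_s$, and combine it with the first over the common index $s$. With the usual convention that an empty inner sum is $0$, the coefficient of $e_s$ becomes $\sum_{t=1}^{s}\eta_t(x)-\sum_{t=1}^{s-1}\eta_t(x)=\eta_s(x)$ for every $s$, so the whole expression telescopes to $\sum_{s=1}^{n}\eta_s(x)\wedge e_s=\Upsilon(x)$, using the explicit formula for the Koszul presentation map $\Upsilon$ of $\bb{W}_{(A-1,1^{B+1})}$.

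The argument is entirely formal and presents no genuine obstacle; the only point that requires attention is the bookkeeping of index ranges — in particular, that the $i=s+1$ terms are present precisely for $s\le n-1$ — since it is exactly this that produces a clean telescoping rather than a spurious boundary contribution.
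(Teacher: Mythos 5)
Your proof is correct and follows essentially the same route as the paper's: both simplify each term via $\psi_i\psi^s=\mathrm{id}$ and the explicit action of $\psi_i$ on $e_{s+1}$, then cancel the resulting sums. The only (immaterial) difference is that the paper telescopes over $s$ for fixed $t$ using the convention $e_{n+1}=0$, whereas you reindex and collect the coefficient of each $e_s$.
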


\begin{proof} Notice that in the above sum we only allow $i=s$ or $i=s+1$, and the latter occurs if and only if $s<n$. Using \eqref{eq:cosimplicial-psi} we get $\psi_i\psi^s=\text{identity}$, and moreover we have $\psi_i(e_{s+1})=e_s$ if $i=s$ and $\psi_i(e_{s+1})=e_{s+1}$ if $i=s+1$. The sum then simplifies to
\[ \sum_{1\leq t\leq s\leq n} \eta_t(x)\wedge(e_s-e_{s+1}) \]
where we make the convention that $e_{n+1}=0$. Since $\sum_{s=t}^n (e_s-e_{s+1}) = e_t$, we can simplify the expression further to
\[\sum_{1\leq t\leq n} \eta_t(x)\wedge e_t = \Upsilon(x),\]
which gives the desired conclusion.
\end{proof}

\begin{lemma}\label{lem:Ups=Phipar+parPhi}
 We have that $\Phi\circ\partial+\partial\circ\Phi + \Upsilon = 0$.
\end{lemma}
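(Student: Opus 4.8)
The plan is to prove the identity $\Phi\circ\partial + \partial\circ\Phi + \Upsilon = 0$ by direct computation on an element $x\in\left(D^A\oo\bw^B\right)(\kk^n)$, expanding everything into contraction operators $\eta_t$, generization maps $\psi^s$, specialization maps $\psi_i$, and exterior multiplications $-\wedge e_j$, and then matching terms using the simplicial identities \eqref{eq:cosimplicial-psi} together with the compatibility relations \eqref{eq:comp-eta-psi}. The key observation that makes the computation tractable is Lemma~\ref{lem:aux-form-Ups}, which rewrites $\Upsilon$ in a form involving exactly the same kinds of composites ($\psi_i\psi^s\eta_t$) that appear after applying $\partial$ to $\Phi(x)$; this is precisely why that lemma was proved first.

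First I would write out $\partial\circ\Phi(x)$: since $\Phi(x)\in\left(D^{A-1}\oo\bw^{B+1}\right)(\kk^{n+1})$, applying the differential $\partial = \sum_i (-1)^{i-1}\psi_i$ produces a double sum $\sum_i\sum_{t\leq s\leq n}(-1)^{i-1}(-1)^s\psi_i\bigl(\psi^s(\eta_t(x))\wedge e_{s+1}\bigr)$, and I would split the range of $i$ into the cases $i\leq s$, $i=s+1$, and $i\geq s+2$, using the third line of \eqref{eq:cosimplicial-psi} to simplify $\psi_i\psi^s$ in each case (it is the identity when $i=s$ or $i=s+1$, and otherwise reduces the superscript/subscript appropriately), and also tracking how $\psi_i$ interacts with the wedge factor $e_{s+1}$. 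Next I would write out $\Phi\circ\partial(x)$: here $\partial(x) = \sum_{i}(-1)^{i-1}\psi_i(x)$ lives on $\kk^{n-1}$, and applying $\Phi$ gives $\sum_i\sum_{t\leq s\leq n-1}(-1)^{i-1}(-1)^s\psi^s(\eta_t(\psi_i(x)))\wedge e_{s+1}$; now I would push $\eta_t$ and then $\psi^s$ past $\psi_i$ using \eqref{eq:comp-eta-psi} and \eqref{eq:cosimplicial-psi}, again organizing by the relative position of $i$ and the other indices. The bulk of the work is bookkeeping the index shifts and signs, and one must handle carefully the degenerate/boundary terms (e.g.\ the convention $e_{n+1}=0$, the vanishing $\eta_{s+1}\psi^s=0$, and the cases $i=s-1, s, s+1$ where composites collapse to identities).

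After these expansions, the terms of $\partial\Phi(x) + \Phi\partial(x)$ should fall into two groups: "diagonal" terms where two of the moving indices coincide (forcing composites of $\psi$'s to become the identity), which I expect to collapse to exactly $-\Upsilon(x)$ in the form given by Lemma~\ref{lem:aux-form-Ups}, and "off-diagonal" terms which should cancel in pairs between $\partial\Phi$ and $\Phi\partial$ after a change of summation variable — this cancellation mechanism is structurally the same as the one in the proof of Lemma~\ref{lem:Phi-circ-Upsilon}, where $\Upsilon\Phi + \Phi\Upsilon = 0$ was obtained by regrouping. It may be cleanest to present the argument by first establishing the cancellation of all terms in which no two indices collide, and then separately evaluating the surviving collision terms and recognizing them via Lemma~\ref{lem:aux-form-Ups}.

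The main obstacle will be the sheer combinatorial care needed to get every sign and index-range right: there are three maps ($\psi_i$ from $\partial$, $\psi^s$ and $\eta_t$ from $\Phi$) whose indices interact pairwise, so the case analysis branches substantially, and the wedge factor $e_{s+1}$ also shifts under $\psi_i$, contributing additional sign subtleties. A useful simplification is that, since all constructions are defined over $\bb{Z}$ and compatible with base change (as noted in the proof strategy), it suffices to verify the identity for $\kk=\bb{Z}$, or even to check it on the standard basis elements $e^{\a}\oo e_{\b}$, which lets one reason concretely with monomials rather than abstractly with natural transformations. I would also note that, by the already-proven Lemma~\ref{lem:Phi-circ-Upsilon}, the expression $\Phi\circ\partial + \partial\circ\Phi + \Upsilon$ descends to a well-defined map on the Weyl module quotients, which provides a consistency check on the final answer.
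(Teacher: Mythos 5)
Your plan follows essentially the same route as the paper's proof: expand $\Phi\circ\partial$ and $\partial\circ\Phi$ on a monomial, commute $\eta_t$, $\psi^s$, $\psi_i$ using \eqref{eq:comp-eta-psi} and \eqref{eq:cosimplicial-psi}, cancel the collision terms $i=s,s+1$ of $\partial\Phi(x)$ against $\Upsilon(x)$ in the form of Lemma~\ref{lem:aux-form-Ups}, and match the remaining terms of $\partial\Phi$ against those of $\Phi\partial$ after reindexing (the paper's $L_1+R_1$ versus $L_2+R_2$). The only bookkeeping your sketch glosses over is the extra $t=s+1$ terms produced by the relation $\eta_t\psi_t=\psi_t(\eta_t+\eta_{t+1})$, which appear symmetrically on both sides and cancel, exactly as in the paper.
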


\begin{proof}
We consider the diagram
\[
\xymatrix{
\left(D^{A}\oo\bw^{B}\right)(\kk^n) \ar[r]^{\Phi} \ar[d]^{\partial}  \ar[dr]^{\Upsilon} & \left(D^{A-1}\oo\bw^{B+1}\right)(\kk^{n+1}) \ar[d]^{\partial} \\
\left(D^A\oo\bw^B\right)(\kk^{n-1}) \ar[r]^{\Phi} & \left(D^{A-1}\oo\bw^{B+1}\right)(\kk^{n})
}
\]
and consider any element $x\in\left(D^{A}\oo\bw^{B}\right)(\kk^n)$. We have
\begin{equation}\label{eq:Phi-par-x}
 \Phi(\partial(x)) = \sum_{1\leq t\leq s\leq n-1}\sum_{i=1}^{n-1}(-1)^{i+s-1}\psi^s(\eta_t(\psi_i(x)))\wedge e_{s+1}.
\end{equation}
We can swap $\eta_\bullet$ with $\psi_\bullet$ using \eqref{eq:comp-eta-psi}, and divide \eqref{eq:Phi-par-x} into two parts $L_1$, $R_1$, where
\[ L_1 = \sum_{\substack{1\leq t\leq s\leq n-1 \\ 1\leq i\leq t}}(-1)^{i+s-1}\psi^s(\psi_i(\eta_{t+1}(x)))\wedge e_{s+1},\qquad 
  R_1 = \sum_{\substack{1\leq t\leq s\leq n-1 \\ t\leq i\leq n-1}}(-1)^{i+s-1}\psi^s(\psi_i(\eta_{t}(x)))\wedge e_{s+1}.
\]
noting that the terms in \eqref{eq:Phi-par-x} that involve $\eta_t\psi_t$ (where $i=t$) contribute $\psi_t\eta_{t+1}$ to the left sum, and $\psi_t\eta_{t}$ to the right sum. We make the substitution $t\mapsto t-1$ in $L_1$, and then for $1\leq t\leq s\leq n-1$ we regroup the terms in $L_1+R_1$ by allowing $1\leq i\leq n-1$. We are left with additional terms in $L_1$ corresponding to $t=s+1$, hence:
\[ L_1+R_1 = \sum_{\substack{1\leq t\leq s\leq n-1 \\ 1\leq i\leq n-1}}(-1)^{i+s-1}\psi^s(\psi_i(\eta_t(x)))\wedge e_{s+1} + \sum_{1\leq i\leq s\leq n-1}(-1)^{i+s-1}\psi^s(\psi_i(\eta_{s+1}(x)))\wedge e_{s+1}.\]
Using Lemma~\ref{lem:aux-form-Ups}, we can cancel out $\Upsilon(x)$ with the terms where $i=s,s+1$ in the expression below
\[
 \Upsilon(x) + \partial(\Phi(x)) = \Upsilon(x) + \sum_{i=1}^{n}\sum_{1\leq t\leq s\leq n-1}(-1)^{i+s-1}\psi_i(\psi^s(\eta_t(x)))\wedge \psi_i(e_{s+1})
\]
We can further swap $\psi_\bullet$ with $\psi^\bullet$ using \eqref{eq:cosimplicial-psi}, and divide the result into two parts $L_2$, $R_2$, where
\[ L_2 = \sum_{\substack{1\leq t\leq s\leq n \\ 1\leq i\leq s-1}}(-1)^{i+s-1}\psi^{s-1}(\psi_i(\eta_t(x)))\wedge e_s,\qquad 
  R_2 = \sum_{\substack{1\leq t\leq s\leq n \\ s+2\leq i\leq n}}(-1)^{i+s-1}\psi^s(\psi_{i-1}(\eta_{t}(x)))\wedge e_{s+1}.
\]
We make the substitutions $s\mapsto s+1$ in $L_2$ and $i\mapsto i+1$ in $R_2$, and then for $1\leq t\leq s\leq n-1$ we regroup the terms in $L_2+R_2$ by allowing $1\leq i\leq n-1$. We are again left with additional terms in $L_2$ corresponding to $t=s+1$:
\[ L_2+R_2 = \sum_{\substack{1\leq t\leq s\leq n-1 \\ 1\leq i\leq n-1}}(-1)^{i+s}\psi^s(\psi_i(\eta_t(x)))\wedge e_{s+1} + \sum_{1\leq i\leq s\leq n-1}(-1)^{i+s}\psi^s(\psi_i(\eta_{s+1}(x)))\wedge e_{s+1}.\]
Notice that the formula for $L_2+R_2$ differs from $L_1+R_1$ by a factor of $(-1)$, proving the desired relation.
\end{proof}

\begin{example}\label{ex:Phidel+delPhi}
 Consider $n=3$, $A=4$, $B=2$, and $x=e_1^{(3)}e_2^{(1)} \oo e_1\wedge e_3$. Similarly to Example~\ref{ex:Phi-div-wedge} we get
 \[ \Phi(x) = -2e_1^{(2)}e_2^{(1)} \oo e_1\wedge e_3\wedge e_4 + e_1^{(2)}e_3^{(1)} \oo e_1\wedge e_2\wedge e_4 - 2e_1^{(3)} \oo e_1\wedge e_3\wedge e_4,\]
 and therefore
 \[ \partial(\Phi(x)) = \left(-6e_1^{(3)} + e_1^{(2)}e_2^{(1)} + e_1^{(2)}e_3^{(1)}\right) \oo e_1\wedge e_2\wedge e_3.\]
 On the other hand we have 
 \[ \partial(x) = \left(4e_1^{(4)} - e_1^{(3)}e_2^{(1)}\right) \oo e_1\wedge e_2,\]
 and one can check that after applying $\Phi$ we get
 \[ \Phi(\partial(x)) = \left(7e_1^{(3)} - e_1^{(2)}e_2^{(1)} - e_1^{(2)}e_3^{(1)}\right) \oo e_1\wedge e_2\wedge e_3.\]
 This shows that $(\Phi\circ\partial+\partial\circ\Phi)(x) = e_1^{(3)} \oo e_1\wedge e_2\wedge e_3 = -\Upsilon(x)$.
\end{example}

As seen in Examples~\ref{ex:Phi-div-wedge},~\ref{ex:Phidel+delPhi}, $\Phi$ does not restrict to a map between weight spaces of full support. Nevertheless, the next result shows that $\Phi$ is compatible with the filtration by the extended complexes $\hat{\mc{F}}^a_{\bullet}$.

\begin{proposition}\label{prop:Phi-map-Fhat}
 The operator $\Phi$ induces morphisms of complexes
 \[ \Phi:  \hat{\mc{F}}^a_{\bullet}\left(\bb{W}_{(A,1^B)}\right) \lra \hat{\mc{F}}^{a-1}_{\bullet+1}\left(\bb{W}_{(A-1,1^{B+1})}\right)  \]
 if we make the convention that $\pd_{\bullet+1} = -\pd_{\bullet}$.
\end{proposition}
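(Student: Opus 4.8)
The plan is to verify two things separately: (i) that $\Phi$ carries the indicated subcomplex into the target, which is a matter of tracking torus weights; and (ii) that $\Phi$ commutes with the differentials up to the stated sign, which will be immediate from Lemma~\ref{lem:Ups=Phipar+parPhi}. I stress at the outset that $\Phi$ genuinely fails to preserve the full-support subcomplexes $\mc{F}^a_\bullet$ --- Examples~\ref{ex:Phi-div-wedge} and~\ref{ex:Phidel+delPhi} produce output terms of weights such as $(4,0,1,2,1)$ and $(4,0,1,1,2)$ --- and it is precisely the relaxation built into $\hat{\mc{F}}^a_\bullet$, where only the first and last coordinates of a weight are required to be positive, that makes the statement correct.

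For (i), observe first that $\Phi$ on Weyl modules is induced, via Lemma~\ref{lem:Phi-circ-Upsilon}, from $\Phi$ on $(D^A\oo\bw^B)(\kk^\bullet)$, and the quotient maps onto the Weyl modules are torus-equivariant and surjective on each weight space; hence it suffices to track the weight of each summand of $\Phi(x)$ for a monomial $x = e^{\a}\oo e_{\b}\in(D^A\oo\bw^B)(\kk^n)$ of weight $\ul{d}=(d_1,\dots,d_n)$ with $d_1\geq a$ and $d_n>0$. In the formula $\Phi(x)=\sum_{1\leq t\leq s\leq n}(-1)^s\psi^s(\eta_t(x))\wedge e_{s+1}$, the $(t,s)$-summand vanishes if $\a_t=0$, and otherwise is weight-homogeneous: $\eta_t$ lowers $d_t$ by one and stays in $\bb{Z}^n_{\geq 0}$; the generization $\psi^s$ inserts a $0$ as the new coordinate in position $s+1$, shifting the old coordinates $s+1,\dots,n$ to positions $s+2,\dots,n+1$; and $-\wedge e_{s+1}$ raises that new coordinate to $1$. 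The resulting weight $\ul{e}\in\bb{Z}^{n+1}_{\geq 0}$ therefore has first coordinate equal to $d_1-1\geq a-1$ when $t=1$ and equal to $d_1\geq a$ when $t>1$, and last coordinate equal to $1$ when $s=n$; when $s<n$ we have $t\leq s<n$, so $\eta_t$ leaves position $n$ untouched, $\psi^s$ carries it to position $n+1$, and $-\wedge e_{s+1}$ does not affect position $n+1$, giving $e_{n+1}=d_n>0$. Every summand of $\Phi(x)$ thus lands in a weight space with $e_1\geq a-1$ and $e_{n+1}>0$, i.e.\ in $\hat{\mc{F}}^{a-1}_{n+1}(\bb{W}_{(A-1,1^{B+1})})$, which is exactly the required containment.

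For (ii), recall that $\Upsilon\colon(D^A\oo\bw^B)(\kk^\bullet)\to(D^{A-1}\oo\bw^{B+1})(\kk^\bullet)$ is the presentation map with cokernel $\bb{W}_{(A-1,1^{B+1})}$. The specialization differential $\partial$ descends to the Weyl modules since it is assembled from the functorial maps $\psi_i$, and $\Phi$ descends by Lemma~\ref{lem:Phi-circ-Upsilon}; post-composing the identity $\Phi\circ\partial+\partial\circ\Phi+\Upsilon=0$ of Lemma~\ref{lem:Ups=Phipar+parPhi} with the quotient onto $\bb{W}_{(A-1,1^{B+1})}(\kk^\bullet)$ kills the $\Upsilon$-term, leaving $\partial\circ\Phi+\Phi\circ\partial=0$ as maps $\bb{W}_{(A,1^B)}(\kk^\bullet)\to\bb{W}_{(A-1,1^{B+1})}(\kk^\bullet)$. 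With the convention $\partial_{\bullet+1}=-\partial_\bullet$ on the target complex, the differential of $\hat{\mc{F}}^{a-1}_{\bullet+1}$ in homological degree $n$ is $-\partial$, so the relation $-\partial\circ\Phi=\Phi\circ\partial$ is precisely the condition that the squares defining a morphism of complexes $\hat{\mc{F}}^a_\bullet(\bb{W}_{(A,1^B)})\to\hat{\mc{F}}^{a-1}_{\bullet+1}(\bb{W}_{(A-1,1^{B+1})})$ commute. The only subtle point is keeping careful track of where $\psi^s$ inserts its new coordinate relative to the summation constraint $t\leq s$, since this is exactly what forces the last coordinate of the output to stay positive; beyond this bookkeeping there is no genuine obstacle.
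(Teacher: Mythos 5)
Your proposal is correct and follows essentially the same route as the paper: track the weight of each $(t,s)$-summand of $\Phi(x)$ (first coordinate drops by at most one, last coordinate stays positive, exactly because $t\leq s$), then deduce the anticommutation with the differential from Lemma~\ref{lem:Ups=Phipar+parPhi} after killing the $\Upsilon$-term in the Weyl module quotient. The paper records the weight bookkeeping compactly as $\ul{d}'=\psi^s(\ul{d})-\epsilon_t+\epsilon_{s+1}$, but the content of your verification is identical.
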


\begin{proof}
 We write $\mc{P}=\bb{W}_{(A,1^B)}$ and $\mc{P}'=\bb{W}_{(A-1,1^{B+1})}$ and recall from Lemma~\ref{lem:Phi-circ-Upsilon} that $\Phi$ induces a well-defined map $\mc{P}(\kk^n)\lra\mc{P}'(\kk^{n+1})$. If $\epsilon_1=(1,0,\cdots)$, $\epsilon_2,\cdots$ are the standard unit vectors in $\bb{Z}^n$ (and $\bb{Z}^{n+1}$), then $\Phi$ takes $\mc{P}_{\ul{d}}$ to a sum of weight spaces $\mc{P}'_{\ul{d}'}$, where each $\ul{d}'$ has the form
 \[\ul{d}' = \psi^s(\ul{d}) - \epsilon_t + \epsilon_{s+1}\quad\text{ for some }1\leq t\leq s\leq n.\]
 Notice that if $d_1\geq a$ then $d'_1\geq d_1-1 \geq a-1$. Also if $d_n\neq 0$ then $d'_{n+1}\neq 0$: if $s=n$ then $d'_{n+1}=1$, while for $1\leq s\leq n-1$ we have $d'_{n+1}=d_n$. This shows that $\Phi$ defines maps $\hat{\mc{F}}^a_{n}(\mc{P})$ to $\hat{\mc{F}}^{a-1}_{n+1}(\mc{P}')$ for all $n$, so it remains to verify that $\Phi$ commutes with the differentials in $\hat{\mc{F}}^a_{\bullet}(\mc{P})$ and $\hat{\mc{F}}^{a-1}_{\bullet+1}(\mc{P}')$, which follows from Lemma~\ref{lem:Ups=Phipar+parPhi}.
\end{proof}

Combining Proposition~\ref{prop:Phi-map-Fhat} with Lemma~\ref{lem:Fdegenerate} concludes the construction of the morphism of complexes \eqref{eq:Phi-on-Fa}.

\begin{corollary}\label{cor:Phi-on-Fa}
 The operator $\Phi$ induces morphisms of complexes
  \[ \Phi:  \mc{F}^a_{\bullet}\left(\bb{W}_{(A,1^B)}\right) \lra \mc{F}^{a-1}_{\bullet+1}\left(\bb{W}_{(A-1,1^{B+1})}\right)  \]
obtained as the composition 
\[\mc{F}^a_{\bullet}\left(\bb{W}_{(A,1^B)}\right) \hookrightarrow  \hat{\mc{F}}^a_{\bullet}\left(\bb{W}_{(A,1^B)}\right) \overset{\Phi}{\lra} \hat{\mc{F}}^{a-1}_{\bullet+1}\left(\bb{W}_{(A-1,1^{B+1})}\right) \onto \mc{F}^{a-1}_{\bullet+1}\left(\bb{W}_{(A-1,1^{B+1})}\right).\]
\end{corollary}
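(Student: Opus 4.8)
The plan is to obtain the map purely formally by composing three morphisms of complexes that are already available. First I would recall that, by the discussion preceding Lemma~\ref{lem:Fdegenerate}, the full-support complex $\mc{F}^a_{\bullet}\left(\bb{W}_{(A,1^B)}\right)$ sits inside the extended complex $\hat{\mc{F}}^a_{\bullet}\left(\bb{W}_{(A,1^B)}\right)$ as a subcomplex, so the inclusion $\iota:\mc{F}^a_{\bullet}\hookrightarrow\hat{\mc{F}}^a_{\bullet}$ is a morphism of complexes. Next, Proposition~\ref{prop:Phi-map-Fhat} provides the morphism of complexes $\Phi:\hat{\mc{F}}^a_{\bullet}\left(\bb{W}_{(A,1^B)}\right)\to\hat{\mc{F}}^{a-1}_{\bullet+1}\left(\bb{W}_{(A-1,1^{B+1})}\right)$, with the sign convention $\pd_{\bullet+1}=-\pd_{\bullet}$ on the target; this is exactly the content encoded in Lemma~\ref{lem:Ups=Phipar+parPhi} (the identity $\Phi\pd+\pd\Phi+\Upsilon=0$, with $\Upsilon$ acting as zero on the Weyl quotient). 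Finally, Lemma~\ref{lem:Fdegenerate} exhibits a direct-sum decomposition of complexes $\hat{\mc{F}}^{a-1}_{\bullet}=\mc{F}^{a-1}_{\bullet}\oplus\mc{D}^{a-1}_{\bullet}$ in which $\mc{D}^{a-1}_{\bullet}$ is a subcomplex, so the projection $\pi:\hat{\mc{F}}^{a-1}_{\bullet+1}\onto\mc{F}^{a-1}_{\bullet+1}$ onto the first summand is a morphism of complexes. The composite $\pi\circ\Phi\circ\iota$ is then the asserted morphism of complexes.

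The step worth spelling out is \emph{why} the detour through the extended complexes is forced on us. The operator $\Phi$ of \eqref{eq:def-Phi-divpow-wedge} does \emph{not} carry a weight space $\mc{P}_{\ul{d}}$ of full support into a sum of full-support weight spaces: as Example~\ref{ex:Phi-div-wedge} illustrates, starting from $x$ of full-support weight $(4,1,1,2)$ one produces terms such as $e_1^{(3)}e_4^{(2)}\oo e_1\w e_3\w e_5$ of weight $(4,0,1,2,1)$. What the proof of Proposition~\ref{prop:Phi-map-Fhat} verifies is the weaker statement that such terms can only drop the second coordinate (and smaller ones) to zero, keeping the first and last coordinates positive and keeping $d_1\geq a-1$ — i.e. they land in $\hat{\mc{F}}^{a-1}_{\bullet+1}$. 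The offending non-full-support terms are then precisely those lying in the acyclic summand $\mc{D}^{a-1}_{\bullet}$ of Lemma~\ref{lem:Fdegenerate}, and it is exactly these that the projection $\pi$ discards.

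Consequently there is no real obstacle remaining at this point: the corollary is a formal consequence of Proposition~\ref{prop:Phi-map-Fhat} and Lemma~\ref{lem:Fdegenerate}, the genuine work having been done in the verification of the Koszul-type identities of Lemmas~\ref{lem:Phi-circ-Upsilon} and~\ref{lem:Ups=Phipar+parPhi}. I would close by noting, as in the paragraph preceding the statement, that since all of $\iota$, $\Phi$ and $\pi$ are defined over $\bb{Z}$ and commute with base change, the resulting $\Phi$ on $\mc{F}^a_{\bullet}$ is likewise defined over $\bb{Z}$ and compatible with base change, which is what will be needed in the sequel.
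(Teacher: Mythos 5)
Your argument is correct and is essentially the paper's own proof: the paper likewise obtains the map as the composite of the inclusion, the morphism of Proposition~\ref{prop:Phi-map-Fhat} on the extended complexes, and the projection coming from the direct-sum decomposition $\hat{\mc{F}}^{a-1}_{\bullet}=\mc{F}^{a-1}_{\bullet}\oplus\mc{D}^{a-1}_{\bullet}$ of Lemma~\ref{lem:Fdegenerate}. (Only a cosmetic slip in your gloss: membership in $\hat{\mc{F}}^{a-1}_{\bullet+1}$ requires $d_1\geq a-1$ and the last coordinate positive, not that the first coordinate stay positive, which can fail when $a=1$; this does not affect the argument.)
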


\begin{example}\label{ex:Phi-on-Fa}
One can think of the evaluation of $\Phi(x)$ in Corollary~\ref{cor:Phi-on-Fa} as applying the formula \eqref{eq:def-Phi-divpow-wedge}, and then ``ignoring" all the summands that do not have full support. With the notation in Example~\ref{ex:Phidel+delPhi}, we take $a=4$ and interpret $x$ as an element of $\mc{F}^a_n(\bb{W}_{(A,1^B)})=\mc{F}^4_3(\bb{W}_{(4,1^2)})$. We get
\[ 
\Yvcentermath1
\Phi\left(\young(1112,1,3)\right) = (-2)\cdot\young(112,1,3,4) + \young(113,1,2,4)
\]
and we note that the tableau corresponding to $2e_1^{(3)} \oo e_1\wedge e_3\wedge e_4$ was ignored, since the corresponding weight $(4,0,1,1)$ did not have full support. Using the straightening relations we get
\[ 
\Yvcentermath1
\young(1112,1,3) = -\young(1111,2,3),\quad \young(112,1,3,4)=-\young(111,2,3,4),\quad \young(113,1,2,4)=-\young(111,3,2,4)=\young(111,2,3,4),
\]
hence the above equality can be rewritten more simply as
\[
\Yvcentermath1
\Phi\left(\young(1111,2,3)\right) = 3 \cdot \young(111,2,3,4)
\]
The same conclusion arises from applying the formula \eqref{eq:def-Phi-divpow-wedge}:
\[ \Phi(e_1^{(4)}\oo e_2\wedge e_3) =  e_1^{(3)}\oo e_2\wedge e_3 \wedge e_4 - e_1^{(3)}\oo e_2\wedge e_4 \wedge e_3 + e_1^{(3)}\oo e_3\wedge e_4 \wedge e_2 = 3\cdot e_1^{(3)}\oo e_2\wedge e_3 \wedge e_4,\]
but notice that this time all elements in the image have full support, hence none are ignored.
\end{example}

\subsection{Divided powers of $\Phi$}
\label{subsec:div-pow-Phi}

The next goal is to define the maps \eqref{eq:def-PhiB-pre}, verify that they satisfy \eqref{eq:PhiB-div-power}, and deduce that $\Phi^{[B]}$ are morphisms of complexes. We write $[N]=\{1,\cdots,N\}$, consider ordered set partitions of $[N]$, denoted $I_{\bullet}=(I_1,\cdots,I_n)\vdash[N]$ and given by
\[ [N] = I_1 \sqcup \cdots \sqcup I_n\text{ with }I_k\neq\emptyset,\text{ and we let }i_k = \min(I_k)\text{ for }k=1,\cdots,n.\]
For a tuple $\ul{d}=(d_1,\cdots,d_n)\in\bb{Z}^n_{>0}$ we consider
\[ \mf{Par}(\ul{d};N) = \{ (I_1,\cdots,I_n)\vdash[N] : i_1\leq \cdots \leq i_n\text{ and } |I_k|\leq d_k\text{ for all }k=1,\cdots,n\}.\]
Given a tuple $\ul{d}$ and a partition $I_{\bullet}\in\mf{Par}(\ul{d};N)$, we define $\a=\a(\ul{d};I_{\bullet})$, $\b=\b(\ul{d};I_{\bullet})$ via
\begin{equation}\label{eq:defa-defb}
 \a = (d_1+1-|I_1|,\cdots,d_n+1-|I_n|) \in \bb{Z}^n_{>0},\qquad \beta=\{\beta_1<\cdots<\beta_{N-n}\} = [N]\setminus\{i_1,\cdots,i_n\},
\end{equation}
and let
\begin{equation}\label{eq:def-sgnI}
 \sgn(\ul{d};I_{\bullet}) = (-1)^{(\b_1-1)+\cdots+(\b_{N-n}-1)},\quad m(\ul{d};I_{\bullet}) = e_{i_1}^{(\a_1)}\cdots e_{i_n}^{(\a_n)} \oo e_{\b}.
\end{equation}
Using this notation, we let
\begin{equation}\label{eq:def-PhiB}
 \Phi^{[B]}(e^{\ul{d}}) = \sum_{I_{\bullet}\in\mf{Par}(\ul{d};n+B)} \sgn(\ul{d};I_{\bullet}) \cdot m(\ul{d};I_{\bullet})
\end{equation}
and proceed to verifying the identity \eqref{eq:PhiB-div-power} after some examples.

\begin{example}\label{ex:B=1}
 If $B=1$, every partition $I_{\bullet}\in \mf{Par}(\ul{d};n+1)$ contains a unique set $I_k$ of size $2$ (where $d_k\geq 2$), hence it is uniquely characterized by a pair $(k,s)$, where
 \[ 1\leq k\leq s\leq n,\text{ and }I_k=\{k,s+1\},\ d_k\geq 2.\]
 We have $I_j = \{\psi^s(j)\}$ for $j\neq k$. For each such $I_{\bullet}$ we get
 \[ \a(\ul{d};I_{\bullet})=(d_1,\cdots,d_k-1,\cdots,d_n),\quad\beta(\ul{d};I_{\bullet})=\{s+1\},\quad \sgn(\ul{d};I_{\bullet})=(-1)^s,\]
 and therefore we can identify
 \[ m(\ul{d};I_{\bullet}) = e_1^{(d_1)}\cdots e_k^{(d_k-1)}\cdots e_{s}^{(d_s)} e_{s+2}^{(d_{s+1})}\cdots e_{n+1}^{(d_n)} \oo e_{s+1} = \psi^s(\eta_k(e^{\ul{d}})) \oo e_{s+1}.\]
 It follows that 
 \[ \Phi^{[1]}(e^{\ul{d}}) = \sum_{\substack{1\leq k\leq s\leq n \\ d_k\geq 2}} \psi^s(\eta_k(e^{\ul{d}})) \oo e_{s+1} = \sum_{1\leq t\leq s\leq n}(-1)^s\psi^s(\eta_t(e^{\ul{d}})) \oo e_{s+1} = \Phi(e^{\ul{d}}),\]
 where the potentially larger sum on the right differs by terms where $t=k$ and $d_k=1$, for which the corresponding term $\psi^s(\eta_k(e^{\ul{d}})) \oo e_{s+1}$ has weight $\ul{d}'\in\bb{Z}^{n+1}$ without full support (namely $d'_k=0$), and can therefore be ignored as explained in Example~\ref{ex:Phi-on-Fa}.
\end{example}

\begin{example}\label{ex:PhiB-concrete}
 For a more concrete example, we take $n=2$, $\ul{d}=(3,1)$, $B=2$, and compute as explained in the previous section 
 \[ 
 \Yvcentermath1
 \Phi(\young(1113)) = \young(112,3)-\young(113,2),\quad\text{ and } (\Phi\circ\Phi)(\young(1113)) = -2\cdot\young(12,3,4)+2\cdot\young(13,2,4)-2\cdot\young(14,2,3).
 \]
The new invariants we are considering in this section are recorded below.
\[ \mf{Par}(\ul{d};n+B) = \{(\{1,3,4\},\{2\}), (\{1,2,4\},\{3\}), (\{1,2,3\},\{4\})\},\]
 \begin{center}
\renewcommand{\arraystretch}{2} 
\begin{tabular}{>{\centering\arraybackslash}m{2.5cm}|>{\centering\arraybackslash}m{2.5cm}|>{\centering\arraybackslash}m{2.5cm}|>{\centering\arraybackslash}m{2.5cm}|>{\centering\arraybackslash}m{2.5cm}}
\textbf{$I_{\bullet}$} & \textbf{$\a(\ul{d};I_{\bullet})$} & \textbf{$\b(\ul{d};I_{\bullet})$} & \textbf{$\sgn(\ul{d};I_{\bullet})$} & \textbf{$m(\ul{d};I_{\bullet})$} \\
\hline
\{1,3,4\},\{2\} & (1,1) & \{3,4\} & -1 & $e_1^{(1)}e_2^{(1)} \oo e_3\wedge e_4$ \\
\hline
\{1,2,4\},\{3\} & (1,1) & \{2,4\} & 1 & $e_1^{(1)}e_3^{(1)} \oo e_2\wedge e_4$ \\
\hline
\{1,2,3\},\{4\} & (1,1) & \{2,3\} & -1 & $e_1^{(1)}e_4^{(1)} \oo e_2\wedge e_3$ \\
\end{tabular}
\end{center}
Based on \eqref{eq:def-PhiB} we obtain
 \[ 
 \Yvcentermath1
 \Phi^{[2]}(\young(1113)) = -\young(12,3,4)+\young(13,2,4)-\young(14,2,3),
 \]
 hence $\Phi\circ\Phi = 2\cdot\Phi^{[2]}$ as intended.
\end{example}

For a pair $(k,s)$ satisfying $|I_k| < d_k$ and $i_k\leq s\leq N$, we define
\begin{equation}\label{eq:def-Sigma-js-I}
 \Sigma_{k,s}(I_1,\cdots,I_n) = (\psi^s(I_1),\cdots,\psi^s(I_k) \cup \{s+1\},\cdots,\psi^s(I_n)) \in \mf{Par}(\ul{d};N+1) .
\end{equation}

\begin{lemma}\label{lem:multipl-SigmaI}
 Each $J_{\bullet}=(J_1,\cdots,J_n)\in \mf{Par}(\ul{d};N+1)$ can be expressed in exactly $(N+1-n)$ ways as $\Sigma_{k,s}(I_\bullet)$ for some $k,s$ and $I_{\bullet}\in \mf{Par}(\ul{d};N)$ as above.
\end{lemma}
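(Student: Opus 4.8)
The plan is to fix $J_{\bullet}=(J_1,\dots,J_n)\in\mf{Par}(\ul{d};N+1)$ and set up an explicit bijection between the ways of writing $J_{\bullet}=\Sigma_{k,s}(I_{\bullet})$ and a subset of $\{2,\dots,N+1\}$, which we then count. The starting observation is that since $\psi^s$ maps $[N]$ bijectively onto $[N+1]\setminus\{s+1\}$, in any such expression the element $s+1$ lies in exactly one block of $J_{\bullet}$, namely $J_k$. Hence an admissible pair $(k,s)$ is the same datum as an element $c:=s+1\in\{2,\dots,N+1\}$, with $k=k(c)$ forced to be the index of the block of $J_{\bullet}$ containing $c$. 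First I would record that, once $c$ (hence $s$ and $k$) is chosen, the only possible $I_{\bullet}$ is obtained by applying the strictly increasing bijection $(\psi^s)^{-1}\colon[N+1]\setminus\{c\}\to[N]$ to the tuple $(J_1,\dots,J_k\setminus\{c\},\dots,J_n)$, and conversely that $\Sigma_{k(c),\,c-1}$ applied to this tuple returns $J_{\bullet}$ (using $c\notin J_j$ for $j\neq k$ and $c\notin J_k\setminus\{c\}$). So the expressions of $J_{\bullet}$ are in bijection with the set of $c$ for which this reconstructed $I_{\bullet}$ is admissible.

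Next I would determine which $c$ work. The size conditions are automatic: $|I_j|=|J_j|\le d_j$ for $j\neq k$, while $|I_k|=|J_k|-1$ is both $\le d_k$ and $<d_k$, the latter being exactly what is needed to form $\Sigma_{k,s}$. The remaining conditions ($I_k\neq\emptyset$, the bound $i_k\le s$, and $i_1<\dots<i_n$) all hinge on whether $c=\min J_k$ or not. Writing $j_1<\dots<j_n$ for the block minima of $J_{\bullet}$ (distinct since the blocks are disjoint, increasing by definition of $\mf{Par}$, with $j_1=1$), I would argue: if $c\neq\min J_k$ then $|J_k|\ge 2$ (so $I_k\neq\emptyset$), and $\min J_k<c=s+1$, so $\min J_k\le s$ and $\min(J_k\setminus\{c\})=j_k$; hence $\min I_j=(\psi^s)^{-1}(j_j)$ for \emph{every} $j$ (for $j\ne k$ this is immediate). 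Since $(\psi^s)^{-1}$ is strictly increasing and restricts to the identity on $\{1,\dots,s\}$, this gives $i_k=\min J_k\le s$ and $i_1<\dots<i_n$, so $I_{\bullet}$ is admissible for $(k,s)$. Conversely, if $c=\min J_k$ then either $J_k=\{c\}$, forcing $I_k=\emptyset$, or $|J_k|\ge 2$ and $\min(J_k\setminus\{c\})\ge s+2$, forcing $i_k=\min(J_k\setminus\{c\})-1\ge s+1>s$; either way $(k,s)$ is not admissible.

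It then remains to count the admissible $c\in\{2,\dots,N+1\}$, namely those that are not the minimum of their block. Among the $n$ block minima $j_1=1<j_2<\dots<j_n$, exactly $n-1$ lie in $\{2,\dots,N+1\}$, so the number of admissible $c$ is $N-(n-1)=N+1-n$, as claimed.

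I expect this to be essentially a careful bookkeeping argument with no real obstruction. The only delicate point is the interplay between the relabeling $\psi^s$ and the ordering and minimality conventions built into $\mf{Par}$; it is worth isolating as a preliminary remark that $\psi^s$ and $(\psi^s)^{-1}$ are order-preserving and that deleting a \emph{non}-minimal element from a block does not move its minimum — this is precisely the fact that packages the three conditions "$I_k\neq\emptyset$", "$i_k\le s$" and "$i_1<\dots<i_n$" into the single requirement $c\neq\min J_k$, and keeps the case analysis short.
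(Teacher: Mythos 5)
Your proof is correct and follows essentially the same route as the paper: you parametrize the expressions $J_{\bullet}=\Sigma_{k,s}(I_{\bullet})$ by the non-minimal elements $s+1$ of the blocks of $J_{\bullet}$, recover $I_{\bullet}$ uniquely by the inverse relabeling, and count the $N+1-n$ non-minimal elements. The only difference is presentational: you verify explicitly that every non-minimal $c$ yields an admissible triple (size bounds, $I_k\neq\emptyset$, $i_k\leq s$, ordering of minima), a check the paper's proof leaves implicit.
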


\begin{proof} We write $j_k = \min(J_k)$ for $k=1,\cdots,n$, and assume that $J_{\bullet} = \Sigma_{k,s}(I_{\bullet})$. It is clear that $i_k = \psi^s(i_k) = j_k$, since $s\geq i_k$ and $\psi^s$ is non-decreasing. Moreover, $(s+1)$ is a non-minimal element in a unique $J_k$. In particular $J_{\bullet}$ and $s$ determines $k$, and we can recover $I_{\bullet}$ as
\begin{equation}\label{eq:IfromJ}
 I_{\bullet} = \left(\psi_s(J_1),\cdots,\psi_s(J_k\setminus\{s+1\}),\cdots,\psi_s(J_n)\right).
\end{equation}
Every element of $[N+1]$ which is not minimal in any of the sets $J_k$ can be expressed uniquely as $s+1$ for $1\leq s\leq N$. Each such $s$ determines a unique index $k$ such that $(s+1)\in J_k$ and a unique partition $I_{\bullet}$ as in \eqref{eq:IfromJ}. Since there are $(N+1-n)$ elements in $[N+1]$ which are not minimal in any of $J_k$, the conclusion follows.
\end{proof}

\begin{example}\label{ex:Sig-ks-I}
 Suppose that $n=3$ and $\ul{d}$ satisfies $d_1\geq 1$, $d_2\geq 3$, $d_3\geq 2$. If we let
 \[ I_{\bullet} = (\{1\},\{2,3\},\{4,5\}),\quad k=2,\quad s=4,\text{ then we get}\quad \Sigma_{k,s}(I_{\bullet}) = (\{1\},\{2,3,5\},\{4,6\}) =: J_{\bullet}\]
 With the notation in Lemma~\ref{lem:multipl-SigmaI} we have $N=5$, so there should be $(N+1-n)=3$ ways of expressing $J_{\bullet}$ as $\Sigma_{k,s}(I_{\bullet})$, each corresponding to a choice of $(s+1)\in\{3,5,6\}$ as a non-minimal element in the sets $J_{\bullet}$, and of $k$ as the unique index for which $(s+1)\in J_k$. We have using \eqref{eq:IfromJ}
 \[
 \begin{aligned}
  s+1=3&:\ k=2\quad\text{ and }\quad I_{\bullet} = (\psi_2(J_1),\psi_2(J_2\setminus\{3\}),\psi_2(J_3)) = (\{1\},\{2,4\},\{3,5\}),\\
  s+1=5&:\ k=2\quad\text{ and }\quad I_{\bullet} = (\psi_4(J_1),\psi_4(J_2\setminus\{3\}),\psi_4(J_3)) = (\{1\},\{2,3\},\{4,5\}),\text{ as seen above},\\
  s+1=6&:\ k=3\quad\text{ and }\quad I_{\bullet} = (\psi_5(J_1),\psi_5(J_2),\psi_5(J_3\setminus\{3\})) = (\{1\},\{2,3,5\},\{4\}).\\
 \end{aligned}
 \]
\end{example}

\begin{proof}[Proof of \eqref{eq:PhiB-div-power}]
 We have
 \[\Phi\circ\Phi^{[B]}(e^{\ul{d}}) = \sum_{\substack{1\leq t\leq s\leq n \\ I_{\bullet}\in\mf{Par}(\ul{d};n+B)}} (-1)^s\sgn(\ul{d};I_{\bullet}) \psi^s(\eta_t(m(\ul{d};I_{\bullet})))\wedge e_{s+1}\]
 Note that $\eta_t(m(\ul{d};I_{\bullet}))=0$ if $t\neq i_k$, and $\psi^s(\eta_t(m(\ul{d};I_{\bullet})))\wedge e_{s+1}$ does not have full support if $t=i_k$ and $|I_k|=d_k$ (see Example~\ref{ex:B=1}). We may then restrict the summation above to terms where $t=i_k$ (for some $k$) and $d_k>|I_k|$.
 
 For such terms we let $J_{\bullet}=\Sigma_{k,s}(I_{\bullet})$, and note that
 \[ \psi^s(\eta_t(m(\ul{d};I_{\bullet})))\wedge e_{s+1} = (-1)^r \cdot m(\ul{d};J_{\bullet}),\]
where $r$ denotes the number of elements of $\b(\ul{d};I_{\bullet})$ which are $\geq s+1$, and therefore can be computed as
\[ r = \sum_{x\in\b(\ul{d};I_{\bullet})} (\psi^s(x)-x).\]
Using the fact that $\b(\ul{d};J_{\bullet}) = \{s+1\} \cup \psi^s(\b(\ul{d};I_{\bullet}))$, it follows that
\[\sum_{x\in\b(\ul{d};J_{\bullet})}(x-1) - \sum_{x\in\b(\ul{d};I_{\bullet})}(x-1) = (s+1-1) + \sum_{x\in\b(\ul{d};I_{\bullet})} (\psi^s(x)-x) = s+r.\]
This shows that
\[ (-1)^s\sgn(\ul{d};I_{\bullet}) = \sgn(\ul{d};J_{\bullet})(-1)^r,\]
hence
\[ (-1)^s\sgn(\ul{d};I_{\bullet}) \psi^s(\eta_t(m(\ul{d};I_{\bullet})))\wedge e_{s+1} = \sgn(\ul{d};J_{\bullet})m(\ul{d};J_{\bullet}).\]
Using Lemma~\ref{lem:multipl-SigmaI}, each of the terms $\sgn(\ul{d};J_{\bullet})m(\ul{d};J_{\bullet})$, with $J_{\bullet}\in\mf{Par}(\ul{d};n+B+1)$ can be expressed in precisely $(B+1)$ ways as $\Sigma_{k,s}(I_\bullet)$, which yields the desired identity $\left(\Phi\circ\Phi^{[B]}\right)(e^{\ul{d}}) = (B+1)\Phi^{[B+1]}(e^{\ul{d}})$.
\end{proof}

Finally, we prove by induction on $B$ that $\Phi^{[B]}$ is a morphism of complexes: when $B=1$ this follows from Example~\ref{ex:B=1} and Corollary~\ref{cor:Phi-on-Fa}. As explained in our outline, our constructions are compatible with base change, so it suffices to consider the case when $\kk=\bb{Z}$. Using induction and the fact that $\Phi$ is a morphism of complexes, it follows from \eqref{eq:PhiB-div-power} that $\Phi\circ\Phi^{[B]} = (B+1)\cdot \Phi^{[B+1]}$ is also a morphism of complexes. Since $(B+1)$ is a non-zero divisor in $\kk=\bb{Z}$, it follows that $\Phi^{[B+1]}$ is a morphism of complexes, proving the induction step.

\subsection{The proof that $\Phi^{[B]}$ is a quasi-isomorphism}
\label{subsec:PhiB-quasi-isom}

In the previous section we have constructed the morphisms of complexes \eqref{eq:def-PhiB-pre}, and our final goal is to prove that these are quasi-isomorphisms for each $a\geq 1$. We prove this by descending induction on $a$, noting that both complexes are $0$ when $a>A$. It then suffices to show that $\Phi^{[B]}$ induces isomorphisms on the associated graded quotients
\begin{equation}\label{eq:PhiB-gr-gen}
  \Phi^{[B]}:\mf{gr}\mc{F}^{a+B}_{\bullet}(\bb{W}_{(d)}) \lra \mf{gr}\mc{F}^a_{\bullet+B}(\bb{W}_{(A,1^B)})
\end{equation}
To describe the map explicitly we note that by \eqref{eq:grFan-weights} the input $e^{\ul{d}}$ in \eqref{eq:def-PhiB} has the form $\ul{d}=(a+B,d_2,\cdots,d_n)$ with $d_2+\cdots+d_n=d-(a+B)=\Delta$, and the non-zero terms in the output have weights of the form $\ul{d'}=(a,\cdots)=(d_1+1-|I_1|,\cdots)$. It follows that $|I_1|=B+1$, which forces $|I_k|=1$ for $k\geq 2$. In particular the partitions $I_{\bullet}$ are uniquely determined by
\[ I_1 = \{1\} \cup \b,\quad\text{ where the set $\b=\b(\ul{d};I_{\bullet})$ has the form }\b = \{2\leq \b_1<\cdots<\b_B\leq n+B\}.\]
Writing $\sgn(\ul{d};\b)$ and $m(\ul{d};\b)$ in \eqref{eq:def-PhiB} we get
\begin{equation}\label{eq:PhiB-ed-on-grF}
\begin{aligned}
& \Phi^{[B]}(e^{\ul{d}}) = \sum_{2\leq\b_1<\cdots<\b_B\leq n+B} \sgn(\ul{d};\b)\cdot m(\ul{d};\b),\\
\text{ where }&\sgn(\ul{d};\b) = (-1)^{\sum_{i=1}^B (\b_i-1)},\text{ and } m(\ul{d};\b) = e_1^{(a)}e_{i_2}^{(d_2)}\cdots e_{i_n}^{(d_n)} \oo e_\b.\\
\end{aligned}
\end{equation}
We will refer to a monomial $m=e^{\a}\oo e_{\b}\in\mf{gr}\mc{F}^a_{n+B}(\bb{W}_{(A,1^B)})$ as \defi{standard} if the corresponding tableau \eqref{eq:tableau-from-mon} is standard (weakly increasing in the first row, and strictly increasing in the first column), and recall that standard monomials form a basis of $\mf{gr}\mc{F}^a_{n+B}(\bb{W}_{(A,1^B)})$. We say that $m$ is \defi{terminal} if $\b_i=n+i$ for all $1\leq i\leq B$, and $\a_i=0$ for $i>n$. This is equivalent to the condition that
\[ m = m(\ul{d};\b)\text{ for }\b=\{n+1,\cdots,n+B\},\ \ul{d} = (a+B,d_2,\cdots,d_n)\text{ with full support}.\]
Note that all the monomials in \eqref{eq:PhiB-ed-on-grF} are standard since $a\geq 1$, and precisely one of them is terminal.

\begin{lemma}\label{lem:XiB-retract-PhiB}
 The map \eqref{eq:PhiB-gr-gen} admits a retraction $\Theta_{[B]}$ defined on standard monomials $m\in\mf{gr}\mc{F}^a_{n+B}(\bb{W}_{(A,1^B)})$ by
\[ 
\Theta_{[B]}(m) = \begin{cases}
 \sgn(\ul{d};\b)\cdot e^{\ul{d}} & \text{if }m=m(\ul{d};\b)\text{ is terminal}, \\
 0 & \text{otherwise}.
\end{cases}
\]
\end{lemma}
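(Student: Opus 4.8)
The plan is to establish the two properties that make $\Theta_{[B]}$ a retraction of the morphism of complexes \eqref{eq:PhiB-gr-gen}: the identity $\Theta_{[B]}\circ\Phi^{[B]}=\id$, and the fact that $\Theta_{[B]}$ is a chain map. The first is immediate from the formula \eqref{eq:PhiB-ed-on-grF} for $\Phi^{[B]}$ on associated graded pieces: every monomial $m(\ul{d};\b)$ appearing there is standard, the monomials attached to distinct $\b$ are distinct basis vectors, and exactly one of them --- the one with $\b=\{n+1,\dots,n+B\}$ --- is terminal. Applying $\Theta_{[B]}$ therefore kills every term of $\Phi^{[B]}(e^{\ul{d}})$ but this one and returns $\sgn(\ul{d};\b)^2\cdot e^{\ul{d}}=e^{\ul{d}}$.

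For the chain-map property I would first record an elementary observation: for $i\geq 2$ and $m$ any standard monomial of $\mf{gr}\mc{F}^a_{\bullet}(\bb{W}_{(A,1^B)})$, the specialization $\psi_i(m)$ is either $0$ --- precisely when $i$ and $i+1$ are consecutive wedge indices, so that $e_i\wedge e_i=0$ --- or again a single standard monomial times a binomial coefficient. Indeed $\psi_i$ fixes the first coordinate, so the corner entry of the tableau \eqref{eq:tableau-from-mon} stays equal to $1$ and its first column remains strictly increasing; no straightening is needed. Since moreover $\psi_1$ raises the first weight above $a$ and thus vanishes on associated graded quotients, the differential of $\mf{gr}\mc{F}^a_{\bullet}(\bb{W}_{(A,1^B)})$ equals $\sum_{i\geq 2}(-1)^{i-1}\psi_i$, picking up a global sign $(-1)^B$ on the shifted complex $\mf{gr}\mc{F}^a_{\bullet+B}(\bb{W}_{(A,1^B)})$ by the convention of Proposition~\ref{prop:Phi-map-Fhat}; the same description (with $\psi_i$ now carrying the comultiplication binomials) holds for the differential $\partial'$ of $\mf{gr}\mc{F}^{a+B}_{\bullet}(\bb{W}_{(d)})$.

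Using this, the next step is to show that the span $V^{\mathrm{nt}}$ of the non-terminal standard monomials is a subcomplex of $\mf{gr}\mc{F}^a_{\bullet}(\bb{W}_{(A,1^B)})$. Suppose $\psi_i(m)=c\cdot m'$ with $c\neq 0$ and $m'$ terminal of homological degree one less; then the weight of $m'$ has its last $B$ coordinates equal to $1$, and the weight of $m$ is obtained by splitting the $i$-th coordinate of the weight of $m'$ into two positive parts, which forces that coordinate to be $\geq 2$, hence it lies among the divided-power positions of $m'$ and $2\leq i\leq n-1$; unwinding the definitions then shows that $m$ has divided support $\{1,\dots,n\}$ and wedge support $\{n+1,\dots,n+B\}$, i.e. $m$ is itself terminal --- contrary to assumption. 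Hence $\Theta_{[B]}\circ\partial$ and $\partial'\circ\Theta_{[B]}$ both vanish on $V^{\mathrm{nt}}$. It then only remains to compare the two sides on a terminal monomial $m=m(\ul{d};\b)$ of degree $n$, where one checks directly that $\psi_i(m)=0$ for $i\geq n+1$, that $\psi_n(m)$ is non-terminal (its $n$-th coordinate keeps a divided-power part, so $\Theta_{[B]}$ kills it), and that $\psi_i(m)=\binom{d_i+d_{i+1}}{d_i}\cdot m(\psi_i(\ul{d});\b')$ with $\b'=\{n,\dots,n+B-1\}$ for $2\leq i\leq n-1$. Consequently both $(-1)^B\,\Theta_{[B]}(\partial m)$ and $\partial'(\Theta_{[B]}(m))=\sgn(\ul{d};\b)\cdot\partial'(e^{\ul{d}})$ reduce to $\sgn(\ul{d};\b)\sum_{i=2}^{n-1}(-1)^{i-1}\binom{d_i+d_{i+1}}{d_i}\,e^{\psi_i(\ul{d})}$ as soon as one verifies $\sgn(\psi_i(\ul{d});\b')=(-1)^B\sgn(\ul{d};\b)$, a one-line computation from \eqref{eq:def-sgnI} that exactly absorbs the shift sign.

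The only step that requires real thought is the subcomplex claim for $V^{\mathrm{nt}}$; the identity $\Theta_{[B]}\circ\Phi^{[B]}=\id$ and the verification on terminal monomials are formal or amount to sign bookkeeping. What makes the subcomplex claim work is exactly the no-straightening observation above: it lets one argue purely at the level of the weights of the monomials involved, so that a terminal monomial can arise from a specialization only by splitting a coordinate of size $\geq 2$, which is incompatible with the output being terminal unless the input already was.
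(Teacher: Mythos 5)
Your proof is correct and follows essentially the same route as the paper's: the retraction identity is read off from \eqref{eq:PhiB-ed-on-grF}, and the chain-map property is checked by splitting into non-terminal and terminal standard monomials, with the identical terminal-case computation (vanishing of $\psi_1$, of $\psi_i$ for $i\geq n+1$, non-terminality of $\psi_n(m)$, and the sign identity $\sgn(\ul{d};\b')=(-1)^B\sgn(\ul{d};\b)$ absorbing the shift). The only cosmetic differences are that you make explicit the ``no straightening needed'' observation for $\psi_i$ with $i\geq 2$ (which the paper uses implicitly) and argue the non-terminal case contrapositively via the weight of a terminal monomial having its last $B$ coordinates equal to $1$, where the paper instead tracks an index $j>n$ with $\a_j>0$ through the specializations; these are equivalent pieces of bookkeeping.
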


\begin{proof} It is clear from \eqref{eq:PhiB-ed-on-grF} that $\Theta_{[B]}$ provides a retract for $\Phi^{[B]}$, provided we verify that $\Theta_{[B]}$ is a morphism of complexes. Similar to Proposition~\ref{prop:Phi-map-Fhat}, our convention is that the differential in $\mf{gr}\mc{F}^a_{\bullet+B}(\bb{W}_{(A,1^B)})$ is given by $\partial_{\bullet+B}=(-1)^B\partial_{\bullet}$. We consider a standard monomial basis element
\[ m = e^{\a}\oo e_{\b}\in\mf{gr}\mc{F}^a_{n+B}(\bb{W}_{(A,1^B)}),\quad\a=(a,\a_2,\cdots,\a_{n+B}),\quad \b = \{2\leq \b_1<\cdots<\b_B\leq n+B\},\]
and denote the weight of $m$ by $\ul{d}'\in\bb{Z}^{n+B}_{>0}$. We need to verify the identity
\begin{equation}\label{eq:XiB-morph-cxs}
 \Theta_{[B]}\left(\sum_{i=1}^{n+B-1} (-1)^{i+B-1}\psi_i(m)\right) = \sum_{i=1}^{n-1} (-1)^{i-1}\psi_i\left(\Theta_{[B]}(m)\right).
\end{equation}

If $m$ is not terminal, then we claim that $\a_j>0$ for some $j>n$: if this were not the case, then the condition $d'_j>0$ for $j>n$ would force each of $j=n+1,\cdots,n+B$ to be in the set $\b$, hence $\b=\{n+1,\cdots,n+B\}$. Letting $d_1=a+B$ and $d_i=\a_i$ for $i=2,\cdots,n$, it would follow that $m=m(\ul{d};\b)$, a contradiction. Fix now an index $j>n$ with $\a_j>0$. Every specialization $m' = \psi_i(m)$ has the form $e^{\a'}\oo e_{\b'}$ where either $\a'_j>0$ or $\a'_{j-1}>0$. Since $j-1>n-1$, it follows that $m'\in \mf{gr}\mc{F}^a_{n-1+B}(\bb{W}_{(A,1^B)})$ is not terminal. It then follows that both sides of \eqref{eq:XiB-morph-cxs} vanish, so the equality holds.

Assume now that $m=m(\ul{d};\b)$ is terminal, so that $\a=(a,d_2,\cdots,d_n,0,\cdots,0)$, $\b=\{n+1,\cdots,n+B\}$. We have $\Theta_{[B]}(m)=\sgn(\ul{d};\b)\cdot e^{\ul{d}}$, and $\psi_1(e^{\ul{d}}) = e_1^{(d_1+d_2)}\cdots$, where $d_1+d_2=a+B+d_2>a+B$, hence $\psi_1(e^{\ul{d}}) = 0$ in $\mf{gr}\mc{F}^{a+B}_{n-1}(\bb{W}_{(d)})$. It follows that the right side of \eqref{eq:XiB-morph-cxs} is given by
\[ \sgn(\ul{d};\b)\cdot\sum_{i=2}^{n-1} (-1)^{i-1}\psi_i\left(e^{\ul{d}}\right).\]
The same argument shows that $\psi_1(m)=0$ in $\mf{gr}\mc{F}^a_{n-1+B}(\bb{W}_{(A,1^B)})$, and moreover we have $\psi_j(m)=0$ for $j=n+1,\cdots,n+B-1$, because $\psi_j(e_j\wedge e_{j+1}) = e_j\wedge e_j = 0$. Since $\psi_n(m) = e^{\a} \oo \psi_n(e_{\b})$ and $\a_n=d_n\neq 0$, it follows that $\psi_n(m)$ is not terminal, and hence $\Theta_{[B]}(\psi_n(m))=0$. We can then rewrite the left side of \eqref{eq:XiB-morph-cxs} as
\[  \Theta_{[B]}\left(\sum_{i=2}^{n-1} (-1)^{i+B-1}\psi_i(m)\right).\]
If we let $\b'=\{n,\cdots,n+B-1\}$ then for $2\leq i\leq n-1$ we have $\b'=\psi_i(\b)$ and 
\[ \psi_i(m) = {d_i+d_{i+1}\choose d_i} \cdot m(\psi_i(\ul{d});\b'),\quad\text{hence}\quad\Theta_{[B]}\left(\psi_i(m)\right) = {d_i+d_{i+1}\choose d_i}\cdot e^{\psi_i(\ul{d})} = \psi_i\left(e^{\ul{d}}\right). \]
The equality \eqref{eq:XiB-morph-cxs} now follows from the fact that $\sgn(\ul{d};\b)=(-1)^B\cdot\sgn(\ul{d};\b')$, concluding our proof.
\end{proof}

It follows from Lemma~\ref{lem:XiB-retract-PhiB} that the map $H_n(\Theta_{[B]})$ induced by $\Theta_{[B]}$ is a left inverse to $H_n(\Phi^{[B]})$ and in particular it is surjective. By Theorem~\ref{thm:Ext-from-spec-complexes}(2), we have
\[ H_n(\mf{gr}\mc{F}^{a+B}_{\bullet}(\bb{W}_{(d)})) = \Ext^{\Delta+1-n}\left(\bw^\Delta,D^{\Delta}\right),\]
\[ H_n(\mf{gr}\mc{F}^{a}_{\bullet+B}(\bb{W}_{(A,1^B)})) = H_{n+B}(\mf{gr}\mc{F}^{a}_{\bullet}(\bb{W}_{(A,1^B)})) = \Ext^{\Delta+1-n}\left(\bw^b,D^{\Delta}\oo\bw^B\right) = \Ext^{\Delta+1-n}\left(\bw^\Delta,D^{\Delta}\right),\]
where the last equality follows from \cite{kulkarni}*{Theorem~1}. This shows that the source and target of $H_n(\Theta_{[B]})$ are abstractly isomorphic, and since they are finitely generated over $\kk$, the surjection $H_n(\Theta_{[B]})$ must be an isomorphism. We get that $H_n(\Phi^{[B]})$ is an isomorphism as well, hence \eqref{eq:def-PhiB-pre} is a quasi-isomorphism.

\begin{proof}[Proof of Theorem~\ref{thm:Ext-hook=Ext-divided}]
If we let $a=A-\Delta$, consider the partition $\mu=(d-\Delta,1^{\Delta}) = (a+B,1^{d-a-B})$, and take $\mc{P}=\bb{W}_{(d)}$ in Theorem~\ref{thm:Ext-from-spec-complexes}(1), then it follows that
\[ \Ext^i\left(\bb{W}_{(d-\Delta,1^{\Delta})},\bb{W}_{(d)} \right) = H_{\Delta+1-i}\left(\mc{F}^{a+B}_{\bullet}(\bb{W}_{(d)})\right).\]
If instead we take $\mu=(A-\Delta,1^{B+\Delta}) = (a,1^{d-a})$ and $\mc{P}=\bb{W}_{(A,1^B)}$ in Theorem~\ref{thm:Ext-from-spec-complexes}(1), then we get
\[ \Ext^i\left(\bb{W}_{(A-\Delta,1^{B+\Delta})},\bb{W}_{(A,1^B)} \right) = H_{B+\Delta+1-i}\left(\mc{F}^{a}_{\bullet}(\bb{W}_{(A,1^B)})\right) = H_{\Delta+1-i}\left(\mc{F}^{a}_{\bullet+B}(\bb{W}_{(A,1^B)})\right).\]
The conclusion of Theorem~\ref{thm:Ext-hook=Ext-divided} now follows from the existence of the quasi-isomorphism \eqref{eq:def-PhiB-pre}.
\end{proof}

\section{Some explicit formulas for $\Ext$ groups}
\label{sec:explicit-Ext}

In this section $\kk$ denotes a field of characteristic $p>0$. Our goal is to provide explicit formulas between extension groups verifying Theorem~\ref{thm:dimExt-hook-2row} and Corollary~\ref{cor:Ext-vs-block}. To that end, we define the polynomials
\[ E_{m,n}(t) = \sum_{j\geq 0}\dim_{\kk}\Ext^j\left(\Sym^{m+n},\bb{S}_{(m,n)}\right) \cdot t^j,\]
and the power series
\[ \mc{E}_k(t,u) = \sum_{n\geq 0} E_{n+k,n}(t)\cdot u^n.\]
The first objective is to prove that $\mc{E}_k(t,u)$ can be rewritten as in \eqref{eq:E-pow-ser}. We recall from \cite{RV}*{(1.5)--(1.6)} the polynomials $H_{a,b}(t)$ and the power series $\mc{N}_b(t,u)$, which can be written as
\begin{equation}\label{eq:defHab-Nb}
 H_{a,b}(t) = \sum_{j\geq 0}\dim_{\kk}H^j_{st}\left(\bb{S}_{(a,1^b)}\right)\cdot t^j,\qquad \mc{N}_b(t,u) = \sum_{a\geq 1}\frac{H_{a,b}(t)}{t^b}\cdot u^{a+b}.
\end{equation}

\begin{lemma}\label{lem:Hab-vs-Emn}
 We have the identifications
 \[ E_{m,n}(t) = \frac{H_{n+1,m-n}(t)}{t^{m-n+1}} \qquad\text{and}\qquad \mc{E}_k(t,u) = \frac{\mc{N}_k(t,u)}{t\cdot u^{k+1}}.\]
\end{lemma}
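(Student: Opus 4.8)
The plan is to first prove the pointwise identity $E_{m,n}(t) = H_{n+1,m-n}(t)/t^{m-n+1}$ in each cohomological degree, and then read off the power series identity by a routine reindexing. Throughout I assume $m\geq n$, which is forced by $(m,n)$ being a partition. Concretely, I will establish
\[ \dim_{\kk}\Ext^j\bigl(\Sym^{m+n},\bb{S}_{(m,n)}\bigr) = \dim_{\kk} H^{m-n+1+j}_{st}\bigl(\bb{S}_{(n+1,1^{m-n})}\bigr)\qquad\text{for all }j. \]
Summing against $t^j$ and substituting $i=m-n+1+j$ turns the left side into $t^{m-n+1}E_{m,n}(t)$ and the right side into $\sum_{i}\dim_{\kk}H^{i}_{st}\bigl(\bb{S}_{(n+1,1^{m-n})}\bigr)\,t^{i}$, where a priori $i$ ranges over $i\geq m-n+1$. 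By Theorem~\ref{thm:short-res-Schur}(3), $H^i_{st}(\bb{S}_{(n+1,1^{m-n})})=0$ for $i$ below the number of parts $m-n+1$ of the hook $(n+1,1^{m-n})$, so extending the range to all $i\geq 0$ changes nothing and the right side is exactly $H_{n+1,m-n}(t)$; this gives the first identity.

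To prove the displayed dimension identity I will compose three known facts. First, the identifications \eqref{eq:ext-weyl-schur=eqs}, together with $(m+n)'=(1^{m+n})$, $(m,n)'=(2^n,1^{m-n})$ and $\bb{S}_{(1^{m+n})}=\bw^{m+n}$, give
\[ \Ext^j\bigl(\Sym^{m+n},\bb{S}_{(m,n)}\bigr) = \Ext^j\bigl(\bb{S}_{(2^n,1^{m-n})},\bw^{m+n}\bigr). \]
Second, the (non-canonical) isomorphisms of \eqref{eq:strange-duality} — whose hypothesis $m\geq n$ holds here — identify the right-hand side with $\Ext^{n-j}\bigl(\bb{S}_{(n+1,1^{m-n})},\bw^{m+1}\bigr)$. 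Third, applying the last statement of Theorem~\ref{thm:stcoh-transpose-duality} in degree $m+1$ rewrites this as $H^{(m+1)-(n-j)}_{st}\bigl(\bb{S}_{(n+1,1^{m-n})}\bigr)^{\vee}$, and $(m+1)-(n-j)=m-n+1+j$. Taking $\kk$-dimensions removes the dual, which is the identity we wanted.

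The power series statement is then a formal consequence. Setting $b=k$ and $a=n+1$ in the definition $\mc{N}_k(t,u) = \sum_{a\geq 1}t^{-k}H_{a,k}(t)\,u^{a+k}$ gives $\mc{N}_k(t,u) = \sum_{n\geq 0}t^{-k}H_{n+1,k}(t)\,u^{n+1+k}$, whereas $\mc{E}_k(t,u) = \sum_{n\geq 0}E_{n+k,n}(t)\,u^n$, and with $m=n+k$ (so $m-n=k$) the first identity gives $E_{n+k,n}(t)=t^{-(k+1)}H_{n+1,k}(t)$. Comparing the two series coefficient by coefficient yields $\mc{N}_k(t,u) = t\,u^{k+1}\,\mc{E}_k(t,u)$, equivalently $\mc{E}_k(t,u)=\mc{N}_k(t,u)/(t\,u^{k+1})$.

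No step here presents a real difficulty: the argument is just a concatenation of \eqref{eq:ext-weyl-schur=eqs}, \eqref{eq:strange-duality}, Theorem~\ref{thm:stcoh-transpose-duality} and Theorem~\ref{thm:short-res-Schur}(3). The only places demanding care are purely clerical — keeping track of which cohomological degree is being shifted, and confirming that the monomial shift by $t^{m-n+1}$ is legitimate, which is precisely where the vanishing range of Theorem~\ref{thm:short-res-Schur}(3) enters.
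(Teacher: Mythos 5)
Your proof is correct and takes essentially the same route as the paper: the chain of identifications \eqref{eq:ext-weyl-schur=eqs}, \eqref{eq:strange-duality}, and Theorem~\ref{thm:stcoh-transpose-duality} yielding $\dim\Ext^j(\Sym^{m+n},\bb{S}_{(m,n)})=\dim H^{m-n+1+j}_{st}(\bb{S}_{(n+1,1^{m-n})})$, followed by the same substitutions $m=n+k$, $a=n+1$, $b=k$ in the series. Your extra appeal to Theorem~\ref{thm:short-res-Schur}(3) for the low-degree vanishing is fine but not strictly needed, since that vanishing already follows from the degreewise isomorphism applied at negative $j$, where the $\Ext$ groups vanish.
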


\begin{proof} We have the series of isomorphisms
\[ 
\begin{aligned}
\Ext^j\left(\Sym^{m+n},\bb{S}_{(m,n)}\right) &\overset{\eqref{eq:ext-weyl-schur=eqs}}{\simeq}\Ext^j\left(\bb{S}_{(2^n,1^{m-n})},\bw^{m+n}\right) \overset{\eqref{eq:strange-duality}}{\simeq}
\Ext^{n-j}\left(\bb{S}_{(n+1,1^{m-n})},\bw^{m+1}\right) \\
&\overset{\text{Theorem~\ref{thm:stcoh-transpose-duality}}}{\simeq} H^{m+1-n+j}_{st}\left(\bb{S}_{(n+1,1^{m-n})}\right)\\
\end{aligned}
\]
from which the identity $H_{n+1,m-n}(t)=t^{m-n+1}\cdot E_{m,n}(t)$ follows. We then get $\mc{N}_k(t,u) = t\cdot u^{k+1}\cdot \mc{E}_k(t,u)$ by setting $m=n+k$, and substituting $a=n+1$ and $b=k$ in \eqref{eq:defHab-Nb}.
\end{proof}

It follows from Lemma~\ref{lem:Hab-vs-Emn} and \cite{RV}*{(1.8)--(1.9)} that $\mc{E}_k(t,u)$ satisfy the following recursive relations: if we write $k=l\cdot p + k_0$ with $l\geq 0$ and $0\leq k_0\leq p-1$, then
\begin{equation}\label{eq:rec-Ektu}
 \mc{E}_k(t,u) = \begin{cases}
 \mc{E}_l(t,u^p) & \text{if }k_0=p-1,\\
 u^{p-1-k_0} \cdot \mc{E}_l(t,u^p) + (1+t\cdot u^{p-1-k_0})\cdot \mc{A}(t,u) & \text{if }0\leq i\leq p-2.
 \end{cases}
\end{equation}
Notice that these recursions uniquely determine $\mc{E}_k(t,u)$: indeed, when $k=0$ we get
\[ \mc{E}_0(t,u) = u^{p-1}\cdot\mc{E}_0(t,u^p) + (1+t\cdot u^{p-1})\cdot\mc{A}(t,u),\]
which determines $\mc{E}_0(t,u)$ uniquely, which in turn determines by induction every $\mc{E}_k(t,u)$. To show that $\mc{E}_k(t,u)$ is given by the formula \eqref{eq:E-pow-ser}, we provisionally set
\begin{equation}\label{eq:def-E'ktu}
\mc{E}'_k(t,u) = \sum_{k_i\neq p-1} \left(u^{\ol{k}(i-1)}+t\cdot u^{\ol{k}(i)}\right) \cdot \mc{A}(t,u^{p^i})
\end{equation}
and show that $\mc{E}'_k(t,u)$ also satisfies the recursive relations \eqref{eq:rec-Ektu}. Notice that if $k=l\cdot p + k_0$ then
\[ k_{i+1}=l_i\quad\text{ and }\quad\ol{k}(i+1) = (p-1-k_0) + p\cdot\ol{l}(i),\quad\text{ for }i\geq 0.\]
It follows that
\begin{equation}\label{eq:truncate-E'ktu}
\begin{aligned}
 \sum_{\substack{i\geq 0 \\ k_{i+1}\neq p-1}} \left(u^{\ol{k}(i)}+t\cdot u^{\ol{k}(i+1)}\right) \cdot \mc{A}(t,u^{p^{i+1}}) &= u^{p-1-k_0}\cdot \sum_{\substack{i\geq 0 \\ l_i\neq p-1}} \left(u^{p\cdot\ol{l}(i-1)}+t\cdot u^{p\cdot\ol{l}(i)}\right) \cdot \mc{A}(t,u^{p\cdot p^i}) \\
 &= u^{p-1-k_0} \cdot \mc{E}'_l(t,u^p).\\
 \end{aligned}
\end{equation}
If $k_0=p-1$ then the left side of \eqref{eq:truncate-E'ktu} agrees with the right side of \eqref{eq:def-E'ktu}, hence $\mc{E}'_k(t,u) = \mc{E}'_l(t,u^p)$. If $k_0\neq p-1$ then the left side of the displayed equation differs from the right side of \eqref{eq:def-E'ktu} by 
\[\left(u^{\ol{k}(-1)}+t\cdot u^{\ol{k}(0)}\right) \cdot \mc{A}(t,u^{p^0}) = (1+t\cdot u^{p-1-k_0})\cdot \mc{A}(t,u),\]
so the recursive relations \eqref{eq:rec-Ektu} hold for $\mc{E}'_k(t,u)$. This means that $\mc{E}_k(t,u)=\mc{E}'_k(t,u)$, as desired.

\begin{proof}[Proof of Theorem~\ref{thm:dimExt-hook-2row}] Since the $\Ext$ groups vanish for $A<a$, we may assume that $A\geq a$, hence $b\geq B$. 

For part (1), we can further assume using invariance under column removal that $B=0$ and therefore $b=A-a$. This shows that $e^j(\ll,\mu)$ equals the coefficient of $t^j$ in $E_{a,b}(t)$. Writing $k=a-b$, this also equals the coefficient of $t^j\cdot u^b=t^j\cdot u^{A-a}$ in $\mc{E}_k(t,u)=\mc{E}_{a-b}(t,u)$, as desired. Part (2) is equivalent to (1) via \eqref{eq:ext-weyl-schur=eqs}.

For part (3) we use Theorem~\ref{thm:introExtComputation} to conclude that $e^j(\ll,\mu)$ equals the coefficient of $t^{d-j}$ in $H_{\Delta+1,d-\Delta-1}(t)$, where $\Delta=A-a$ and $d=a+b=A+B$. By Lemma~\ref{lem:Hab-vs-Emn} with $m=d-1$ and $n=\Delta$, this equals the coefficient of $t^{\Delta-j}$ in $E_{d-1,\Delta}(t)$, which in turn equals the coefficient of $t^{A-a-j}\cdot u^{A-a}$ in $\mc{E}_{a+B-1}(t,u)$, as desired.
\end{proof}

\begin{proof}[Proof of Corollary~\ref{cor:Ext-vs-block}] General block theory implies that if $\ll,\mu$ are in different blocks then $\Ext^{j}(\bb{S}_{\ll},\bb{S}_{\mu})=0$ for all $j$, so we only need to address the converse.

If we let $m=a-B$ and $n=b-B$, then the existence of an index $j$ with $\Ext^{j}(\bb{S}_{\ll},\bb{S}_{\mu})\neq 0$ is equivalent to the condition that $E_{m,n}(t)\neq 0$. By Lemma~\ref{lem:Hab-vs-Emn}, this is further equivalent to the condition that $H_{n+1,m-n}(t)\neq 0$. We write
\[ m-n = -1 + c\cdot q,\quad\text{ with }q=p^r,\ p\nmid c,\]
and conclude using \cite{RV}*{(1.10)} that 
\begin{equation}\label{eq:Hab-nonzero}
 H_{n+1,m-n}(t)\neq 0 \quad\Longleftrightarrow\quad pq | n \quad\text{or}\quad pq | m+1.
\end{equation}

Notice that $q$ is the largest power of $p$ which divides $m-n+1=a-b+1$. If $\ll=(A,B)$ and $\mu=(a,b)$ are in the same block, then \cite{donkin-blocks} implies that $q$ is also the largest power of $p$ which divides $A-B+1$. In addition, one of the following conditions must hold:
\begin{enumerate}
 \item $\ll_i\equiv\mu_i\ (\text{mod }pq)$ for $i=1,2$, or
 \item $\ll_1-1\equiv \mu_2-2\ (\text{mod }pq)$ and $\ll_2-2\equiv \mu_1-1\ (\text{mod }pq)$.
\end{enumerate}
In case (1) we get $pq|A-a=n$, and in case (2) we get $pq|a-B+1 = m+1$, so the desired non-vanishing follows from \eqref{eq:Hab-nonzero}. 
\end{proof}

\section*{Acknowledgements}
Experiments with Macaulay2 \cite{GS} and GAP \cite{doty-gap} have provided many valuable insights. The authors would like to thank Mark Behrens, Ben Briggs, Stephen Donkin, Karthik Ganapathy, Srikanth Iyengar, Henning Krause and Antoine Touz\'e for helpful communications regarding various aspects of this project.  Raicu acknowledges the support of the National Science Foundation Grant DMS-2302341. VandeBogert acknowledges the support of the National Science Foundation Grant DMS-2202871.

\bibliography{biblio}

\end{document}